\newenvironment{cproof}{\begin{proof}[Proof of the 
		claim]}{\end{proof}}
\newtheoremstyle{capitale}    
    {\topsep}                    
    {\topsep}                    
    {\itshape}                   
    {}                           
    {\scshape}                   
    {. ---}                          
    {.5em}                       
    {} 
\newtheoremstyle{remark}    
    {\topsep}                    
    {\topsep}                    
    {}                   
    {}                           
    {\scshape}                   
    {. ---}                          
    {.5em}                       
    {} 
\theoremstyle{capitale}
\newtheorem{theorem}{Theorem}[section]
\newtheorem{lemma}[theorem]{Lemma}
\newtheorem*{claim*}{Claim}
\newtheorem*{lemme*}{Lemme}
\newtheorem{corollary}[theorem]{Corollary}
\newtheorem{definition}[theorem]{Definition}
\newtheorem*{definition*}{Definition}
\newtheorem{fact}{Fact}
\theoremstyle{remark}
\newtheorem{remark}[theorem]{Remark}
\newtheorem{example}[theorem]{Example}
\newcommand{\BB}{\mathcal{B}}
\newcommand{\FF}{\mathcal{F}}
\newcommand{\HH}{\mathcal{H}}
\newcommand{\KK}{\mathcal{K}}
\newcommand{\UU}{\mathcal{U}}
\newcommand{\Isom}{\mathrm{Isom}}
\newcommand{\Ind}{\mathrm{Ind}}
\newcommand{\EE}{\mathcal{E}}
\renewcommand{\L}{\mathrm{L}}
\newcommand{\N}{\mathbb{N}}
\newcommand{\Z}{\mathbb{Z}}
\newcommand{\Q}{\mathbb{Q}}
\newcommand{\U}{\mathbb{U}}
\newcommand{\R}{\mathbb{R}}
\newcommand{\C}{\mathbb{C}}
\newcommand{\Sym}{\mathrm{Sym}}
\newcommand{\inv}{^{-1}}
\newcommand{\defin}[1]{\textbf{\textit{#1}}}
\newcommand{\Aut}{\mathrm{Aut}}
\newcommand{\Sub}{\mathrm{Sub}}
\renewcommand{\P}{\mathbb{P}}
\newcommand\Nm[2]{{#1}_{\{#2\}}}
\newcommand\conv[2]{\operatorname{Conv}_{#1}(#2)}
\newcommand{\indep}{%
\mathrel{\reflectbox{\rotatebox[origin=c]{90}{$\models$}}}}
\newcommand{\supp}{\mathrm{supp}}
\newcommand{\Addresses}{{
			\bigskip
			\footnotesize
			\noindent R.~Barritault, \textsc{Université Claude Bernard Lyon 1, Institut Camille Jordan, \\Lyon, France}\par\nopagebreak\noindent
			\textit{E-mail address: }\texttt{barritault@math.univ-lyon1.fr}

                \medskip

			\noindent C.~Jahel, \textsc{Institut fur Algebra, Technische Universität Dresden, \\Dresden, Germany}\par\nopagebreak\noindent
			\textit{E-mail address: }\texttt{colin.jahel@tu-dresden.de}
			
			\medskip
			
			\noindent M.~Joseph, \textsc{Université Paris-Saclay, CNRS, Laboratoire de mathématiques d’Orsay, \\Orsay, France}\par\nopagebreak\noindent
			\textit{E-mail address: }\texttt{matthieu.joseph@universite-paris-saclay.fr}
	
			\medskip

	}}
\title{Unitary representations of the isometry groups of Urysohn spaces}
\author{Rémi BARRITAULT, Colin JAHEL, Matthieu JOSEPH}
\date{}
\begin{document}
\maketitle

\begin{abstract}
    We obtain a complete classification of the continuous unitary representations of the isometry group of the rational Urysohn space $\Q\U$. As a consequence, we show that $\Isom(\Q\U)$ has property (T). We also derive several ergodic theoretic consequences from this classification: $(i)$ every probability measure-preserving action of $\Isom(\Q\U)$ is either essentially free or essentially transitive, $(ii)$ every ergodic $\Isom(\Q\U)$-invariant probability measure on $[0,1]^{\Q\U}$ is a product measure. We obtain the same results for isometry groups of variations of $\Q\U$, such as the rational Urysohn sphere $\Q\U_1$, the integral Urysohn space $\Z\U$, etc.
\end{abstract}

\noindent
\textbf{MSC:}  Primary: 22A25, 22F50. Secondary: 37A15, 60G09.\\
\textbf{Keywords: }Rational Urysohn space, unitary representations, type I, property (T), measure-preserving actions, de Finetti theorem.

\tableofcontents

\newpage
\section{Introduction}

Let $\Q\U$ be the rational Urysohn space, which is the unique countable metric space satisfying the following two conditions:

\begin{enumerate}
    \item (\emph{universality}) every countable metric space with rational distances embeds isometrically into $\Q\U$,
    \item (\emph{ultrahomogeneity}) every isometry between finite subspaces of $\Q\U$ extends to an isometry of $\Q\U$.
\end{enumerate}

The group $\Isom(\Q\U)$ of all isometries of $\Q\U$ onto itself is a Polish group when equipped with the topology of pointwise convergence (with $\Q\U$ viewed as a discrete topological space). It has been the object of intensive study over the last three decades \cite{CameronVershik}, \cite{Hubicka}, \cite{EGLMM}, \cite{FLMMM}, \cite{Melleray}, \cite{TentZiegler}, \cite{TentZieglerBounded}, \emph{etc}.

A family of similar metric spaces can be defined as follows. A distance set is either a countable additive subsemigroup of the positive reals that contains $0$, or the intersection $\mathcal{S}\cap [0,r]$ of such a semigroup $\mathcal{S}$ and a bounded interval with $r\in\mathcal{S}$. To each  distance set corresponds a Urysohn $\Delta$-metric space $\U_\Delta$: it is the unique countable metric space which is ultrahomogeneous and universal among countable metric spaces with distances in $\Delta$. Our main result is a classification of the unitary representations of the Polish groups $\Isom(\U_\Delta)$.

\begin{theorem}[see Theorem \ref{thm.classification.unirep.Isom(UDelta)}]\label{thmintro.unirep.Isom(UQ)}
    Let $\Delta\subseteq\R_+$ be a countable distance set and $G=\Isom(\U_\Delta)$. Every continuous irreducible unitary representation of $G$ is induced from an irreducible representation of the setwise stabilizer $G_{\{A\}}$ for some finite  $A\subseteq\U_\Delta$, which is trivial on the pointwise stabilizer $G_A$. Moreover, every continuous unitary representation of $G$ is a direct sum of irreducible ones. 
\end{theorem}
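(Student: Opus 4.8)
The plan is to combine the general machinery for unitary representations of non-archimedean (oligomorphic) Polish groups with the canonical metric amalgamation of $\U_\Delta$, which supplies a stationary independence relation. Write $G=\Isom(\U_\Delta)$ and, for finite $A\subseteq\U_\Delta$, let $G_A$ and $G_{\{A\}}$ be the pointwise and setwise stabilizers; the family $\{G_A\}$ is a neighborhood basis of open subgroups at the identity, so $G$ is non-archimedean and, being oligomorphic, Roelcke precompact. The first reduction is continuity: a unitary representation $\pi$ on $H$ is continuous iff the space of smooth vectors $\bigcup_A H^{G_A}$ is dense, so it suffices to analyze the pieces $H^{G_A}$ and how $G$ moves them. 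By ultrahomogeneity $G_{\{A\}}/G_A\cong\Aut(A)$ is finite.

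First I would set up the Hecke-algebra correspondence. Since $G$ is oligomorphic, for each finite $A$ the set of double cosets $G_A\backslash G/G_A$ is finite (it injects into the set of $G$-orbits on pairs of isometric copies of $A$), so the Hecke algebra $\mathcal{H}_A=\C[G_A\backslash G/G_A]$ under convolution is a finite-dimensional $*$-algebra, with involution induced by $g\mapsto g\inv$. The assignment $\pi\mapsto H^{G_A}$ is a non-degenerate $*$-representation of $\mathcal{H}_A$, and a representation generated by its $G_A$-fixed vectors is recovered from this module. Since the unitary representations furnish faithful Hilbert-space $*$-representations, $\mathcal{H}_A$ is a finite-dimensional $C^*$-algebra, hence a finite direct sum of matrix algebras; consequently every such $\mathcal{H}_A$-module splits as an orthogonal direct sum of finite-dimensional irreducibles, giving the decomposition of $H^{G_A}$ one level at a time.

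The heart of the argument, and the main obstacle, is to glue these level-by-level decompositions into a global decomposition of $H$ into irreducible $G$-representations, and to rule out continuous spectrum. Here I would use the stationary independence relation $\indep$ coming from the canonical metric amalgamation (assigning the largest admissible distances): given $\xi\in H^{G_A}$ and $\eta\in H^{G_B}$ with $A\indep_{\emptyset}B$ realized inside $\U_\Delta$, the relation controls the matrix coefficients $g\mapsto\langle\pi(g)\xi,\eta\rangle$ and yields an asymptotic multiplicativity that forces the von Neumann algebra $\pi(G)''$ to be atomic and the representation category of $G$ to be semisimple, with no nonsplit extensions between the finite-dimensional constituents found above. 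Concretely, I would show that the projections onto the isotypic pieces of $H^{G_A}$ are compatible as $A$ grows and assemble, through the directed system of the $\mathcal{H}_A$, into $G$-equivariant projections decomposing $H$; the independence relation is exactly what guarantees this compatibility and the absence of a diffuse part. I expect this to be the technically demanding step, precisely where the specific geometry of $\U_\Delta$ — rather than mere Roelcke precompactness — is indispensable.

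Finally, for the explicit form of the irreducibles, take an irreducible $\pi$; by continuity some $H^{G_A}\neq 0$, and I choose $A$ of minimal size with this property. The finite group $G_{\{A\}}/G_A$ acts on $V:=H^{G_A}$, and minimality together with irreducibility of $\pi$ forces $V$ to be irreducible under this finite group; let $\sigma$ be the corresponding irreducible representation of $G_{\{A\}}$, which is trivial on $G_A$. A Frobenius-reciprocity and Mackey computation over the inclusion $G_{\{A\}}\le G$ — again using the double-coset and independence analysis to evaluate the relevant intertwiners — then identifies $\pi$ with $\Ind_{G_{\{A\}}}^{G}\sigma$, giving exactly the asserted description. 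Combined with the global semisimplicity established in the previous step, this proves both halves of the theorem.
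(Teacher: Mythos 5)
There is a genuine gap, and it sits at the foundation of your argument. You assert that $G=\Isom(\U_\Delta)$ is oligomorphic and Roelcke precompact and that the double coset spaces $G_A\backslash G/G_A$ are finite. This is true only when $\Delta$ is \emph{finite} (see Lemma \ref{lem.coarse.geometry}), and in that case the theorem is already known (Tsankov). For the cases the statement is actually about --- infinite countable $\Delta$ such as $\Q_+$ --- the group is not oligomorphic and the Hecke algebras are infinite-dimensional: already for $A=\{a\}$ the double cosets $G_a\backslash G/G_a$ are parametrized by the possible values of $d(a,g(a))$, which range over all of $\Delta$. So the step ``$\mathcal{H}_A$ is a finite direct sum of matrix algebras, hence $H^{G_A}$ splits into finite-dimensional irreducibles'' collapses, and with it the level-by-level decomposition you want to glue. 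Your third paragraph, which you correctly flag as the technically demanding step, is also only a sketch: ``the independence relation controls matrix coefficients and forces $\pi(G)''$ to be atomic'' is precisely the statement that needs proof, and no mechanism is given for why a diffuse part cannot occur.

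The paper's route avoids both problems. It isolates an abstract property of representations of closed permutation groups, \emph{dissociation} (conditional orthogonality $\HH_A\bot_{\HH_{A\cap B}}\HH_B$ of the fixed-point subspaces), and shows purely formally that for groups with no algebraicity every dissociated representation decomposes as a direct sum of $\Ind_{G_{\{A\}}}^G(\sigma)$ with $\sigma$ factoring through the finite group $G_{\{A\}}/G_A$ (Theorem \ref{thm.classification.dissociated}); the minimal-$A$ and Frobenius-type analysis in your last paragraph does survive in this form. The point where the geometry of $\U_\Delta$ enters is not a direct matrix-coefficient estimate but an approximation: via the Kat\v{e}tov construction, $\Isom(\U_\Delta)$ is approximated by the oligomorphic groups $\Isom(\U_{\Delta_n})$ for finite $\Delta_n\nearrow\Delta$, whose representations are all dissociated by the known oligomorphic theory, and dissociation passes to the limit by a reverse-martingale argument for the projections $p_A^n\to p_A$ (Theorem \ref{thm.dissociation.through.approximation}). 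If you want to salvage your outline, you would need to replace the finite-dimensional Hecke algebra analysis by such a limiting argument, or else give a genuine proof of the atomicity claim directly from the stationary independence relation.
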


In particular, every irreducible representation of $G=\Isom(\U_\Delta)$ is a subrepresentation of a quasi-regular representation $\ell^2(G/V)$ for some open subgroup $V\leqslant G$ which in turn is a subrepresentation of $\ell^2(\U_\Delta^n)$ for some $n\in \N$.

As a direct corollary, we obtain the following result. We refer to \cite[Sec.~6]{BdlH} for an introduction to topological groups of type I.

\begin{corollary}
    The group $\Isom(\U_\Delta)$ is of type I for every distance set $\Delta\subseteq\R_+$.
\end{corollary}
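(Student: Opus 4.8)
The plan is to deduce type I directly from the semisimplicity statement in Theorem \ref{thmintro.unirep.Isom(UQ)}, namely that every continuous unitary representation of $G=\Isom(\U_\Delta)$ decomposes as a direct sum of irreducible subrepresentations. Recall that, $G$ being Polish and hence second countable, it is of type I in the sense of \cite[Sec.~6]{BdlH} precisely when the von Neumann algebra $\pi(G)''$ is of type I for every continuous unitary representation $\pi$. So it suffices to show that this von Neumann algebra is of type I whenever $\pi$ is a direct sum of irreducibles.

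First I would fix a continuous unitary representation $\pi$ on a Hilbert space $H$ and, using the theorem, write $H=\bigoplus_{j\in J}H_j$ with each restriction $\pi|_{H_j}$ irreducible. Grouping the summands by unitary equivalence class, I would form, for each class $[\sigma]$ of irreducibles occurring in $\pi$, the isotypic subspace $H_{[\sigma]}$ spanned by all the $H_j$ with $\pi|_{H_j}\simeq\sigma$. The point is that $H_{[\sigma]}$ is canonical: it is the range of the natural map $\mathrm{Hom}_G(H_\sigma,H)\otimes H_\sigma\to H$, so it is invariant under both $\pi(G)$ and its commutant $\pi(G)'$, and $H=\bigoplus_{[\sigma]}H_{[\sigma]}$ is a decomposition into central summands of $\pi(G)''$.

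Next I would identify $H_{[\sigma]}\cong H_\sigma\otimes M_\sigma$, where $M_\sigma=\mathrm{Hom}_G(H_\sigma,H)$ is the multiplicity space, under which $\pi$ acts as $\sigma\otimes\mathrm{id}$. Consequently the restriction of $\pi(G)''$ to $H_{[\sigma]}$ equals $B(H_\sigma)\otimes\mathbb{C}\cong B(H_\sigma)$, a type I factor. Since the class of type I von Neumann algebras is closed under direct sums, $\pi(G)''=\bigoplus_{[\sigma]}B(H_\sigma)$ is of type I, which is exactly what we need.

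I do not expect a serious obstacle here: once the global direct-sum decomposition into irreducibles is granted by Theorem \ref{thmintro.unirep.Isom(UQ)}, the conclusion is a standard piece of von Neumann algebra theory. The only points requiring a little care are checking that the isotypic components are genuinely invariant under the commutant (so that the decomposition is central rather than merely $G$-invariant), and noting that no continuous direct-integral part can appear, since the decomposition is an honest discrete direct sum; this is precisely what rules out type II or III summands and forces each block to be the full algebra $B(H_\sigma)$.
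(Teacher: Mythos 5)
Your argument is correct and is exactly the standard reasoning the paper relies on: it states the corollary as a direct consequence of Theorem \ref{thmintro.unirep.Isom(UQ)} without writing out a proof, the point being precisely that a group all of whose continuous unitary representations decompose as direct sums of irreducibles has $\pi(G)''$ equal to a direct sum of type I factors $B(H_\sigma)$ over the isotypic components. Your write-up supplies the routine von Neumann algebra details (invariance of isotypic components under the commutant via Schur's lemma, and $\sigma(G)''=B(H_\sigma)$ for $\sigma$ irreducible) and is complete.
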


Let us mention that a classification of the unitary representations for the isometry group of \emph{ultrametric} variants of $\Q\U$ has recently been achieved by Neretin \cite{Neretin}. Even though the results obtained there have the same flavor, the techniques of proofs are different and Neretin's results do not overlap with ours. Using our description of the unitary representations of $\Isom(\U_\Delta)$ together with some techniques developed by Bekka \cite{Bekka} and Evans-Tsankov \cite{EvansTsankov}, we prove in Section \ref{sec.property(T)} that $\Isom(\U_\Delta)$ has property (T). 

\begin{theorem}\label{thmintro.property(T)}
For every distance set $\Delta\subseteq\R_+$, the Polish group
$\Isom(\U_\Delta)$ has property (T). More precisely, for every integer $m\geq 1$, one can find a Kazhdan set $Q_m$ of cardinality $m$ such that
\[\sqrt{2-\frac{2\sqrt{2m-1}}{m}}\] 
is the optimal Kazhdan constant for $Q_m$.
\end{theorem}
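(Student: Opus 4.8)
The plan is to deduce both property (T) and the explicit constant from the classification (Theorem \ref{thmintro.unirep.Isom(UQ)}) by reducing everything to a temperedness statement for a single well-chosen free subgroup, and then invoking Kesten's spectral radius computation for free groups. Write $G=\Isom(\U_\Delta)$. By the classification, every continuous unitary representation $\pi$ of $G$ with no nonzero invariant vector is a direct sum of nontrivial irreducibles, each of which is a subrepresentation of some $\ell^2(\U_\Delta^n)$. To prove property (T) it therefore suffices to produce a finite set $Q$ and a constant $\kappa>0$ such that $\max_{g\in Q}\lVert\pi(g)\xi-\xi\rVert\geq\kappa\lVert\xi\rVert$ holds on each of these building blocks, since the direct-sum structure then propagates the inequality to all of $\pi$.

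Fix $m\geq 1$. The first step is to build $m$ isometries $s_1,\dots,s_m\in G$ generating a free group $\Gamma\cong F_m$ which acts on $\U_\Delta$ with no finite orbits; concretely, I would construct the $s_i$ by a back-and-forth amalgamation procedure using the universality and ultrahomogeneity of $\U_\Delta$, arranging at each stage that the orbit of every configuration under any reduced word is infinite and that no nontrivial reduced word stabilizes a finite set setwise. Granting such a $\Gamma$, set $Q_m=\{s_1,\dots,s_m\}$ and let $\mu=\frac{1}{2m}\sum_{i=1}^m(\delta_{s_i}+\delta_{s_i\inv})$ be the symmetric uniform probability measure on the generators. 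For a unit vector $\xi$ in a representation without invariant vectors, the elementary identity $\frac{1}{m}\sum_i\lVert\pi(s_i)\xi-\xi\rVert^2=2-2\,\mathrm{Re}\langle\pi(\mu)\xi,\xi\rangle$, together with self-adjointness of $\pi(\mu)$, gives $\max_i\lVert\pi(s_i)\xi-\xi\rVert^2\geq 2-2\lVert\pi(\mu)\rVert$. Thus everything reduces to bounding the operator norm $\lVert\pi(\mu)\rVert$ off the invariant vectors.

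The key point is that $\pi|_\Gamma$ is tempered, i.e.\ weakly contained in the regular representation $\lambda_\Gamma$ of $F_m$. By the classification it is enough to verify this for $\pi=\ell^2(\U_\Delta^n)$ with the constants and the lower-order invariant part removed: since $\Gamma$ is built to act freely on the relevant $n$-tuples, the corresponding Koopman representation splits over the $\Gamma$-orbits as a multiple of $\lambda_\Gamma$, which is manifestly tempered, while the tuples with repeated or $\Gamma$-periodic entries form the lower-order part absorbed into the invariant vectors and are handled by induction on $n$. Temperedness yields $\lVert\pi(\mu)\rVert\leq\lVert\lambda_\Gamma(\mu)\rVert$, and Kesten's theorem computes the spectral radius of the simple random walk on $F_m$ as $\lVert\lambda_\Gamma(\mu)\rVert=\frac{\sqrt{2m-1}}{m}$. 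Combining the two displays gives $\max_i\lVert\pi(s_i)\xi-\xi\rVert\geq\sqrt{2-\frac{2\sqrt{2m-1}}{m}}$, which proves property (T) with $Q_m$ as a Kazhdan set of cardinality $m$. Optimality is the matching reverse estimate: a quasi-regular representation on which $\Gamma$ acts freely contains $\lambda_\Gamma$, and the Kesten bound $\frac{\sqrt{2m-1}}{m}$ is attained along a sequence of unit vectors approaching the top of the spectrum of $\lambda_\Gamma(\mu)$, so the very same sequence shows the constant cannot be improved.

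The main obstacle is the construction of $\Gamma$ and, above all, the verification of temperedness in full generality. Guaranteeing a genuinely free action with trivial stabilizers on every configuration space $\U_\Delta^n$, uniformly in $n$ and for an \emph{arbitrary} distance set $\Delta$ where the available distances may be severely constrained, is delicate; and it is precisely this acyclicity, the orbits being copies of the tree underlying $F_m$ rather than graphs carrying short cycles, that forces the sharp Ramanujan value $\frac{\sqrt{2m-1}}{m}$ rather than a weaker bound. Controlling the non-free part of the action and confirming that it contributes only to the invariant vectors, so that the temperedness of the Koopman representation is exact, is the step I expect to require the most care, and is where the techniques of Bekka \cite{Bekka} and Evans--Tsankov \cite{EvansTsankov} enter.
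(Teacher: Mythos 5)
Your overall architecture is the same as the paper's: use the classification to write every representation without invariant vectors as a direct sum of irreducibles, each contained in some $\ell^2(G/V)$ and hence in some $\ell^2(\U_\Delta^n)$; produce a free subgroup $F_m$ acting freely so that the restriction to $F_m$ of each such block is a multiple of $\lambda_{F_m}$; and conclude via Kesten's computation of the spectral radius $\sqrt{2m-1}/m$. The paper packages the spectral step by quoting a criterion extracted from Bekka and Evans--Tsankov, whereas you re-derive it; your averaging inequality, the temperedness argument, and the optimality discussion are all correct as stated. Two of your stated worries are non-issues: freeness of the $\Gamma$-action on $\U_\Delta$ automatically gives freeness (hence trivial stabilizers) on every $\U_\Delta^n$ for $n\geq 1$, since an isometry fixing a tuple fixes each coordinate, so there is no uniformity-in-$n$ problem and no ``$\Gamma$-periodic'' residual part to absorb; and by routing everything through $\ell^2(\U_\Delta^n)$ you legitimately sidestep the paper's need (in its Corollary \ref{cor.torsion.free.acting.freely}) to handle setwise stabilizers via torsion-freeness, since norm bounds on $\pi(\mu)$ pass to subrepresentations.

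The genuine gap is the one you flag yourself: the existence, for an \emph{arbitrary} distance set $\Delta$, of $m$ isometries generating a copy of $F_m$ whose action on $\U_\Delta$ is free. Your back-and-forth sketch is not carried out, and it is not routine: while extending finitely many partial isometries one must simultaneously kill all relations \emph{and} all fixed points of all nontrivial reduced words, with distances constrained to lie in $\Delta$, and the bookkeeping needed to guarantee that no fixed point is ever created at a later stage is precisely where such arguments fail if done naively. The paper fills this gap differently (Lemma \ref{lem.free.group.freely}): following \cite{FLMMM}, one starts from the left translation action of $F_m$ on itself equipped with a rescaled discrete metric (a $\Delta$-metric space), and iterates an \emph{adapted} Kat\v etov construction that preserves the property that every nontrivial element moves every point by at least a fixed positive distance; the union of the tower is $\U_\Delta$ and the action remains free. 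Without this input (or an equally rigorous substitute), your proof establishes only the conditional statement ``if such an $F_m$ exists then $(Q_m,\sqrt{2-2\sqrt{2m-1}/m})$ is an optimal Kazhdan pair,'' which is exactly the content of the Bekka--Evans--Tsankov criterion the paper quotes.
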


The group $\Isom(\U_\Delta)$ being topologically simple (Theorem 7.3 of \cite{EGLMM}), our work provides new examples of topologically simple Polish groups which have property (T) and are 
\begin{itemize}
    \item locally bounded but not locally Roelcke precompact, e.g.\ $\Isom(\Q\U)$,
    \item locally Roelcke precompact but not Roelcke precompact, e.g.\ $\Isom(\Z\U)$,
    \item coarsely bounded but not Roelcke precompact, e.g.\ $\Isom(\Q\U_1)$. 
\end{itemize} 
We refer to Lemma \ref{lem.coarse.geometry} for large scale geometric properties of the groups $\Isom(\U_\Delta)$. Let us mention that if $\Delta$ is a finite distance set, the group $\Isom(\U_\Delta)$ is oligomorphic. In that case, Tsankov proved in \cite{Tsankov} that $\Isom(\U_\Delta)$ has property (T). He derived this result as a consequence of the classification of the continuous unitary representations for oligomorphic groups. Generalizing Tsankov's results, Property (T) for the very general class of Roelcke precompact groups was later established by Ibarlucía \cite{Ibarlucia}.\\

In Section \ref{sec.Urysohnbig}, we settle some results around representations of the isometry groups of the Urysohn space $\U$ that are probably well-known but written nowhere in the literature. Those results answer several questions asked by Pestov in \cite{Pestov}. More precisely, we use our classification of the unitary representations of $\Isom(\Q\U)$ to prove that $\Isom(\U)$ admits no non-trivial unitary representation. In fact, much more is true: $\Isom(\U)$ has no non-trivial representation by isometry on reflexive Banach spaces (Corollary \ref{cor.Isom(U)}). In particular, $\Isom(\U)$ has property (T).\\

Let us explain the argument that we develop to prove Theorem \ref{thmintro.unirep.Isom(UQ)}. Let $\Omega$ be a countably infinite set and let $\Sym(\Omega)$ be the group of \emph{all} permutations of $\Omega$. It is endowed with the topology induced from the product one on $\Omega^\Omega$ ($\Omega$ is equipped with the discrete topology). A closed  subgroup $G$ of $\Sym(\Omega)$ is called a \defin{closed permutation group}. A neighborhood basis of the identity consisting of clopen subgroups is given by \defin{pointwise stabilizers} of finite subsets, which are defined for $A\subseteq\Omega$ finite by $G_A\coloneqq\{g\in G\colon \forall a\in A, g(a)=a\}$. The \defin{setwise stabilizer} of a finite set $A\subseteq\Omega$ is the subgroup $G_{\{A\}}\coloneqq\{g\in G\colon \forall g\in G, g(A)=A\}$. A closed subgroup $G\leq \Sym(\Omega)$ has \defin{no algebraicity} if for every finite subset $A\subseteq\Omega$, the action $G_A\curvearrowright\Omega\setminus A$ has no finite orbit. Fix a closed permutation group $G\leq \Sym(\Omega)$ and a continuous unitary representation $\pi \colon G\to\UU(\HH)$. Given a finite subset $A \subseteq \Omega$, we let $\HH_A\subseteq\HH$ be the subspace of $G_A$-invariant vectors. The representation $\pi$ is \defin{dissociated} if for all finite subsets $A,B\subseteq\Omega$, the subspaces $\HH_A$ and $\HH_B$ are orthogonal conditionally on $\HH_{A\cap B}$. Given three subspaces $\HH_1,\HH_2,\HH_3$ of a Hilbert space $\HH$ satisfying $\HH_2\subseteq\HH_1\cap\HH_3$, we say that $\HH_1$ and $\HH_3$ are orthogonal conditionally on $\HH_2$ if the subspaces $\HH_1\cap(\HH_2)^\bot$ and $\HH_3\cap(\HH_2)^\bot$ are orthogonal. For closed permutation groups with no algebraicity, we classify dissociated unitary representations.

\begin{theorem}[see Theorem \ref{thm.classification.dissociated}]\label{thmintro.classification.dissociated}
    Let $G\leq\Sym(\Omega)$ be a closed subgroup with no algebraicity. Then every dissociated unitary representation $\pi\colon G\to\UU(\HH)$ is isomorphic to a direct sum $\bigoplus_{i\in I}\pi_i$ of irreducible representations, where for every $i\in I$, there exists a finite subset $A_i\subseteq\Omega$ such that $\pi_i$ is induced from an irreducible representation $\sigma_i$ of the setwise stabilizer $G_{\{A_i\}}$ which factors through the finite group $G_{\{A_i\}}/G_{A_i}$. 
\end{theorem}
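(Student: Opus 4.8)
The plan is to translate the dissociation hypothesis into a commutation relation between the projections onto the invariant subspaces, and then run an ANOVA/Möbius-type decomposition. Write $P_A$ for the orthogonal projection onto $\HH_A$. Since $A\subseteq B$ gives $G_B\leqslant G_A$ and hence $\HH_A\subseteq\HH_B$, the family $(\HH_A)$ is directed, and by continuity of $\pi$ the union $\bigcup_A\HH_A$ is dense: given $v$ and $\varepsilon>0$, there is a finite $A$ with $\|\pi(g)v-v\|<\varepsilon$ for all $g\in G_A$, and the minimal-norm element of the closed convex hull of $\{\pi(g)v:g\in G_A\}$ is a $G_A$-invariant vector within $\varepsilon$ of $v$. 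A direct computation with the orthogonal decomposition $\HH_A=\HH_{A\cap B}\oplus(\HH_A\cap\HH_{A\cap B}^\bot)$ shows that $\HH_A$ and $\HH_B$ are orthogonal conditionally on $\HH_{A\cap B}$ if and only if $P_AP_B=P_{A\cap B}$; thus dissociation is exactly the statement that $P_AP_B=P_{A\cap B}$ for all finite $A,B$, and in particular the $P_A$ pairwise commute.

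First I would build the interaction subspaces. Because the $P_B$ commute and satisfy $P_BP_C=P_{B\cap C}$, the operators $R_A\coloneqq\sum_{B\subseteq A}(-1)^{|A\setminus B|}P_B$ are orthogonal projections: indeed $R_A=P_A\prod_{a\in A}(1-P_{A\setminus\{a\}})$, so $R_A\leqslant P_A$ and $R_AP_C=0$ whenever $A\not\subseteq C$. Möbius inversion gives $P_A=\sum_{B\subseteq A}R_B$, and the relation $R_AP_C=0$ for $A\not\subseteq C$ yields $R_AR_{A'}=0$ for $A\neq A'$. Setting $W_A\coloneqq R_A\HH$, the subspaces $W_A$ are pairwise orthogonal, $\HH_A=\bigoplus_{B\subseteq A}W_B$, and by the density above $\HH=\bigoplus_{A}W_A$. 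Since $gG_Ag\inv=G_{gA}$ we get $\pi(g)P_A\pi(g)^*=P_{gA}$, hence $\pi(g)R_A\pi(g)^*=R_{gA}$ and $\pi(g)W_A=W_{gA}$: the representation permutes the $W_A$ along the action of $G$ on finite subsets of $\Omega$.

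Next I would organize the decomposition by orbits. For each orbit $\mathcal{O}$ of finite subsets, with representative $A_0$, the invariant subspace $\bigoplus_{A\in\mathcal{O}}W_A$ is isomorphic to $\Ind_{G_{\{A_0\}}}^G W_{A_0}$, where $W_{A_0}$ is a unitary representation of the open subgroup $G_{\{A_0\}}$. As $W_{A_0}\subseteq\HH_{A_0}$ and $G_{A_0}$ acts trivially on $\HH_{A_0}$, this representation factors through the finite group $G_{\{A_0\}}/G_{A_0}\hookrightarrow\Sym(A_0)$. Decomposing $W_{A_0}$ into irreducibles $\sigma_j$ of this finite group reduces everything to showing that each $\Ind_{G_{\{A_0\}}}^G\sigma$ is irreducible for $\sigma$ an irreducible representation of $G_{\{A_0\}}$ trivial on $G_{A_0}$.

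The main point, and the step I expect to be the real obstacle, is this last irreducibility, which is where no algebraicity enters. I would compute the commutant of $\Ind_H^G\sigma$ (with $H=G_{\{A_0\}}$) through its Hecke algebra: bounded intertwiners correspond to equivariant kernels indexed by the double cosets $H\backslash G/H$, i.e.\ by the $H$-orbits on $\mathcal{O}\cong G/H$. The trivial double coset contributes $\mathrm{End}_H(\sigma)=\C$ by Schur's lemma. For $g\notin H$ one has $gA_0\neq A_0$, hence $gA_0\setminus A_0\neq\emptyset$ (as $|gA_0|=|A_0|$), and no algebraicity forces the $G_{\{A_0\}}$-orbit of $gA_0$ to be infinite. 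Concretely, realizing $\Ind_H^G\sigma$ inside $\ell^2(\mathcal{O})\otimes\sigma$, an intertwiner is determined by its value on the base vector, whose coefficients are constant in norm along each $H$-orbit; square-summability then forces the coefficients supported on the infinite off-diagonal orbits to vanish. Hence $\mathrm{End}_G(\Ind_H^G\sigma)=\C$ and the induced representation is irreducible, and the same double-coset computation shows that the summands attached to distinct pairs $(\mathcal{O},\sigma)$ are mutually disjoint. The delicate part throughout is to carry out this Hecke-algebra and boundedness analysis by hand, since $G$ is not locally compact and classical Mackey theory is not directly available.
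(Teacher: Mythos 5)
Your argument is correct, but it takes a genuinely different route from the paper's. The paper proceeds by extraction-plus-Zorn: it picks $A$ of \emph{minimal} cardinality with $\HH_A\neq\{0\}$, uses dissociation together with that minimality to show the translates $\pi(g)\HH_A$ are pairwise orthogonal off $G_{\{A\}}$ (because $\HH_{gA\cap hA}=\{0\}$), embeds $\Ind_{G_{\{A\}}}^G(\sigma)$ isometrically, proves irreducibility of such induced representations by showing every non-zero vector is cyclic (via Alaoglu--Birkhoff, since $p_Af$ is non-zero after translation and lies in the cyclic hull of $f$), and then exhausts $\HH$ by Zorn's lemma using the fact that dissociation passes to subrepresentations. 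Your M\"obius/ANOVA decomposition instead produces in one stroke a canonical, choice-free orthogonal decomposition $\HH=\overline{\bigoplus_A W_A}$ into ``interaction'' spaces permuted by $G$, which the paper never constructs; this makes the multiplicity spaces explicit and is closer in spirit to the probabilistic Hoeffding decomposition underlying dissociation, at the cost of the double-coset commutant computation for irreducibility, which the paper avoids. Both approaches invoke no algebraicity at the same pressure point: for you, to make every off-diagonal double coset $G_{\{A_0\}}gG_{\{A_0\}}$ an infinite union of left cosets so that square-summability kills the corresponding intertwiner components; for the paper, in the support analysis of vectors in $\HH_B$ and in forcing $\HH_{gA\cap hA}=\{0\}$. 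One small imprecision to repair in your Hecke-algebra step: for a fixed $\xi$, translating by $h\in G_{\{A_0\}}$ replaces $\xi$ by $\sigma(h)\xi$, so the individual coefficient norms are not literally constant along a double coset; one should instead sum over an orthonormal basis of the (finite-dimensional) space of $\sigma$, or track the Hilbert--Schmidt norms of the component operators, which \emph{are} constant on double cosets. With that fix the argument closes.
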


In order to obtain Theorem \ref{thmintro.unirep.Isom(UQ)}, it remains to show that every continuous unitary representation of $\Isom(\U_\Delta)$ is dissociated. Using the Kat{\v e}tov construction, we show in Section \ref{sec.Urysohn} that $\Isom(\U_\Delta)$ can be approximated in a certain sense by an increasing sequence of oligomorphic groups, which we obtain as isometry groups of Urysohn spaces with \emph{finite} distance sets. Combining this observation with the fact that dissociation holds for every continuous unitary representation of oligomorphic groups (Proposition 3.2 of \cite{JT}), we prove in Theorem \ref{thm.dissociation.Isom(U_Delta)} that dissociation also holds for every unitary representation of $\Isom(\U_\Delta)$. The notion of approximation and the fact that dissociation passes to the limit is made rigorous in Section \ref{sec.approximating.sequence}.

We believe that the methods we develop to obtain dissociation - and therefore classification of unitary representations - through an approximation argument have a considerable potential for further applications. One possible class of structures for which these methods could apply is that of countable relational structures with a \emph{stationary independence relation}. Such structures can be approximated by substructures (see \cite{Muller}) in a way that generalizes the approximation argument that we use for $\Isom(\U_\Delta)$ in the present article.

\paragraph{Ergodic theoretic consequences.}

Besides being relevant for classifying unitary representations as explained in Theorem \ref{thmintro.classification.dissociated}, the notion of dissociation turns out to be appropriate for various other questions in ergodic theory. Let $(X,\mu)$ be a standard probability space. A p.m.p.\ action of a topological group $G$ on $(X,\mu)$ is an action of $G$ on $X$ by Borel automorphisms, such that the probability measure $\mu$ is $G$-invariant. A p.m.p.\ action $G\curvearrowright (X,\mu)$ leads to a unitary representation via the Koopman representation $\kappa \colon G\mapsto\UU(\L^2_0(X,\mu))$ defined by $\kappa(g)\colon f\mapsto f(g\inv x)$. Here $\L^2_0(X,\mu)=\L^2(X,\mu)\ominus\C$. In the context of closed permutation groups, dissociation of the Koopman representation is equivalent to conditional independence of some sub-$\sigma$-algebras of the Borel $\sigma$-algebra on $X$. We explain this correspondence here. Let $G\leq\Sym(\Omega)$ be a closed permutation group and $G\curvearrowright (X,\mu)$ a p.m.p.\ action. For every finite subset $A\subseteq\Omega$, we denote by $\FF_A$ the $\sigma$-algebra of measurable subsets $Y\subseteq X$ that are $G_A$-invariant, in the sense that for every $g\in G, \mu(gY\triangle Y)=0$. Say that the p.m.p.\ action $G\curvearrowright(X,\mu)$ is dissociated\footnote[1]{Dissociation was defined for ergodic p.m.p.\ actions in \cite{jahel2023stabilizers} as follows: for all $A,B\subseteq\Omega$ finite \emph{disjoint}, $\FF_A$ and $\FF_B$ are independent. We do believe that the present definition, which is stronger, is the correct one and we will amend our previous work to reflect this fact.} if for all finite subsets $A,B\subseteq\Omega$, the $\sigma$-algebras $\FF_A$ and $\FF_B$ are independent conditionally on $\FF_{A\cap B}$. Recall that given three sub-$\sigma$-algebras $\FF_1,\FF_2,\FF_3$, we say that $\FF_1$ and $\FF_3$ are independent conditionally on $\FF_2$ and write $\FF_1\indep_{\FF_2}\FF_3$ if for every $Y\in\FF_3$, we have $\P(Y\mid \sigma(\FF_1,\FF_2))=\P(Y\mid\FF_2)$. 
Conditional independence is related to conditional orthogonality (see \cite[Thm.~8.13]{KallenbergFMP}) in a way that leads to the following.

\begin{fact}
    The p.m.p.\ action $G\curvearrowright(X,\mu)$ is dissociated if and only if its  Koopman representation $\kappa \colon G\to\UU(\L^2_0(X,\mu))$ is dissociated. 
\end{fact}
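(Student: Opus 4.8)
The plan is to build a dictionary between the Hilbert-space data of the Koopman representation and the measure-theoretic data of the action, and then to feed it into the cited correspondence between conditional orthogonality and conditional independence.

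First I would identify, for each finite $A\subseteq\Omega$, the space $\HH_A$ of $\kappa(G_A)$-invariant mean-zero vectors with the $\FF_A$-measurable mean-zero functions. Writing $\L^2(\FF_A)$ for the closed subspace of $\L^2(X,\mu)$ spanned by $\FF_A$-measurable functions, the claim is that $\HH_A=\L^2(\FF_A)\cap\L^2_0(X,\mu)$. A function fixed by $\kappa(g)$ for all $g\in G_A$ has, after separating real and imaginary parts, every level set $\{f>c\}$ fixed modulo $\mu$ by each $g\in G_A$ (since $f\circ g\inv=f$ a.e.), so $\{f>c\}\in\FF_A$ and $f$ is $\FF_A$-measurable; conversely an $\FF_A$-measurable function is an $\L^2$-limit of simple functions $\sum_i c_i\mathds{1}_{Y_i}$ with $Y_i\in\FF_A$, each summand being $\kappa(G_A)$-invariant because $\mu(gY_i\triangle Y_i)=0$. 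Along the way one checks that $\FF_A$ is genuinely a sub-$\sigma$-algebra of the measure algebra, so that $\L^2(\FF_A)$ and the associated conditional expectation are well defined.

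Next I would match the two conditional-orthogonality conditions. Since $A\cap B\subseteq A$ forces $G_A\subseteq G_{A\cap B}$ and hence $\FF_{A\cap B}\subseteq\FF_A$, one has $\HH_{A\cap B}\subseteq\HH_A$, and as the constants lie in every $\L^2(\FF_{\bullet})$ a direct computation gives
\[
\HH_A\cap(\HH_{A\cap B})^\bot=\big(\L^2(\FF_A)\ominus\C\big)\ominus\big(\L^2(\FF_{A\cap B})\ominus\C\big)=\L^2(\FF_A)\ominus\L^2(\FF_{A\cap B}),
\]
and symmetrically for $B$. Thus $\kappa$ being dissociated is precisely the orthogonality of $\L^2(\FF_A)\ominus\L^2(\FF_{A\cap B})$ and $\L^2(\FF_B)\ominus\L^2(\FF_{A\cap B})$ for all finite $A,B$.

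Finally I would invoke the correspondence between conditional independence and conditional orthogonality (Kallenberg, Thm.~8.13): for sub-$\sigma$-algebras with $\FF_{A\cap B}\subseteq\FF_A\cap\FF_B$, one has $\FF_A\indep_{\FF_{A\cap B}}\FF_B$ if and only if $\L^2(\FF_A)\ominus\L^2(\FF_{A\cap B})$ and $\L^2(\FF_B)\ominus\L^2(\FF_{A\cap B})$ are orthogonal. Combined with the previous step this gives the equivalence of action-dissociation and Koopman-dissociation. The only delicate point is the first step: the quantifier ``for every $g\in G_A$'' runs over a possibly uncountable group, but since both the vector identities $\kappa(g)f=f$ and the set identities $\mu(gY\triangle Y)=0$ are imposed separately for each $g$, no simultaneous null set across $G_A$ is needed and the fixed-vector and fixed-set conditions correspond level set by level set.
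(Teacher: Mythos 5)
Your argument is correct and follows essentially the same route the paper intends: the paper states this Fact without proof, justifying it only by the citation to Kallenberg's Theorem~8.13 relating conditional independence to conditional orthogonality, and your proof supplies exactly the dictionary ($\HH_A=\L^2(\FF_A)\cap\L^2_0(X,\mu)$, via invariance of level sets and density of simple functions) needed to apply that theorem. There is nothing to correct.
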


In the work \cite{jahel2023stabilizers} of the last two authors, we show how dissociation for p.m.p.\ actions leads to stabilizer rigidity results \emph{à la }Stuck-Zimmer \cite{StuckZimmer}. Using Theorem 1.4 in \cite{jahel2023stabilizers}, the model theoretic properties satisfied by $\Isom(\U_\Delta)$ (see Lemma \ref{lem.Isom(UDelta).model.theory}) and our result that every unitary representation of $\Isom(\U_\Delta)$ is dissociated (see Theorem \ref{thm.dissociation.Isom(U_Delta)}), we obtain the following result. 

\begin{theorem}\label{thmintro.stabilizer.rigidit.Isom(UQ)}
   Let $\Delta\subseteq\R_+$ be a distance set. Then every ergodic p.m.p.\ action of $\Isom(\U_\Delta)$ is either essentially free (a conull set of points have a trivial stabilizer) or essentially transitive (one orbit has full measure). 
\end{theorem}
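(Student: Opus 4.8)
The plan is to deduce the statement as a direct application of the stabilizer rigidity theorem of \cite{jahel2023stabilizers} (Theorem 1.4 there), whose conclusion is precisely the essentially-free-or-essentially-transitive dichotomy for ergodic p.m.p.\ actions of a closed permutation group, granted that the group satisfies a fixed list of hypotheses. Consequently, the entire argument reduces to checking those hypotheses for $G=\Isom(\U_\Delta)$ and invoking that theorem; there is no new combinatorial or analytic construction to carry out here, all the genuine work having been done in Theorem \ref{thm.dissociation.Isom(U_Delta)} and in \cite{jahel2023stabilizers}. So first I would state precisely the hypotheses required by Theorem 1.4 of \cite{jahel2023stabilizers}, which are of two kinds: model-theoretic conditions on the permutation group $G\leq\Sym(\Omega)$ (with $\Omega=\U_\Delta$), and one analytic condition on the action, namely that its Koopman representation (equivalently, the action itself) be dissociated.

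Next I would verify the model-theoretic input. By construction $\U_\Delta$ is the Fraïssé limit of the finite metric spaces with distances in $\Delta$, so $G=\Isom(\U_\Delta)=\Aut(\U_\Delta)$ is a closed permutation group acting on $\Omega=\U_\Delta$, and in particular it has no algebraicity: for every finite $A\subseteq\U_\Delta$ the orbits of $G_A$ on $\U_\Delta\setminus A$ are infinite, as one sees by amalgamating metric spaces. Rather than re-derive these facts, I would cite Lemma \ref{lem.Isom(UDelta).model.theory}, which is designed to collect exactly the model-theoretic properties of $\Isom(\U_\Delta)$ that the input theorem calls for.

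Then I would supply the analytic input. Given an arbitrary ergodic p.m.p.\ action $\Isom(\U_\Delta)\curvearrowright(X,\mu)$, its Koopman representation $\kappa\colon G\to\UU(\L^2_0(X,\mu))$ is a continuous unitary representation of $G$, hence dissociated by Theorem \ref{thm.dissociation.Isom(U_Delta)}; by the Fact identifying dissociation of a representation with dissociation of the action, the action $G\curvearrowright(X,\mu)$ is itself dissociated. This is exactly the analytic hypothesis demanded by Theorem 1.4 of \cite{jahel2023stabilizers}. With both the model-theoretic and the analytic hypotheses in hand, that theorem applies and yields the dichotomy.

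The one point I would be most careful about is bookkeeping rather than mathematics: I would confirm that the exact formulation of the hypotheses in \cite{jahel2023stabilizers} matches what Lemma \ref{lem.Isom(UDelta).model.theory} and Theorem \ref{thm.dissociation.Isom(U_Delta)} actually deliver. In particular, as flagged in the footnote, the earlier paper stated dissociation only for finite \emph{disjoint} $A,B$, whereas here dissociation is proved for all finite $A,B$ conditionally on $\FF_{A\cap B}$; since the present notion is strictly stronger, any action dissociated in our sense is dissociated in the sense of \cite{jahel2023stabilizers} a fortiori, so the hypothesis of Theorem 1.4 is satisfied regardless of which of the two definitions its proof ultimately requires. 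Verifying this compatibility is the only potential obstacle, and it is resolved precisely because we establish the stronger form of dissociation.
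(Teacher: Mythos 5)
Your proposal is correct and follows exactly the route the paper takes: invoke Theorem 1.4 of \cite{jahel2023stabilizers}, supplying the model-theoretic hypotheses via Lemma \ref{lem.Isom(UDelta).model.theory} and the dissociation hypothesis via Theorem \ref{thm.dissociation.Isom(U_Delta)} together with the Fact relating dissociation of the action to dissociation of its Koopman representation. Your extra remark that the present notion of dissociation is stronger than the one used in \cite{jahel2023stabilizers}, so the hypothesis is met a fortiori, is a sensible bookkeeping check consistent with the paper's own footnote.
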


Let us go through one straightforward consequence of Theorem \ref{thmintro.stabilizer.rigidit.Isom(UQ)} concerning invariant random subgroups of $\Isom(\U_\Delta)$. Let $G$ be a Polish group. Let $\Sub(G)$ be the space of all closed subgroups of $G$. There is a natural $\sigma$-algebra on $\Sub(G)$ called the Effros $\sigma$-algebra which turns $\Sub(G)$ into a standard Borel space. An \defin{invariant random subgroup} (IRS for short) of $G$ is a Borel probability measure on $\Sub(G)$ which is invariant by conjugation. An IRS $\nu$ is concentrated on a conjugacy class if there exists an orbit $O$ of the $G$-action by conjugation on $\Sub(G)$ such that $\nu(O)=1$. For more about IRSs of Polish groups, we refer to \cite{jahel2023stabilizers}. A direct consequence of Theorem \ref{thmintro.stabilizer.rigidit.Isom(UQ)} is the following. 

\begin{corollary}\label{corintro.IRS}
    Let $\Delta\subseteq\R_+$ be a distance set. Then every invariant random subgroup of $\Isom(\U_\Delta)$ is concentrated on an orbit.
\end{corollary}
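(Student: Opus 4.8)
The plan is to apply Theorem \ref{thmintro.stabilizer.rigidit.Isom(UQ)} directly to the conjugation action on the space of subgroups, which is what makes the corollary a \emph{direct} consequence of the dichotomy. Write $G = \Isom(\U_\Delta)$ and let $\nu$ be an invariant random subgroup of $G$. By definition $\nu$ is a Borel probability measure on $\Sub(G)$ invariant under the conjugation action $g\cdot H = gHg\inv$. Since this action is Borel (indeed continuous for the Effros Borel structure) and preserves $\nu$, the pair $G \curvearrowright (\Sub(G),\nu)$ is a p.m.p.\ action of $G$ in the sense of Theorem \ref{thmintro.stabilizer.rigidit.Isom(UQ)}. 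Using the ergodic decomposition of this action, I may assume that $\nu$ is ergodic, so that it suffices to show that an ergodic IRS is concentrated on a single orbit.

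Now Theorem \ref{thmintro.stabilizer.rigidit.Isom(UQ)} supplies a dichotomy for the ergodic action $G \curvearrowright (\Sub(G),\nu)$. If it is essentially transitive, then by definition a single $G$-orbit has full measure; an orbit for the conjugation action is exactly a conjugacy class of subgroups, so $\nu$ is concentrated on an orbit and we are done. If instead it is essentially free, then for $\nu$-a.e.\ $H$ the stabilizer of $H$ under conjugation is trivial. But this stabilizer is the normalizer $N_G(H)$, which always contains $H$; hence $N_G(H) = \{1\}$ forces $H = \{1\}$ for $\nu$-a.e.\ $H$. Therefore $\nu = \delta_{\{1\}}$ is the Dirac mass at the trivial subgroup, which is a conjugation-fixed point and hence a singleton orbit. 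In both cases $\nu$ is concentrated on an orbit.

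The application of the dichotomy is immediate once the set-up is in place, so the work lies in two auxiliary points, which I would take from the IRS framework of \cite{jahel2023stabilizers}. The first is to confirm that the conjugation action on $(\Sub(G),\nu)$ genuinely falls under the hypotheses of Theorem \ref{thmintro.stabilizer.rigidit.Isom(UQ)}: that the Effros Borel structure makes $\Sub(G)$ a standard Borel space, that conjugation is a Borel p.m.p.\ action, and that the ergodic decomposition is available for p.m.p.\ actions of the Polish group $G$. The second, minor, point is the measurability of the conjugacy class in the transitive case, needed to make sense of it having full measure; it is the image of a single orbit under a Borel action and is therefore analytic, hence universally measurable. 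I expect the conceptual crux to be the identification of the stabilizer of $H$ with its normalizer $N_G(H)$, and the resulting collapse to $\delta_{\{1\}}$ in the free case, as this is what converts the abstract ``essentially free or essentially transitive'' alternative into the stated concentration on an orbit.
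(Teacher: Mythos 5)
Your proof is correct and is essentially the intended argument: the paper gives no proof beyond declaring the corollary a direct consequence of Theorem \ref{thmintro.stabilizer.rigidit.Isom(UQ)}, and applying that dichotomy to the conjugation action on $(\Sub(G),\nu)$ --- with the observation that the stabilizer of $H$ is its normalizer $N_G(H)\supseteq H$, so essential freeness forces $\nu=\delta_{\{1\}}$ --- is exactly how one makes it one (the background facts you defer to \cite{jahel2023stabilizers}, namely standardness of the Effros Borel structure and Borelness of the conjugation action, are indeed where they belong). The only caveat is that concentration on an orbit is not preserved under mixtures, so your reduction via ergodic decomposition really proves the statement for \emph{ergodic} $\nu$ only; but that is what the corollary must mean, since otherwise $\tfrac{1}{2}\bigl(\delta_{\{1\}}+\delta_{G}\bigr)$ would already be a counterexample.
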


We finish by another ergodic theoretic application of the notion of dissociation, that of de Finetti's theorem. Let $G\leq\Sym(\Omega)$ be a closed permutation group. For every standard Borel space $Z$, the group $G$ acts on $Z^\Omega$ by shifting coordinates: for all $g\in G$ and $(z_\omega)_{\omega\in\Omega}\in Z^\Omega$, \[g\cdot (z_{\omega})_{\omega\in\Omega}=(z_{g\inv(\omega)})_{\omega\in\Omega}.\] 
Classifying ergodic probability measures on $Z^\Omega$ that are invariant under this action is a main problem in exchangeability theory which dates back to de Finetti. When $G$ acts transitively on $\Omega$ and $\mu$ is an ergodic probability measure on $Z^\Omega$ such that the p.m.p.\ action $G\curvearrowright (Z^\Omega,\mu)$ is dissociated, then $\mu$ is clearly a product measure of the form $\lambda^{\Omega}$ for some Borel probability measure $\lambda$ on $Z$. Since we prove that every p.m.p.\ action (and in fact every unitary representation) of $G=\Isom(\U_\Delta)$ is dissociated, we obtain the following theorem. 

\begin{theorem}\label{thmintro.deFinetti}
    Let $\Delta\subseteq\R_+$ be a distance set. Let $Z$ be a standard Borel space. Then the only ergodic probability measures on $Z^{\U_\Delta}$ that are invariant under the shift action $\Isom(\U_\Delta)\curvearrowright Z^{\U_\Delta}$ are product measures of the form $\lambda^{\U_\Delta}$, where $\lambda$ is a Borel probability measure on $Z$.
\end{theorem}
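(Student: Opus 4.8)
The plan is to derive the theorem as a direct consequence of the dissociation of every p.m.p.\ action of $\Isom(\U_\Delta)$, combined with two soft features of the set-up: the action on $\U_\Delta$ is transitive, and $\mu$ is ergodic. Write $G=\Isom(\U_\Delta)$. First I would record transitivity: by ultrahomogeneity any bijection between two one-point subspaces (automatically an isometry) extends to a global isometry, so $G$ acts transitively on $\U_\Delta$. Now fix an ergodic, shift-invariant probability measure $\mu$ on $Z^{\U_\Delta}$. This data is exactly a p.m.p.\ action $G\curvearrowright(Z^{\U_\Delta},\mu)$, whose Koopman representation is dissociated by Theorem \ref{thm.dissociation.Isom(U_Delta)}; by the Fact relating the two notions, the action itself is dissociated.

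Next I would locate the coordinate $\sigma$-algebras inside the $\FF_A$'s. For finite $A\subseteq\U_\Delta$ let $\mathcal{C}_A=\sigma(p_\omega:\omega\in A)$ be the $\sigma$-algebra generated by the coordinate projections $p_\omega\colon Z^{\U_\Delta}\to Z$. Since $G_A$ fixes every point of $A$, it acts trivially on the coordinates indexed by $A$, so every set in $\mathcal{C}_A$ is genuinely (not merely a.e.) $G_A$-invariant; hence $\mathcal{C}_A\subseteq\FF_A$. Two further inputs: by ergodicity $\FF_\emptyset$ is trivial, and by transitivity all one-dimensional marginals coincide, since for $g\in G$ with $g(\omega)=\omega'$ one has $p_{\omega'}\circ g=p_\omega$, whence $(p_{\omega'})_*\mu=(p_{\omega'})_*(g_*\mu)=(p_\omega)_*\mu$. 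Call this common marginal $\lambda$.

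Then I would run an induction on $|A|$ to prove $\mu|_{\mathcal{C}_A}=\lambda^A$. For the inductive step, take $\omega\notin A$ and apply dissociation to the disjoint pair $A,\{\omega\}$: it gives $\FF_A\indep_{\FF_\emptyset}\FF_{\{\omega\}}$, and since $\FF_\emptyset$ is trivial this is plain independence $\FF_A\indep\FF_{\{\omega\}}$. Transferring this along the inclusions $\mathcal{C}_A\subseteq\FF_A$ and $\mathcal{C}_{\{\omega\}}\subseteq\FF_{\{\omega\}}$, the coordinate at $\omega$ is independent of the coordinates in $A$ and has law $\lambda$; together with the inductive hypothesis this yields $\mu|_{\mathcal{C}_{A\cup\{\omega\}}}=\lambda^{A\cup\{\omega\}}$. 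As the finite cylinder $\sigma$-algebras $\mathcal{C}_A$ generate the Borel $\sigma$-algebra of $Z^{\U_\Delta}$, a monotone-class argument gives $\mu=\lambda^{\U_\Delta}$, as claimed.

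The genuine content is entirely packaged in the earlier dissociation theorem, so there is no serious obstacle in this last argument. The two points needing care are the passage from conditional independence over $\FF_\emptyset$ to outright independence, which relies on ergodicity making $\FF_\emptyset$ trivial, and the inclusion $\mathcal{C}_A\subseteq\FF_A$, which is what lets one transfer the abstract statement about the invariant $\sigma$-algebras to the concrete coordinate $\sigma$-algebras; it is also worth checking that the definition of dissociation (quantified over all finite $A,B$) indeed covers the disjoint case $A\cap\{\omega\}=\emptyset$ used in the induction.
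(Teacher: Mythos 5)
Your proof is correct and takes essentially the same route as the paper: the paper derives the theorem directly from Theorem \ref{thm.dissociation.Isom(U_Delta)} via the observation that for a transitive permutation group, dissociation of an ergodic p.m.p.\ action on $Z^{\U_\Delta}$ forces the measure to be a product measure --- a step the paper declares ``clear'' without further detail. Your write-up merely supplies the omitted details (the inclusion $\mathcal{C}_A\subseteq\FF_A$, triviality of $\FF_\emptyset$ by ergodicity, equality of marginals by transitivity, and the induction over finite cylinders), all of which are sound.
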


\begin{remark}
    A different proof of this result can be obtained via the methods from \cite{BJM}. For this, one has to check that $\mathbb{U}_\Delta$ is $1$-overlap closed. This is done in Section 6 of \cite{AKL}.
\end{remark}

\paragraph{Acknowledgements} We thank Julien Melleray for a discussion around topological simplicity of the isometry group of the Urysohn space $\U$ and François le Maître for many remarks on the paper. Part of this work was conducted while M.J.\ was visiting C.J. in Dresden. M.J. would like to thank the team of the Institute of Algebra for its warm welcome. C.J. was funded by the Deutsche Forschungsgemeinschaft (DFG, German Research Foundation) – project number 467967530.

\section{Background on representation theory}

In this article, Hilbert spaces are always complex. For such a space $\HH$, we denote by $\BB(\HH)$ the space of linear operators $T \colon \HH\to\HH$ which are bounded in the sense that there exists a constant $C>0$ such that for every $\xi\in\HH$, $\lVert T\xi\rVert \leq C\lVert\xi\rVert$. The strong operator topology on $\BB(\HH)$ is the topology induced by the seminorms $T\mapsto \lVert T\xi\rVert$ for $\xi\in\HH$. Examples of bounded operators are orthogonal projections. 
For a closed subspace $\KK\subseteq\HH$, we denote by $p_\KK\in\BB(\HH)$ the orthogonal projection onto $\KK$. Recall that $p_\KK\xi$ is the unique element of $\KK$ minimizing $\lVert\xi-\cdot\rVert$ and that it satisfies $p_\KK+p_{\KK^\bot}=\mathrm{id}_\HH$. The following result is the Hilbertian version of the classical reverse martingale convergence theorem. 

\begin{lemma}[Hilbertian reverse martingale convergence theorem]\label{lem.martingale}
Let $(\HH_n)_{n\geq 0}$ be a decreasing sequence of closed subspaces of a Hilbert space $\HH$ and let $\HH_\infty\coloneqq \bigcap_{n\geq 0}\HH_n$. Then $p_{\HH_n}\to p_{\HH_\infty}$ in the strong operator topology.  
\end{lemma}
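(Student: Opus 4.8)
The plan is to work pointwise: fix $\xi\in\HH$ and set $\eta_n\coloneqq p_{\HH_n}\xi$, the goal being to prove $\eta_n\to p_{\HH_\infty}\xi$ in norm. The starting observation is that since the subspaces are nested, $\HH_{n+1}\subseteq\HH_n$, the projections compose as $p_{\HH_{n+1}}=p_{\HH_{n+1}}p_{\HH_n}$, so that $\eta_m=p_{\HH_m}\eta_n$ for all $m\geq n$. In particular $\lVert\eta_m\rVert\leq\lVert\eta_n\rVert$ whenever $m\geq n$, so the sequence $(\lVert\eta_n\rVert)_{n\geq 0}$ is non-increasing and bounded below by $0$, hence converges to some real number $c\geq 0$.

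First I would show that $(\eta_n)$ is Cauchy. For $m\geq n$, write $\eta_n-\eta_m=(\mathrm{id}_\HH-p_{\HH_m})\eta_n$, which lies in $\HH_m^\bot$ and is therefore orthogonal to $\eta_m\in\HH_m$. Pythagoras' identity then gives $\lVert\eta_n\rVert^2=\lVert\eta_m\rVert^2+\lVert\eta_n-\eta_m\rVert^2$, so that
\[
\lVert\eta_n-\eta_m\rVert^2=\lVert\eta_n\rVert^2-\lVert\eta_m\rVert^2\xrightarrow[n,m\to\infty]{}c-c=0.
\]
Hence $(\eta_n)$ converges to some $\eta\in\HH$. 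Since each $\HH_n$ is closed and $\eta_m\in\HH_n$ for all $m\geq n$, the limit satisfies $\eta\in\HH_n$ for every $n$, so $\eta\in\HH_\infty$.

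It then remains to identify $\eta$ with $p_{\HH_\infty}\xi$, for which it suffices to check that $\xi-\eta$ is orthogonal to $\HH_\infty$. Given $\zeta\in\HH_\infty\subseteq\HH_n$, the defining property of the projection yields $\langle\xi-\eta_n,\zeta\rangle=0$ for every $n$, and letting $n\to\infty$ gives $\langle\xi-\eta,\zeta\rangle=0$. Combined with $\eta\in\HH_\infty$, this shows $\eta=p_{\HH_\infty}\xi$, and therefore $p_{\HH_n}\xi=\eta_n\to p_{\HH_\infty}\xi$ for every $\xi$, i.e.\ convergence in the strong operator topology.

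As for difficulty, there is no serious obstacle here: the whole argument rests on the monotonicity of $\lVert p_{\HH_n}\xi\rVert$ together with the Pythagorean telescoping that upgrades this to the Cauchy property. The one point deserving care is ensuring that the limit vector genuinely lands in $\HH_\infty$ (using closedness of each $\HH_n$) and that it coincides with the projection rather than merely being \emph{some} element of $\HH_\infty$; both follow from the orthogonality computations above.
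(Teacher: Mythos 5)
Your proof is correct, and it is the standard argument for this classical fact; the paper itself states Lemma \ref{lem.martingale} without proof, so there is nothing to compare against. The only blemish is cosmetic: since $\lVert\eta_n\rVert\to c$, the telescoped quantity $\lVert\eta_n\rVert^2-\lVert\eta_m\rVert^2$ tends to $c^2-c^2=0$ rather than $c-c=0$; the conclusion is unaffected.
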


If $\HH_1,\HH_2,\HH_3$ are three closed subspaces of $\HH$ satisfying $\HH_2\subseteq\HH_1\cap\HH_3$, we say that $\HH_1$ and $\HH_3$ are \defin{orthogonal conditionally on} $\HH_2$ and write $\HH_1\bot_{\HH_2}\HH_3$ if the subspaces $\HH_1\cap(\HH_2)^\bot$ and $\HH_3\cap (\HH_2)^\bot$ are orthogonal. The following lemma is a useful (and straightforward) characterization of conditional orthogonality using orthogonal projections.

\begin{lemma}\label{lem.conditional.orthogonality}$\HH_1\bot_{\HH_2}\HH_3$ if and only if $p_{\HH_1}p_{\HH_3}=p_{\HH_2}$. If this holds,  $\HH_2 = \HH_1\cap\HH_3$.
\end{lemma}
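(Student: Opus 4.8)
The plan is to reduce the whole statement to a single algebraic identity between the three projections, obtained by decomposing $\HH_1$ and $\HH_3$ along the common subspace $\HH_2$. Since $\HH_2\subseteq\HH_1\cap\HH_3$, I would set $\KK_1\coloneqq\HH_1\cap(\HH_2)^\bot$ and $\KK_3\coloneqq\HH_3\cap(\HH_2)^\bot$, so that $\HH_1=\HH_2\oplus\KK_1$ and $\HH_3=\HH_2\oplus\KK_3$ are orthogonal direct sums of closed subspaces. By the very definition of conditional orthogonality, $\HH_1\bot_{\HH_2}\HH_3$ means exactly $\KK_1\perp\KK_3$. At the level of projections these decompositions read $p_{\HH_1}=p_{\HH_2}+p_{\KK_1}$ and $p_{\HH_3}=p_{\HH_2}+p_{\KK_3}$, where each sum is a sum of projections onto mutually orthogonal subspaces.

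Next I would simply expand the product $p_{\HH_1}p_{\HH_3}$. Using that $p_{\HH_2}$ is idempotent and that the inclusions $\KK_1,\KK_3\subseteq(\HH_2)^\bot$ force $p_{\HH_2}p_{\KK_3}=0$ and $p_{\KK_1}p_{\HH_2}=0$, every cross term vanishes except one, giving
\[p_{\HH_1}p_{\HH_3}=p_{\HH_2}+p_{\KK_1}p_{\KK_3}.\]
Hence $p_{\HH_1}p_{\HH_3}=p_{\HH_2}$ if and only if $p_{\KK_1}p_{\KK_3}=0$. To close the equivalence I would observe that, since the range of $p_{\KK_3}$ is $\KK_3$, the identity $p_{\KK_1}p_{\KK_3}=0$ is equivalent to $\KK_3\subseteq(\KK_1)^\bot$, that is, to $\KK_1\perp\KK_3$, which is precisely $\HH_1\bot_{\HH_2}\HH_3$.

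For the final assertion, assume $p_{\HH_1}p_{\HH_3}=p_{\HH_2}$ and take an arbitrary $\xi\in\HH_1\cap\HH_3$. Then $p_{\HH_3}\xi=\xi$ and $p_{\HH_1}\xi=\xi$, so that $p_{\HH_2}\xi=p_{\HH_1}p_{\HH_3}\xi=p_{\HH_1}\xi=\xi$, whence $\xi\in\HH_2$. Together with the standing hypothesis $\HH_2\subseteq\HH_1\cap\HH_3$ this yields $\HH_2=\HH_1\cap\HH_3$.

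None of the steps presents a genuine obstacle; the argument is entirely elementary once the orthogonal splitting of $\HH_1$ and $\HH_3$ over $\HH_2$ is set up. The only point requiring a little care is the vanishing of the cross terms in the expansion, which rests solely on the inclusions $\KK_1,\KK_3\subseteq(\HH_2)^\bot$ and the elementary fact that $p_{\KK}\eta=0$ whenever $\eta\perp\KK$.
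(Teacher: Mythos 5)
Your proof is correct and complete: the orthogonal splittings $\HH_1=\HH_2\oplus\KK_1$, $\HH_3=\HH_2\oplus\KK_3$, the resulting identity $p_{\HH_1}p_{\HH_3}=p_{\HH_2}+p_{\KK_1}p_{\KK_3}$, and the final computation for $\HH_2=\HH_1\cap\HH_3$ are all valid. The paper states this lemma without proof, calling it straightforward, and your argument is precisely the standard verification one would expect.
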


Let $\UU(\HH)$ be the unitary group of $\HH$. Equipped with the strong operator topology, this is a Polish group. Let $G$ be a topological group. A \defin{unitary representation} of $G$ is a homomorphism from $G$ to the unitary group $\UU(\HH)$. \emph{In this article, unitary representations will always be continuous homomorphisms. Since we will be dealing with Polish groups, we may always assume that Hilbert spaces are separable}. 

Let $\pi \colon G \to \UU(\HH)$ be a unitary representation of a topological group $G$. Given a subgroup $K\leq G$, we will denote by $\HH^K$ the closed subspace of $\pi(K)$-invariant vectors and by $p_K\in\BB(\HH)$ the orthogonal projection onto $\HH^K$. For $\xi\in\HH$,  $\conv G \xi$ denote the closed convex hull of $\pi(G)\xi$. The \defin{$G$-cyclic hull} of $\xi$ is the closure of the linear span of the orbit of $\xi$ under $G$. Similarly, if $\KK$ is a subspace of $\HH$, the $G$-cyclic hull of $\KK$ is the closure of the linear span of $G\cdot \KK = \{\pi(g)\xi, \ \xi \in \KK\}$. The vector $\xi$ (resp.\ the subspace $\KK$) is said to be \defin{$G$-cyclic in $\HH$} if the $G$-cyclic hull of $\xi$ (resp.\ of $\KK$) is $\HH$. 

The following useful theorem is due to Alaoglu-Birkhoff \cite{AlaogluBirkhoff}. To make the treatment comprehensive, we include a proof.

\begin{theorem}[Alaoglu-Birkhoff]\label{Th.BirkAl}
    Let $\HH$ be a Hilbert space and $G$ be any subgroup of $\UU(\HH)$. For every $\xi \in \HH$, $p_{G}\xi$ is the unique vector of minimal norm in $\conv G \xi$. In particular, $p_G\xi$ lies in the $G$-cyclic hull of $\xi$.
\end{theorem}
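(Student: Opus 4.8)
The plan is to realize $p_G\xi$ as the unique minimal-norm element of the closed convex set $C\coloneqq\conv G\xi$, by combining three ingredients: the existence and uniqueness of a minimal-norm element in a nonempty closed convex subset of a Hilbert space, the fact that $G$ acts by norm-preserving maps that stabilize $C$, and the fact that the orthogonal projection onto the invariant subspace commutes with the $G$-action. Since $G\leq\UU(\HH)$, the inclusion is the representation, so $C$ is the closed convex hull of the orbit $G\xi$.

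First I would invoke the standard Hilbert space fact that every nonempty closed convex subset of $\HH$ contains a unique element of minimal norm; this is proved via the parallelogram identity, which forces any minimizing sequence to be Cauchy, together with completeness and the closedness of the set. The set $C$ is nonempty (it contains $\xi$), convex, and closed by definition, so it admits a unique minimal-norm vector $\eta$.

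Next I would show $\eta\in\HH^G$. Each $g\in G$ is unitary, hence norm-preserving, and since $gG\xi=G\xi$ and $g$ is linear and continuous, one has $gC=C$. Therefore $g\eta\in C$ with $\lVert g\eta\rVert=\lVert\eta\rVert$, so $g\eta$ is again a minimal-norm element of $C$; by uniqueness, $g\eta=\eta$. As this holds for all $g\in G$, the vector $\eta$ is $G$-invariant. I would then identify $\eta$ with $p_G\xi$: the subspace $\HH^G$ is closed and $G$-invariant, and because $G$ acts by unitaries its orthogonal complement is $G$-invariant as well, so $p_G=p_{\HH^G}$ commutes with the $G$-action. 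Hence for every $g\in G$ we get $p_G(g\xi)=g\,p_G\xi=p_G\xi$, the last equality because $p_G\xi\in\HH^G$ is fixed by $G$. Since $p_G$ is linear and continuous and the coefficients of any convex combination sum to $1$, it follows that $p_G$ is constant equal to $p_G\xi$ on the whole of $C$. Applying this to $\eta\in C$ gives $p_G\eta=p_G\xi$; but $\eta\in\HH^G$ by the previous step, so $p_G\eta=\eta$, whence $\eta=p_G\xi$. The final assertion is then immediate, since $p_G\xi=\eta$ belongs to $C=\conv G\xi$, which is contained in the $G$-cyclic hull of $\xi$ because the closed linear span of the orbit contains its closed convex hull.

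The existence and uniqueness of the minimal-norm vector is the routine part. The conceptual heart, and the only place needing care, is the interplay between uniqueness and norm-preservation that pins down the $G$-invariance of $\eta$, together with the commutation of $p_G$ with the $G$-action that identifies the norm-minimizer with the invariant projection; verifying $gC=C$ rigorously (using that $g$ is a homeomorphism mapping the orbit onto itself) is the small technical point to get right.
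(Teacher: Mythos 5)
Your proof is correct and shares the paper's skeleton: both arguments first take the unique minimal-norm vector $\eta$ of the closed convex set $\conv G \xi$ and deduce $\eta\in\HH^G$ from uniqueness together with the fact that each $g\in G$ is a norm-preserving bijection of $\conv G \xi$. Where you diverge is in the identification $\eta=p_G\xi$. The paper translates so that the minimizer is $0$ and then shows directly that $\xi$ is orthogonal to $\HH^G$, by approximating $0$ with convex combinations $\sum_i\lambda_iT_i\xi$ and estimating $\lvert\langle\xi\mid\eta'\rangle\rvert=\lvert\langle\sum_i\lambda_iT_i\xi\mid\eta'\rangle\rvert\leq\varepsilon\lVert\eta'\rVert$ for $\eta'\in\HH^G$. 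You instead observe that $p_G$ commutes with the $G$-action (since $\HH^G$ and its orthogonal complement are $G$-invariant), hence is constant on the orbit $G\xi$, hence by linearity and continuity constant on $\conv G \xi$; evaluating at $\eta$ gives $\eta=p_G\eta=p_G\xi$. The two mechanisms encode the same underlying fact --- that pairing against $\HH^G$ is constant along the orbit --- but yours avoids the translation trick and the explicit $\varepsilon$-estimate, and reads more cleanly. The only thing you lose is the by-product established in the paper's final paragraph, namely that $\eta$ is the unique $G$-\emph{invariant} vector of $\conv G \xi$; that assertion is not part of the theorem as stated, so your argument is complete as written.
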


\begin{proof}
    Fix $\xi\in\HH$. Let $\xi_0\in \conv G \xi$ be the unique vector of minimal norm. Since $\conv G \xi$ is $G$-invariant, then $\xi_0$ belongs to $\HH^G$. We first show that $p_G\xi=\xi_0$. Notice that $0\in\conv G {\xi-\xi_0}=\conv G \xi - \xi_0$. So we may assume that $\xi_0=0$. For every $\varepsilon>0$, there exists $T_1,\dots,T_n\in G$ and $\lambda_1,\dots,\lambda_n\in\R_+$ such that $\lambda_1+\dots+\lambda_n=1$ and 
    \[\left\lVert\sum_{i=1}^n\lambda_i T_i\xi\right\rVert \leq\varepsilon.\]
    Therefore, for every $\eta\in\HH^G$, we have 
    \begin{align*}
        \lvert\langle \xi\mid\eta\rangle\rvert &= \left\lvert \sum_{i=1}^n \lambda_i\langle \xi \mid T_i\inv\eta\rangle\right\rvert \\ &=\left\lvert \langle\sum_{i=1}^n \lambda_i T_i\xi \mid \eta\rangle\right\rvert \\
        &\leq \varepsilon\lVert\eta\rVert.
    \end{align*}
 Thus, $\xi$ is orthogonal to $\HH^G$ and this proves that $p_G\xi=0$. 

    Finally, let us prove that $\xi_0$ is the unique $G$-invariant vector of $\conv G \xi$.
    Fix $\xi_1\in \conv G \xi\cap\HH^G$. For $i\in\{1,2\}$, define $\eta_i=\xi-\xi_i$. Define $\eta=\eta_0-\eta_1=\xi_1-\xi_0$ and observe that $\eta\in\HH^G$. Fix $t\in\R$. For $T_1,\dots,T_n\in G$ and $\lambda_1,\dots,\lambda_n\in\R_+$ satisfying $\lambda_1+\dots+\lambda_n=1$, we have
    \[\left\lVert\sum_{i=1}^n\lambda_iT_i(\eta+t\eta_0)\right\rVert\leq \lVert\eta+t\eta_0\rVert.\]
    Moreover, the left-hand side of the inequality can be made arbitrarily close to $\lVert \eta\rVert$ since $0\in\conv G {\eta_0}=\conv G \xi - \xi_0$. Thus, for all $t\in\R$, we have $\lVert \eta\rVert\leq \lVert \eta+t\eta_0\rVert$. Similarly, for all $t\in\R$, we have $\lVert \eta\rVert\leq \lVert \eta+t\eta_1\rVert$. We therefore get that $\langle\eta\mid\eta_0\rangle=\langle\eta\mid\eta_1\rangle=0$ and thus $\lVert\eta\rVert^2=\langle\eta\mid\eta_0-\eta_1\rangle=0$. This shows that $\xi_0$ is the unique element of $\conv G \xi\cap\HH^G$.  
\end{proof}

A subrepresentation of  a unitary representation $G\to\UU(\HH)$ is a closed, $G$-invariant vector subspace of $\HH$. A unitary representation is \defin{irreducible} if its only subrepresentations are $\{0\}$ and $\HH$. Note that a closed subspace $\KK\subseteq \HH$ is $G$-invariant if and only if $p_\KK$ and $\pi(g)$ commute for every $g \in G$.

\begin{lemma}
\label{lem.projections.commute} Let $G$ be a topological group and $\pi\colon G\to \UU(\HH)$ a unitary representation. Let $K\leq G$ be a subgroup and $\KK\subseteq\HH$ be a subrepresentation. Then the following holds
\begin{enumerate}
    \item\label{item.commutation} $p_K$ and $p_\KK$ commutes.
    \item If $q_K\in\BB(\KK)$ denotes the orthogonal projection onto $\KK^K$, then $(p_K)_{\mid\KK}=q_K$.
\end{enumerate}  
\end{lemma}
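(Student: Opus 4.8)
The plan is to exploit the single fact recalled just before the statement: a closed subspace is a subrepresentation precisely when its orthogonal projection commutes with every $\pi(g)$. Thus $p_\KK$ commutes with $\pi(g)$ for all $g\in G$, and in particular for all $g\in K$. Everything else is bookkeeping built on this commutation.

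For item \ref{item.commutation}, I would observe that $\HH^K$ is exactly the set of common fixed vectors of the unitaries $\{\pi(g):g\in K\}$. If $\xi\in\HH^K$, then for every $g\in K$ we have $\pi(g)p_\KK\xi=p_\KK\pi(g)\xi=p_\KK\xi$, so $p_\KK\xi\in\HH^K$; that is, $p_\KK$ preserves $\HH^K$. Since $p_\KK$ is self-adjoint, it then also preserves $(\HH^K)^\bot$: for $\xi\in(\HH^K)^\bot$ and $\eta\in\HH^K$ one has $\langle p_\KK\xi\mid\eta\rangle=\langle\xi\mid p_\KK\eta\rangle=0$ because $p_\KK\eta\in\HH^K$. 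An operator (here $p_\KK$) that stabilizes both $\HH^K$ and its orthogonal complement commutes with the orthogonal projection $p_K$ onto $\HH^K$, which is the claim.

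For the second item, item \ref{item.commutation} gives $p_Kp_\KK=p_\KK p_K$, and commuting projections preserve each other's ranges; in particular $p_K$ maps $\KK$ into $\KK$. The restriction $(p_K)_{\mid\KK}$ is then a self-adjoint idempotent of $\KK$, hence an orthogonal projection, and I would identify its range. On one hand, for $\xi\in\KK$ the vector $p_K\xi$ lies in $\HH^K$ and in $\KK$, hence in $\KK\cap\HH^K=\KK^K$. On the other hand, every $\eta\in\KK^K\subseteq\HH^K$ satisfies $p_K\eta=\eta$ and lies in $\KK$, so $\KK^K$ is contained in the range. Thus $(p_K)_{\mid\KK}$ is the orthogonal projection onto $\KK^K$, i.e.\ equals $q_K$.

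There is no serious obstacle here; the only points requiring a little care are the range identity $\KK\cap\HH^K=\KK^K$ and the standard fact that a self-adjoint operator stabilizing a closed subspace also stabilizes its orthogonal complement. Alternatively, the second item can be obtained directly from the Alaoglu--Birkhoff theorem above: for $\xi\in\KK$ the closed convex hull $\conv K \xi$ is contained in $\KK$, so its unique vector of minimal norm — which is $p_K\xi$ computed in $\HH$ and $q_K\xi$ computed in $\KK$ — is one and the same, yielding $(p_K)_{\mid\KK}=q_K$; applying this to the subrepresentation $\KK^\bot$ as well then recovers item \ref{item.commutation}.
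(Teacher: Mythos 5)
Your proof is correct, but your primary argument for item \ref{item.commutation} takes a different route from the paper's. The paper derives the commutation from the Alaoglu--Birkhoff theorem: since $p_\KK$ commutes with $\pi(g)$ for all $g\in K$, it maps $\conv{K}{\xi}$ into $\conv{K}{p_\KK\xi}$, and $p_\KK p_K\xi$ is a $K$-invariant element of the latter set, hence equals $p_K p_\KK\xi$ by uniqueness of the invariant vector in a closed convex $K$-invariant set. You instead give the elementary Hilbert-space argument: $p_\KK$ preserves $\HH^K$ because it commutes with each $\pi(g)$, $g \in K$, hence by self-adjointness it also preserves $(\HH^K)^\bot$, and an operator stabilizing both a closed subspace and its orthogonal complement commutes with the corresponding projection. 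This buys you independence from Theorem \ref{Th.BirkAl} (and from any convexity argument), at the cost of invoking the standard fact about self-adjoint operators and invariant subspaces; the paper's version is a one-line payoff of machinery it has already set up. For item 2 the two arguments are essentially equivalent: the paper identifies $p_K\xi$ as the minimizer of $\lVert\xi-\cdot\rVert$ over $\KK^K$, while you identify $(p_K)_{\mid\KK}$ as a self-adjoint idempotent with range $\KK\cap\HH^K=\KK^K$; both hinge on the same observation that $p_K\xi\in\KK$ for $\xi\in\KK$, which follows from item \ref{item.commutation}. Your closing alternative (deducing item 2 directly from Alaoglu--Birkhoff via $\conv{K}{\xi}\subseteq\KK$ and then recovering item \ref{item.commutation} by applying the result to the subrepresentation $\KK^\bot$ as well) is closest in spirit to the paper and is also valid.
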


\begin{proof}
    \begin{enumerate}
        \item Let $\xi \in \HH$. Since $p_\KK$ is continuous and commutes with $\pi(g)$ for every $g \in G$, it must satisfy $p_\KK(\conv K \xi ) \subseteq\conv K {p_\KK\xi}$. As $p_\KK p_K\xi$ is clearly $K$-invariant, we conclude by Theorem \ref{Th.BirkAl}.

        \item Fix $\xi\in\KK$. By \ref{item.commutation}, $p_K\xi=p_Kp_\KK\xi=p_{\KK}p_K\xi\in\KK$. Therefore, $p_K\xi\in\KK^K$. We deduce that $p_K\xi$ is the unique element of $\KK^K$ minimizing $\lVert\xi-\cdot\rVert$, that is $p_K\xi=q_K\xi$.\qedhere
        
        \end{enumerate}
\end{proof}

One of the main techniques in representation theory is that of induction. Let $G$ be a topological group and $H\leq G$ an open subgroup. Let $\sigma \colon H\to\UU(\KK)$ be a unitary representation. Let $\EE$ be the space of maps $f\colon G\to\KK$ such that for every $g\in G$, $h\in H$, $f(gh)=\sigma(h\inv)f(g)$. Notice that the map $g\mapsto\lVert f(g)\rVert$ is constant on each left $H$-coset. Denote by $\lVert f(q)\rVert$ its value on the coset $q\in G/H$. 
Let $\HH$ be the Hilbert space of all $f\in \EE$ such that 
\[\sum_{q\in G/H}\lVert f(q)\rVert^2 <+\infty.\]
The induced representation $\pi\coloneqq\mathrm{Ind}_H^G(\sigma)$ is the representation of $G$ on $\HH$ defined by 
\[\pi(g)f \colon x\mapsto f(g\inv x)\quad\text{ for all }g\in G, f\in\HH.\]
Since $H$ is open, $\Ind_H^G(\sigma) \colon G\to\UU(\HH)$ is indeed continuous. For a map $f\in\HH$, we will denote by $\supp(f)$ the set of elements $g\in G$ such that $f(g)\neq 0$. Here are some classical general facts about induced representations. We refer for instance to \cite[Sec.~1.F]{BdlH} for more details.

\begin{lemma}\label{lem.induced.classical}
    Let $G$ be a topological group and $H\leq G$ an open subgroup. Let $\sigma \colon H\to\UU(\KK)$ be a unitary representation and $\pi = \Ind_H^G(\sigma)$. Write $\HH$ for the underlying Hilbert space of the representation $\pi$. Let  $\HH_0 \coloneq \{f\in \HH \colon \supp(f) \subseteq H\}$. 

    \begin{enumerate}
        \item $\HH_0$ is a closed subspace of $\HH$ that is stable under action of $H$. 

        \item The restriction of $\pi$ to $H$ and $\HH_0$ is canonically isomorphic to $\sigma$.

        \item $\HH_0$ is $G$-cyclic in $\HH$.
    \end{enumerate}
\end{lemma}

A topological group is \defin{non-archimedean} if it admits a basis of neighborhoods consisting of open subgroups. The class of topological groups under consideration in this article is the class of non-archimedean Polish groups, which coincides with that of closed permutation groups \cite[Thm.~1.5.1]{BeckerKechris}. For such groups, the following classical lemma (a proof of which can be found for instance in \cite[Lem.~3.1]{Tsankov}) turns out to be very useful.

\begin{lemma}\label{lem.continuity}
    Let $G$ be a topological group and let $(V_i)_{i\geq 0}$ be a countable basis of neighborhood of the identity consisting of open subgroups. Then for every unitary representation $G\to\UU(\HH)$, the space $\bigcup_{i\geq 0}\HH^{V_i}$ is dense in $\HH$. 
\end{lemma}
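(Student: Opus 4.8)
The plan is to exploit continuity of the representation together with the Alaoglu--Birkhoff theorem (Theorem \ref{Th.BirkAl}). Fix a vector $\xi\in\HH$ and $\varepsilon>0$; the goal is to produce some index $i$ and a vector in $\HH^{V_i}$ lying within $\varepsilon$ of $\xi$, which suffices for density since $\xi$ and $\varepsilon$ are arbitrary.

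First I would use continuity of $\pi$. The orbit map $g\mapsto\pi(g)\xi$ is continuous at the identity, so there is an open neighborhood $U$ of $1\in G$ with $\lVert\pi(g)\xi-\xi\rVert<\varepsilon$ for all $g\in U$. Since $(V_i)_{i\geq 0}$ is a basis of neighborhoods of the identity, we may choose $i$ with $V_i\subseteq U$; then $\lVert\pi(g)\xi-\xi\rVert<\varepsilon$ for every $g\in V_i$.

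The key step is then to transfer this estimate to the projection $p_{V_i}\xi\in\HH^{V_i}$. The inequality above says that the whole orbit $\pi(V_i)\xi$ is contained in the closed ball $\overline{B}(\xi,\varepsilon)$. This ball is closed and convex, hence it contains the closed convex hull $\conv{V_i}{\xi}$. By Theorem \ref{Th.BirkAl}, the projection $p_{V_i}\xi$ is the vector of minimal norm in $\conv{V_i}{\xi}$, and in particular $p_{V_i}\xi\in\conv{V_i}{\xi}\subseteq\overline{B}(\xi,\varepsilon)$. Therefore $\lVert p_{V_i}\xi-\xi\rVert\leq\varepsilon$, with $p_{V_i}\xi\in\HH^{V_i}\subseteq\bigcup_{j\geq 0}\HH^{V_j}$. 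Since $\xi$ and $\varepsilon$ were arbitrary, every vector of $\HH$ lies in the closure of $\bigcup_{j}\HH^{V_j}$, which is the desired density.

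I do not expect a genuine obstacle here: the only content is the reduction of density to approximation by invariant vectors, and Alaoglu--Birkhoff is precisely the tool that guarantees the invariant projection stays inside the convex hull of a nearby orbit. The one mild point to be careful about is that the basis $(V_i)$ need not be nested, but monotonicity is never used---all that matters is that some $V_i$ fits inside the neighborhood $U$ furnished by continuity.
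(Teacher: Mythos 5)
Your proof is correct and is essentially the standard argument that the paper points to (it does not reprove the lemma but cites \cite[Lem.~3.1]{Tsankov}, whose proof is exactly this: continuity gives an open subgroup $V_i$ whose orbit of $\xi$ stays within $\varepsilon$, and Alaoglu--Birkhoff places the invariant vector $p_{V_i}\xi$ inside the closed convex hull of that orbit, hence within $\varepsilon$ of $\xi$). Your remark that nestedness of the $(V_i)$ is irrelevant is also accurate.
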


Given a closed permutation group $G\leq\Sym(\Omega)$, a continuous unitary representation $\pi \colon G\to\UU(\HH)$ and a finite subset $A\subseteq\Omega$, we denote (for conciseness) by $\HH_A$ the subspace $\HH^{G_A}$, that is, the subspace of $G_A$-invariant vectors in $\HH$. Similarly, we denote by $p_A\in\BB(\HH)$ the orthogonal projection onto $\HH_A$. The subspaces $\HH_A$ play an important role in the definition of dissociation that is the main topic of the next section.

\section{Dissociated unitary representations}\label{sec.dissociation.unirep}

\subsection{Dissociation and induced representations}

We introduce here a structural property for unitary representations of closed permutation groups that we call dissociation. As the present paper focuses on $\Isom(\Q\U)$ which has no algebraicity and  weakly eliminates imaginaries (see Lemma \ref{lem.Isom(UDelta).model.theory}), the following definition is tailored for groups satisfying these two assumptions (see Remark \ref{rem.dissociation.NoAlg.WEI}). Without these assumptions, dissociation can be defined by mimicking Proposition 3.2 of \cite{JT}; this will be the topic of a future work. 

\begin{definition}
    Let $G\leq\Sym(\Omega)$ be a closed permutation group. A unitary representation $\pi \colon G\to\UU(\HH)$ is \defin{dissociated} if for all finite subsets $A,B\subseteq\Omega$, the subspaces $\HH_A$ and $\HH_B$ are orthogonal conditionally on $\HH_{A\cap B}$. 
\end{definition}

\begin{remark}\label{rem.dissociation.NoAlg.WEI}
    Let $G\leq \Sym(\Omega)$ be a closed permutation group acting without fixed points on $\Omega$. Assume that for every unitary representation $\pi \colon G\to\UU(\HH)$ and all finite subsets $A,B\subseteq\Omega$, we have $\HH_A\bot_{\HH_{A\cap B}}\HH_B$. Fix any two finite subsets $A,B\subseteq\Omega$. By dissociation of the quasi-regular representation $G\to\UU(\ell^2(G/{\langle G_A,G_B\rangle}))$, we get that $\HH^{\langle G_A,G_B\rangle}=\HH_A\cap\HH_B=\HH_{A\cap B}$. Thus, $\langle G_A,G_B\rangle = G_{A\cap B}$. Therefore, $G$ has no algebraicity and weakly eliminates imaginaries by Lemma \ref{lem.NoAlg.WEI}.
\end{remark}

\begin{remark}
    A closed permutation group $G\subseteq \Sym(\Omega)$ is \defin{oligormorphic} if for every $n\in \N$, the diagonal action $G \curvearrowright \Omega^n$ has finitely many orbits. When $G$ is the automorphism group of a first order structure with domain $\Omega$ in a countable signature, this is equivalent to the structure being $\aleph_0$-categorical by the celebrated Ryll-Nardzewski Theorem \cite[Thm.~7.3.1]{Hodges_1993}. Examples of such structures include the countable set, the rationals with the usual order, the Rado graph, the countable vector space over a finite field, the countable atomless boolean algebra and $\Delta$-Urysohn spaces where $\Delta$ is a finite distance set. 
    
    Oligomorphic groups with no algebraicity that admit weak elimination of the imaginaries is an important class of examples of groups for which \textit{all} unitary representations are dissociated (see Proposition 3.2 in \cite{JT}). Notice that dissociation is obtained in \cite{JT} as a corollary of the classification due to Tsankov \cite{Tsankov} of unitary representations of oligomorphic groups. We go here in the other direction, by proving that the abstract notion of dissociation implies a classification of the unitary representations. This method applies to new examples such as $\Isom(\U_\Delta)$ for \textit{every} countable distance set $\Delta$. 
\end{remark}

We prove below that dissociation passes to subrepresentations.
\begin{lemma}\label{lem.dissociation.subrepresentation}
    Let $G\leq\Sym(\Omega)$ be a closed permutation group. Let $\pi \colon G\to\UU(\HH)$ be a unitary representation. If $\pi$ is dissociated, then every subrepresentation of $\pi$ is dissociated. 
\end{lemma}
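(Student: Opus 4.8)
The plan is to translate the statement into the language of orthogonal projections and then reduce everything to the two parts of Lemma \ref{lem.projections.commute}. By Lemma \ref{lem.conditional.orthogonality}, saying that $\pi$ is dissociated is the same as saying that $p_A p_B = p_{A\cap B}$ for all finite $A,B\subseteq\Omega$, where $p_A$ denotes the projection onto $\HH_A=\HH^{G_A}$. So the hypothesis I have at hand is precisely the family of identities $p_A p_B = p_{A\cap B}$, and the goal is to establish the analogous identities \emph{inside} the subrepresentation.

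Let $\KK\subseteq\HH$ be a subrepresentation, and for a finite $A\subseteq\Omega$ write $\KK_A\coloneqq\KK^{G_A}$ for the $G_A$-invariant vectors of the restricted representation and $q_A\in\BB(\KK)$ for the orthogonal projection onto $\KK_A$. First I would record, applying Lemma \ref{lem.projections.commute} with $K=G_A$, two facts: (1) $p_A$ commutes with $p_\KK$, so that $\KK$ is invariant under $p_A$; and (2) the restriction $(p_A)_{\mid\KK}$ equals $q_A$. Note also that $\KK_{A\cap B}\subseteq\KK_A\cap\KK_B$, since $G_A,G_B\subseteq G_{A\cap B}$ forces $\KK^{G_{A\cap B}}\subseteq\KK^{G_A}\cap\KK^{G_B}$, so that conditional orthogonality of $\KK_A$ and $\KK_B$ over $\KK_{A\cap B}$ is meaningful.

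With these observations in place the proof is a one-line computation. Fix finite $A,B\subseteq\Omega$ and $\xi\in\KK$. Since $\KK$ is invariant under $p_B$, the vector $p_B\xi$ lies in $\KK$, whence
\[
q_A q_B \xi = p_A p_B \xi = p_{A\cap B}\xi = q_{A\cap B}\xi,
\]
where the outer equalities use $(p_A)_{\mid\KK}=q_A$, $(p_B)_{\mid\KK}=q_B$ and $(p_{A\cap B})_{\mid\KK}=q_{A\cap B}$, while the middle one is the dissociation hypothesis for $\pi$. Hence $q_A q_B = q_{A\cap B}$, and Lemma \ref{lem.conditional.orthogonality} applied inside $\KK$ yields $\KK_A\bot_{\KK_{A\cap B}}\KK_B$. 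As $A$ and $B$ were arbitrary, the restriction of $\pi$ to $\KK$ is dissociated.

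The only delicate point — and it is handled entirely by Lemma \ref{lem.projections.commute} — is the identification of the invariant-vector projections of the subrepresentation with the restrictions of the ambient ones; once this is in hand, no genuine obstacle remains, and the argument is purely formal.
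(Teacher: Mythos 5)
Your proof is correct and is essentially identical to the paper's own argument: both reduce dissociation to the projection identity $p_Ap_B=p_{A\cap B}$ via Lemma \ref{lem.conditional.orthogonality} and then use Lemma \ref{lem.projections.commute} to identify the restricted projections $(p_A)_{\mid\KK}$ with $q_A$, yielding the same one-line computation $q_Aq_B\xi=p_Ap_B\xi=p_{A\cap B}\xi=q_{A\cap B}\xi$. The extra remarks you make (invariance of $\KK$ under $p_A$, the inclusion $\KK_{A\cap B}\subseteq\KK_A\cap\KK_B$) are correct and only make explicit what the paper leaves implicit.
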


\begin{proof}
    Let $\KK\subseteq\HH$ be a subrepresentation of $\pi$. 
    Let $A,B\subseteq\Omega$ be two finite subsets. Write $p_A,p_B,p_{A\cap B}\in\BB(\HH)$ for the orthogonal projections onto $\HH_A$, $\HH_B$ and $\HH_{A\cap B}$ and write $q_A,q_B,q_{A\cap B}\in\BB(\KK)$ for the orthogonal projections onto $\KK_A$, $\KK_B$ and $\KK_{A\cap B}$. For $\xi\in\KK$, we have that
    \[q_Aq_B\xi = p_Ap_B\xi = p_{A\cap B}\xi = q_{A\cap B}\xi \]
    where the first and last equality uses Lemma \ref{lem.projections.commute} and the middle one uses the assumption that $\pi$ is dissociated. Thus, the subrepresentation $\KK$ is dissociated. 
\end{proof}

The following theorem is a essential step towards classifying dissociated unitary representations. A similar method has been used by Ol'shanskii in \cite[Lem~2.2]{Olshanskii} for $G=\Sym(\Omega)$.

\begin{theorem}\label{thm.dissociation.implies.induced}
    Let $G\leq\Sym(\Omega)$ be a closed subgroup. Let $\pi \colon G\to\UU(\HH)$ be a non-zero unitary representation. If $\pi$ is dissociated, then $\pi$ contains a non-zero subrepresentation which is of the form $\Ind_{G_{\{A\}}}^G(\sigma)$ for some finite non-empty subset $A\subseteq\Omega$ and some unitary representation $\sigma$ of $G_{\{A\}}$ that is trivial on $G_A$. 
\end{theorem}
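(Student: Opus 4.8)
The plan is to realize the desired subrepresentation as the $G$-cyclic hull of a subspace of the form $\HH_A$, for a suitably chosen finite set $A$, taking $\sigma$ to be the tautological action of the setwise stabilizer $G_{\{A\}}$ on $\HH_A$. Since $\pi\neq 0$, Lemma~\ref{lem.continuity} shows that $\bigcup_{A}\HH_A$ is dense, so $\HH_A\neq\{0\}$ for some finite $A\subseteq\Omega$; I would pick such an $A$ of minimal cardinality and set $\KK_0\coloneqq\HH_A$. By first splitting off the invariant vectors $\HH^G=\HH_\emptyset$ (a subrepresentation, hence dissociated by Lemma~\ref{lem.dissociation.subrepresentation}, and one to which any induced piece found in its complement will belong), I may assume $\HH^G=\{0\}$, which forces this minimal $A$ to be non-empty. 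Observe that $G_{\{A\}}$ is open, as it contains the open subgroup $G_A$, and that $\KK_0$ is $G_{\{A\}}$-invariant: for $g\in G_{\{A\}}$ one has $\pi(g)\HH_A=\HH_{gA}=\HH_A$ (using $gG_Ag^{-1}=G_{gA}$ and $gA=A$). Since $\KK_0\subseteq\HH^{G_A}$, the point stabilizer $G_A$ acts trivially on $\KK_0$, so $\sigma\coloneqq\pi|_{G_{\{A\}}}$ acting on $\KK_0$ is trivial on $G_A$, exactly as required.

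\emph{Orthogonality of translates (the key step).} I would next show that $\pi(g)\KK_0\perp\KK_0$ for every $g\in G\setminus G_{\{A\}}$. For such $g$ we have $gA\neq A$, and since $|gA|=|A|$ this forces $A\cap gA\subsetneq A$; minimality of $|A|$ then gives $\HH_{A\cap gA}=\{0\}$. Now $\KK_0=\HH_A$ and $\pi(g)\KK_0=\HH_{gA}$, while dissociation asserts $\HH_A\bot_{\HH_{A\cap gA}}\HH_{gA}$. As the conditioning subspace $\HH_{A\cap gA}$ vanishes, this is plain orthogonality $\HH_A\perp\HH_{gA}$; equivalently, $p_Ap_{gA}=p_{A\cap gA}=0$ by Lemma~\ref{lem.conditional.orthogonality}. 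Since $\pi(g)\KK_0$ depends only on the coset $gG_{\{A\}}$, the family $\{\pi(g)\KK_0\}_{gG_{\{A\}}\in G/G_{\{A\}}}$ is pairwise orthogonal.

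\emph{Identifying the cyclic hull with an induced representation.} Let $\LL$ be the $G$-cyclic hull of $\KK_0$. The previous step shows $\LL=\overline{\bigoplus_{gG_{\{A\}}\in G/G_{\{A\}}}\pi(g)\KK_0}$, an orthogonal direct sum of copies of $(\sigma,\KK_0)$ permuted by $G$ according to its action on $G/G_{\{A\}}$. This matches the internal description of the induced representation: writing $\HH_{\mathrm{ind}}$ for the space of $\Ind_{G_{\{A\}}}^G(\sigma)$ and $\HH_0\subseteq\HH_{\mathrm{ind}}$ for the functions supported on $G_{\{A\}}$, Lemma~\ref{lem.induced.classical} identifies $(\sigma,\KK_0)$ with the $G_{\{A\}}$-representation on $\HH_0$, shows $\HH_0$ is $G$-cyclic, and yields, since $\pi_{\mathrm{ind}}(g)\HH_0$ consists of functions supported on $gG_{\{A\}}$, that its translates are pairwise orthogonal. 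I would then define the isomorphism by sending $\pi(g)\xi$ to $\pi_{\mathrm{ind}}(g)f_\xi$, where $\xi\mapsto f_\xi$ denotes the canonical isomorphism $\KK_0\cong\HH_0$, and verify that it is a well-defined $G$-equivariant isometry $\LL\to\HH_{\mathrm{ind}}$, concluding $\LL\cong\Ind_{G_{\{A\}}}^G(\sigma)$.

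I expect this last identification to demand the most care: one must check that the coset-wise assignment is independent of the chosen representative $g$ and isometric across the decomposition, and this is precisely where the pairwise orthogonality of the second step is used simultaneously on both $\LL$ and $\HH_{\mathrm{ind}}$. A minor bookkeeping point is the reduction guaranteeing $A\neq\emptyset$: the only situation it fails to cover is when $\pi$ is a non-zero multiple of the trivial representation, which is the degenerate $A=\emptyset$ case ($\Ind_G^G(\sigma)=\sigma$) and is harmless for the eventual inductive decomposition of $\pi$.
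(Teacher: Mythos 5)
Your proof follows essentially the same route as the paper's: pick $A$ of minimal cardinality with $\HH_A\neq\{0\}$, use dissociation together with that minimality to get pairwise orthogonality of the translates $\pi(g)\HH_A$ across cosets of $G_{\{A\}}$, and identify the $G$-cyclic hull of $\HH_A$ with $\Ind_{G_{\{A\}}}^G(\sigma)$ via the canonical coset-wise isometry. Your explicit treatment of the degenerate case $A=\emptyset$ (splitting off $\HH^G$ first) is a small point of extra care that the paper leaves implicit, but otherwise the two arguments coincide.
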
 

\begin{proof}
    By Lemma \ref{lem.continuity}, fix $A\subseteq \Omega$ finite with minimal cardinality for the property $\HH_A \neq \{0\}$. Then $\HH_A$ is a closed subspace of $\HH$ stable under the action of $\Nm G A$. Thus restricting $\pi$ gives rise to a representation $\sigma$ of $\Nm G A$ on $\HH_A$, which is trivial on $G_A$. Let us show that $\mathrm{Ind}_{\Nm G A}^G(\sigma)$ is a subrepresentation of $\pi$.

    \begin{claim*}
        For all $g,h\in G$ such that $h\inv g\notin \Nm G A$, we have $\pi(g)\HH_A \perp \pi(h)\HH_A$.
    \end{claim*} 

\begin{cproof}
    Notice that $\pi(g)\HH_A = \HH_{gA}$ and similarly for $h$. Since $\pi$ is dissociated, 
    $$\HH_{gA}\perp_{\HH_{gA \cap hA}} \HH_hA.$$
    Assuming $h^{-1}g\notin \Nm G A$, then $\ gA \neq  hA$ and $|gA \cap hA|<|A|$. In particular, we get $\HH_{gA\cap hA} =\{0\}$ by minimality of $|A|$ and the claim is proved.
\end{cproof}

    Next, denote by $\KK$ the underlying Hilbert space of $\mathrm{Ind}_{\Nm G A}^G(\sigma)$ and let $(g_i)_{i\in I}$ be a system of representatives of left $\Nm G A$-cosets in $G$. The previous claim ensures that the following map is well defined and isometric:
    $$\KK \longrightarrow \HH , \qquad f \longmapsto \sum_{i\in I} \pi(g_i)f(g_i).$$
    It is easily seen that this map does not depend on the choice of $(g_i)_{i\in I}$ and, hence, that it is $G$-equivariant. Therefore, $\Ind_{G_{\{A\}}}^G(\sigma)$ is a subrepresentation of $\pi$. 
\end{proof}

The second step towards classifying dissociated unitary representations is a fine analysis of induced representations of the form $\Ind_{G_{\{A\}}}^G(\sigma)$.

\begin{remark} \label{rem.quotient}
Note that if $H\trianglelefteq K$ are topological groups, then representations of $K$ that are trivial on $H$ and representations of $K/H$ are really the same thing. We will make implicit use of this observation when stating results and manipulating representations. 

In our context, if $G\leqslant \Sym(\Omega)$ is a closed permutation group and $A \subseteq \Omega$ is finite, then $\Nm G A / G_A$ is a finite group, which naturally identifies as a subgroup of $\Sym(A)$. In particular, every irreducible representation of $\Nm G A$ which is trivial on $G_A$ is finite dimensional.
\end{remark}

\begin{lemma}\label{lem.induction.analyzed}
    Let $G\leq\Sym(\Omega)$ be a closed permutation group with no algebraicity. Let $A\subseteq\Omega$ be a finite subset and $\sigma$ a unitary representation of $G_{\{A\}}/G_A$. Consider $\pi=\Ind_{G_{\{A\}}}^G(\sigma)$ and denote by $\HH$ its underlying Hilbert space. Then the following holds. 
    \begin{enumerate}
        \item\label{item.HB} For every  $B\subseteq\Omega$ finite, we have $\HH_B=\{f\in\HH\colon \supp(f)\subseteq\{g\in G\colon gA\subseteq B\}\}$.  In particular, $\HH_A = \{f\in \HH\colon \supp(f) \subseteq \Nm G A\}$ and the restriction of $\pi$ to $\HH_A$ and $\Nm G A$ is isomorphic to $\sigma$.
        
        \item\label{item.projection.is.multiplication} For every $B\subseteq\Omega$ finite, $p_B\in\BB(\HH)$ is the multiplication by $\mathds{1}_{\{g\in G\colon gA\subseteq B\}}$.
        
        \item\label{item.induced.irreducible} $\pi$ is irreducible if and only if $\sigma$ is.
    \end{enumerate}
\end{lemma}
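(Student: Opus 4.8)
The plan is to exploit the explicit coset decomposition of the induced representation. Write $H=\Nm G A$ and $\HH_0=\{f\in\HH\colon\supp(f)\subseteq H\}$ as in Lemma \ref{lem.induced.classical}, and recall that $\HH$ splits as the orthogonal direct sum $\bigoplus_{q\in G/H}\HH^{(q)}$ of the subspaces $\HH^{(gH)}=\pi(g)\HH_0$ of functions supported on a single left coset; the map $gH\mapsto gA$ identifies $G/H$ with the set of copies $\{gA\colon g\in G\}$, and under this identification $\pi(k)$ permutes the summands according to the action $k\cdot gA=(kg)A$. Throughout I write $S_B=\{g\in G\colon gA\subseteq B\}$, a union of left cosets of $H$.

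For item (\ref{item.HB}) I would first record that a $G_B$-invariant $f$ must be supported on the union of the \emph{finite} $G_B$-orbits of cosets: if an orbit $\OO$ is infinite, then invariance forces $\lVert f(q)\rVert$ to be constant over $q\in\OO$ (as $\pi(k)$ is isometric and permutes the summands transitively on $\OO$), and the summability $\sum_q\lVert f(q)\rVert^2<\infty$ then forces this constant to vanish. The key point — and the only place no algebraicity enters — is that the finite $G_B$-orbits are exactly the fixed cosets $gH$ with $gA\subseteq B$. Indeed, if $gA\not\subseteq B$, choose $a\in gA\setminus B$; since $G_B\actson\Omega\setminus B$ has no finite orbit, the $G_B$-orbit of $a$ is infinite, and as every set $k\cdot gA$ contains $ka$, the orbit of $gA$ cannot be finite. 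Conversely, if $gA\subseteq B$ then $G_B$ fixes $gA$ pointwise, so $gA$ is a fixed point. It remains to check that $G_B$ acts \emph{trivially} on each such fixed summand $\HH^{(gH)}$: for $k\in G_B$ one has $g\inv kg\in G_A$ (because $k$ fixes $gA$ pointwise), and since $\sigma$ is trivial on $G_A$ a direct computation gives $\pi(k)f=f$ for $f$ supported on $gH$. Combining these facts yields $\HH_B=\{f\in\HH\colon\supp(f)\subseteq S_B\}$; taking $B=A$ gives $S_A=H$, hence $\HH_A=\HH_0$, and Lemma \ref{lem.induced.classical} identifies the restriction to $H$ with $\sigma$. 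Item (\ref{item.projection.is.multiplication}) is then immediate: since $\HH_B$ is the sum of the pairwise orthogonal summands $\HH^{(q)}$ with $q\subseteq S_B$, the projection $p_B$ discards the components outside $S_B$, i.e. it is multiplication by $\mathds{1}_{S_B}=\mathds{1}_{\{g\colon gA\subseteq B\}}$ (this multiplier preserves $\EE$ because $S_B$ is right $H$-invariant).

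For item (\ref{item.induced.irreducible}) I would argue both directions using the cyclicity of $\HH_0=\HH_A$ from Lemma \ref{lem.induced.classical}. If $\sigma$ is irreducible, let $\KK\neq 0$ be a subrepresentation; by Lemma \ref{lem.continuity} some $\KK\cap\HH_B=\KK^{G_B}$ is nonzero, so a nonzero $\xi\in\KK\cap\HH_B$ has a nonzero component on some coset $g_0 H$ with $g_0 A\subseteq B$. Applying item (\ref{item.projection.is.multiplication}) with $B'=g_0 A$ (for which $S_{B'}=g_0 H$) together with Lemma \ref{lem.projections.commute} gives $0\neq p_{B'}\xi\in\KK\cap\HH^{(g_0 H)}$, and translating by $\pi(g_0\inv)$ produces a nonzero vector of $\KK\cap\HH_A$. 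As $\KK\cap\HH_A$ is a nonzero $H$-invariant subspace of $\HH_A\cong\sigma$, irreducibility forces $\HH_A\subseteq\KK$, whence $\KK=\HH$ by cyclicity. Conversely, suppose $\pi$ is irreducible and let $\KK_0\subseteq\HH_A$ be $H$-invariant, with $G$-cyclic hull $\KK$. Since $p_A$ is multiplication by $\mathds{1}_H$ by item (\ref{item.projection.is.multiplication}), it kills $\pi(g)\eta$ whenever $g\notin H$ and restricts to $\sigma(g)$ on $\KK_0$ when $g\in H$; hence $p_A(\KK)=\KK_0$. If $\KK_0\neq 0$ then $\KK=\HH$ by irreducibility of $\pi$, so $\KK_0=p_A(\HH)=\HH_A$, and thus $\sigma$ has no proper nonzero subrepresentation.

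I expect the main obstacle to be the identification of finite $G_B$-orbits with fixed cosets in item (\ref{item.HB}), which is the only genuinely non-formal step and the sole place where no algebraicity is used; its subtle companion — that the $G_B$-action on a fixed summand is trivial, relying on $\sigma$ being trivial on $G_A$ — is the point one must not overlook, after which items (\ref{item.projection.is.multiplication}) and (\ref{item.induced.irreducible}) are essentially formal consequences of the coset decomposition and cyclicity.
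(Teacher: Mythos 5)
Your proof is correct and follows essentially the same route as the paper's: no algebraicity identifies the support of a $G_B$-invariant function via a summability argument (you phrase it through orbits of cosets, the paper through double cosets $G_BgG_{\{A\}}$ — same substance), triviality of $\sigma$ on $G_A$ gives the converse inclusion, the projections are multiplication operators, and irreducibility is reduced to the cyclicity of $\HH_A$. The only minor divergences are in item (\ref{item.induced.irreducible}), where you locate a nonzero vector of $\KK\cap\HH_A$ inside a subrepresentation via Lemma \ref{lem.continuity} and Lemma \ref{lem.projections.commute} instead of the paper's ``translate $f$ so that $f(e_G)\neq 0$ and apply Alaoglu--Birkhoff'', and where you prove the converse directly rather than citing that induction preserves subrepresentations; both variants are valid.
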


\begin{proof}
\begin{enumerate}
    \item Let $f\in\HH_B$. Notice that the map $g\mapsto \lVert f(g)\rVert$ is constant on each double coset of the form $G_BgG_{\{A\}}$. Since $\lVert f\rVert^2 =\sum_{q\in G/G_{\{A\}}}\lVert f(q)\rVert^2$ is finite, this shows that for every $g\in\supp(f)$, the double coset $G_{B}gG_{\{A\}}$ is a disjoint union of finitely many left cosets of $G_{\{A\}}$. Since $A$ is finite, this is equivalent to saying that for every $a\in A$, $G_Bg(a)$ is finite. Using that $G$ has no algebraicity, we get that $gA\subseteq B$ and thus $\supp(f)\subseteq\{g\in G\colon gA\subseteq B\}$. 

    Conversely, let $f\in\HH$ be such that $\supp(f)\subseteq\{g\in G\colon gA\subseteq B\}$. Fix $h\in G_B$. We need to prove that $\pi(h)f=f$, that is, for every $g\in G$, $f(h\inv g)=f(g)$. There are two cases to check.
    
    \begin{itemize}
    \item If $gA\not\subseteq B$, then $h\inv gA\not\subseteq B$ and thus $f(h\inv g) = 0=f(g)$. 

    \item If $gA\subseteq B$, then $h\inv g\in G_Bg$. But $G_Bg=gG_{g\inv B}\subseteq gG_A$. So there exists $k\in G_A$ such that $h\inv g=gk$. Thus
    \[f(h\inv g)=\sigma(k\inv)(f(g))=f(g)\]
    since $\sigma$ is trivial on $G_A$. 
    \end{itemize}

    Applying the above to $B=A$, we indeed obtain $\HH_A = \{f\in \HH\colon \supp(f) \subseteq \Nm G A\}$. The last claim then follows from Lemma \ref{lem.induced.classical}.

    \item Fix $B\subseteq\Omega$ finite and $f\in\HH$. Let $M$ be the sum of $\lVert f(q)\rVert^2$ for every $q\in G/G_{\{A\}}$ satisfying $q\cap\{g\in G\colon gA\subseteq B\} = \emptyset$. Using the description of $\HH_B$ obtained above, it is clear that for every $f'\in\HH_B$, we have 
    $\lVert f-f'\rVert^2\geq M$ with equality if and only if $f'=f\cdot\mathds{1}_{\{g\in G\colon gA\subseteq B\}}$, which proves the result. 
    
    \item Assume that $\sigma$ is irreducible. Note that a representation is irreducible if and only if every non-zero vector is cyclic. We first proof the following:
    \begin{claim*}
        Every non-zero vector in $\HH_A$ is $G$-cyclic in $\HH$.
    \end{claim*}
    \begin{cproof}
        Using Item \ref{item.HB} and Lemma \ref{lem.induced.classical}, we get that $\Nm G A \curvearrowright \HH_A$ is irreducible. Thus every non-zero vector in $\HH_A$ is $\Nm G A$-cyclic in $\HH_A$. But $\HH_A$ is $G$-cyclic in $\HH$ by Item \ref{item.HB} and Lemma \ref{lem.induced.classical}.
    \end{cproof}
    
    Fix $f \in \HH\backslash\{0\}$. Up to translating $f$ using $\pi$, we can assume that $f(e_G)\neq 0$. It follows from Item \ref{item.projection.is.multiplication} that  $p_A$ is the multiplication by $\mathds{1}_{\Nm G A}$. In particular, $p_Af\neq 0$. By the claim, $p_Af$ is a $G$-cyclic vector in $\HH$ that lies in the cyclic hull of $f$ by the Alaoglu-Birkhoff theorem. Necessarily, $f$ is also $G$-cyclic in $\HH$ and $\pi$ is irreducible.
    
    The converse is straightforward since induction preserves subrepresentations, see for instance \cite[Cor.~E.2.3]{BdlHV}.\qedhere
    \end{enumerate}
\end{proof}

\begin{corollary}\label{cor.induced.are.dissociated}
    Let $G\leq\Sym(\Omega)$ be a closed permutation group with no algebraicity. Let $A\subseteq\Omega$ be a finite subset and $\sigma$ a unitary representation of $G_{\{A\}}/G_A$. Then $\Ind_{G_{\{A\}}}^G(\sigma)$ is dissociated. 
\end{corollary}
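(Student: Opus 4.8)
The corollary asserts that $\Ind_{G_{\{A\}}}^G(\sigma)$ is dissociated, where $G$ has no algebraicity. Dissociation means: for all finite $B, C \subseteq \Omega$, we have $\HH_B \perp_{\HH_{B\cap C}} \HH_C$, equivalently (by Lemma \ref{lem.conditional.orthogonality}) that $p_B p_C = p_{B\cap C}$.

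The key tool is Lemma \ref{lem.induction.analyzed}, item \ref{item.projection.is.multiplication}: each projection $p_B$ is just multiplication by the indicator $\mathds{1}_{S_B}$ where $S_B := \{g\in G : gA \subseteq B\}$.

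**My plan.** Let me verify: $p_B p_C$ is multiplication by $\mathds{1}_{S_B}\cdot\mathds{1}_{S_C} = \mathds{1}_{S_B \cap S_C}$, while $p_{B\cap C}$ is multiplication by $\mathds{1}_{S_{B\cap C}}$. So the whole corollary reduces to the set-theoretic identity $S_B \cap S_C = S_{B\cap C}$, i.e. $gA\subseteq B$ and $gA\subseteq C$ iff $gA\subseteq B\cap C$. This is trivially true! So the proof is essentially immediate.

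=== PROOF PROPOSAL ===

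The plan is to reduce the conditional orthogonality condition to a trivial set-theoretic identity, exploiting the explicit description of the relevant projections obtained in Lemma \ref{lem.induction.analyzed}. By Lemma \ref{lem.conditional.orthogonality}, dissociation of $\pi = \Ind_{G_{\{A\}}}^G(\sigma)$ amounts to showing that $p_B p_C = p_{B\cap C}$ for all finite subsets $B, C \subseteq \Omega$. The crucial input is item \ref{item.projection.is.multiplication} of Lemma \ref{lem.induction.analyzed}: for any finite $B$, the projection $p_B$ is simply the multiplication operator by the indicator function $\mathds{1}_{S_B}$, where I write $S_B \coloneqq \{g \in G \colon gA \subseteq B\}$. (This is precisely the step that uses the no-algebraicity hypothesis, already absorbed into Lemma \ref{lem.induction.analyzed}.)

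First I would record that multiplication operators compose by pointwise multiplication of their symbols, so $p_B p_C$ is multiplication by $\mathds{1}_{S_B} \cdot \mathds{1}_{S_C} = \mathds{1}_{S_B \cap S_C}$. Next I would observe the elementary identity $S_B \cap S_C = S_{B \cap C}$: indeed, for any $g \in G$, the condition $gA \subseteq B$ together with $gA \subseteq C$ holds if and only if $gA \subseteq B \cap C$. Combining these two observations yields that $p_B p_C$ and $p_{B \cap C}$ are multiplication by the same indicator function, hence equal as operators. By Lemma \ref{lem.conditional.orthogonality}, this gives $\HH_B \perp_{\HH_{B\cap C}} \HH_C$, which is exactly the definition of dissociation.

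There is essentially no obstacle here: the entire content has been front-loaded into Lemma \ref{lem.induction.analyzed}, which translates the abstract projections $p_B$ into concrete multiplication operators. Once one has that description, the conditional orthogonality collapses to the monotonicity identity $S_B \cap S_C = S_{B\cap C}$, which holds for the trivial reason that $B \cap C$ is the largest set contained in both $B$ and $C$. The only point deserving a line of care is the verification that $p_{B\cap C}$ has the expected symbol $\mathds{1}_{S_{B\cap C}}$, but this too is an instance of the same item \ref{item.projection.is.multiplication} applied to the finite set $B \cap C$. Thus the proof is a short direct computation rather than anything requiring a genuinely new idea.
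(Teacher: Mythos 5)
Your proof is correct and is essentially identical to the paper's: both deduce $p_B p_C = p_{B\cap C}$ from Item \ref{item.projection.is.multiplication} of Lemma \ref{lem.induction.analyzed} and conclude via conditional orthogonality. You simply spell out the set-theoretic identity $\{g : gA\subseteq B\}\cap\{g : gA\subseteq C\}=\{g : gA\subseteq B\cap C\}$ that the paper leaves implicit in the phrase ``we readily get.''
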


\begin{proof}
   Using Item \ref{item.projection.is.multiplication} of Lemma \ref{lem.induction.analyzed}, we readily get that for all finite subsets $B,C\subseteq\Omega$, $p_Bp_C=p_{B\cap C}.$ Therefore, $\HH_B\bot_{\HH_{B\cap C}}\HH_C$ and thus $\Ind_{G_{\{A\}}}^G(\sigma)$ is dissociated.
\end{proof}

We therefore obtain a complete classification of the dissociated representations of a permutation group with no algebraicity, which is very similar to Tsankov's classification of unitary representations for oligomorphic groups \cite{Tsankov}. Given a subgroup $K$ of $G$ and an element $g\in G$, we will write $K^g = gKg^{-1}$ and denote by $\sigma^g$ the unitary representation of $K^g$ given by $\sigma^g(u) = \sigma(g^{-1}ug)$ for every $u\in H^g$.

\begin{theorem}\label{thm.classification.dissociated}
    Let $G\leq\Sym(\Omega)$ be a closed permutation group without algebraicity.
    \begin{enumerate}
        \item The dissociated irreducible unitary representations of $G$ are exactly the unitary representations isomorphic to one of the form $\Ind_{G_{\{A\}}}^G(\sigma)$ where $A$ ranges over the finite subsets of $\Omega$ and $\sigma$ over the irreducible representations of the finite group $G_{\{A\}}/G_A$.

        \item Two such irreducible representations $\Ind_{G_{\{A\}}}^G(\sigma)$ and $\Ind_{G_{\{ B\}}}^G(\tau)$ are isomorphic if and only if there exists $g\in G$ such that $gA = B$ and $\sigma^g \simeq \tau$.

        \item Every dissociated unitary representation of $G$ splits as direct a sum of irreducible subrepresentations.
    \end{enumerate}
\end{theorem}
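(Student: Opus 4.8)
The plan is to handle the three items in turn, leaning throughout on the structural analysis of the induced representations $\Ind_{G_{\{A\}}}^G(\sigma)$ carried out in Lemma \ref{lem.induction.analyzed} and Corollary \ref{cor.induced.are.dissociated}. For item (1) I would prove the two inclusions separately. One direction is immediate: if $\sigma$ is an irreducible representation of the finite group $G_{\{A\}}/G_A$, then $\Ind_{G_{\{A\}}}^G(\sigma)$ is dissociated by Corollary \ref{cor.induced.are.dissociated} and irreducible by Item \ref{item.induced.irreducible} of Lemma \ref{lem.induction.analyzed}. For the converse, let $\pi$ be a dissociated irreducible representation; by Theorem \ref{thm.dissociation.implies.induced} it contains a nonzero subrepresentation isomorphic to $\Ind_{G_{\{A\}}}^G(\sigma)$ with $\sigma$ trivial on $G_A$. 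Irreducibility of $\pi$ forces this subrepresentation to be all of $\pi$, so $\pi\cong\Ind_{G_{\{A\}}}^G(\sigma)$; since $\sigma$ is trivial on $G_A$ it factors through the finite group $G_{\{A\}}/G_A$ (Remark \ref{rem.quotient}), and the ``only if'' part of Item \ref{item.induced.irreducible} of Lemma \ref{lem.induction.analyzed} then forces $\sigma$ to be irreducible.

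For item (2), the backward direction is formal: if $gA=B$ then $G_{\{B\}}=gG_{\{A\}}g^{-1}$, and induction is insensitive to conjugating the inducing subgroup and representation simultaneously, so $\Ind_{G_{\{B\}}}^G(\sigma^g)\cong\Ind_{G_{\{A\}}}^G(\sigma)$, which together with $\sigma^g\cong\tau$ gives the isomorphism. The substantial direction is the forward one, and I expect it to be the main obstacle: extracting a concrete $g_0\in G$ with $g_0A=B$ from an abstract unitary intertwiner. Writing $\HH,\KK$ for the two underlying spaces and $\Phi\colon\HH\to\KK$ for a $G$-equivariant isomorphism, the key point is that $\Phi$ carries $\HH_C=\HH^{G_C}$ onto $\KK_C=\KK^{G_C}$ for every finite $C$, so $C\mapsto\dim\HH_C$ is an isomorphism invariant. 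By Item \ref{item.HB} of Lemma \ref{lem.induction.analyzed}, $\HH_C\neq\{0\}$ precisely when $gA\subseteq C$ for some $g$; hence the minimal cardinality of a nonvanishing $C$ equals $|A|$, and among sets of that cardinality the nonvanishing ones are exactly the $G$-translates of $A$. Matching this data for $\HH$ and $\KK$ yields $|A|=|B|$ and some $g_0$ with $g_0A=B$. Using the backward direction to replace $\sigma$ by $\sigma^{g_0}$ reduces to the case $A=B$, where I would restrict $\Phi$ to $\HH_A$: as $G_{\{A\}}$ normalizes $G_A$, the space $\HH_A$ is $G_{\{A\}}$-invariant and $\Phi$ restricts to a $G_{\{A\}}$-equivariant isomorphism $\HH_A\to\KK_A$, which by the last claim of Item \ref{item.HB} of Lemma \ref{lem.induction.analyzed} carry $\sigma^{g_0}$ and $\tau$ respectively; thus $\sigma^{g_0}\cong\tau$.

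For item (3) I would run a standard maximality argument whose one nonformal ingredient is that every nonzero dissociated representation contains a nonzero irreducible subrepresentation. This follows by applying Theorem \ref{thm.dissociation.implies.induced} to obtain a subrepresentation $\Ind_{G_{\{A\}}}^G(\sigma)$ with $\sigma$ factoring through the finite group $G_{\{A\}}/G_A$, decomposing $\sigma=\bigoplus_j\sigma_j$ into irreducibles, and using that induction commutes with direct sums together with Item \ref{item.induced.irreducible} of Lemma \ref{lem.induction.analyzed} to exhibit an irreducible summand. Now, given a dissociated $\pi\colon G\to\UU(\HH)$, choose by Zorn's lemma a maximal family of pairwise orthogonal irreducible subrepresentations and let $\KK$ be the closure of their sum. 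If $\KK\neq\HH$, then $\KK^\perp$ is a nonzero subrepresentation, dissociated by Lemma \ref{lem.dissociation.subrepresentation}, so the preceding ingredient produces a nonzero irreducible subrepresentation of $\KK^\perp$ orthogonal to every member of the family, contradicting maximality. Hence $\KK=\HH$ and $\pi$ splits as the claimed direct sum.
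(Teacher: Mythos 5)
Your proposal is correct and follows essentially the same route as the paper: Theorem \ref{thm.dissociation.implies.induced} plus Item \ref{item.induced.irreducible} of Lemma \ref{lem.induction.analyzed} for item (1), comparison of the spaces $\HH_C$ via Item \ref{item.HB} of Lemma \ref{lem.induction.analyzed} to produce $g$ with $gA=B$ and then restriction of the intertwiner to $\HH_A$ for item (2), and a Zorn's lemma argument using Lemma \ref{lem.dissociation.subrepresentation} for item (3). If anything, you are slightly more explicit than the paper on the converse inclusions (invoking Corollary \ref{cor.induced.are.dissociated} in item (1) and spelling out the backward direction of item (2)), which the paper leaves implicit.
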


\begin{proof}
    \;

    \begin{enumerate}
        \item Every irreducible unitary representation of $G$  has this form by Theorem \ref{thm.dissociation.implies.induced}. Moreover, all of these representations are irreducible by Item \ref{item.induced.irreducible} in Lemma \ref{lem.induction.analyzed}.

        \item Let $A, B$ be finite subsets of $\Omega$ and $\sigma, \tau$ be irreducible representations of $\Nm G A / G_A$ and $\Nm G B / G_B$ respectively. Assume that $\pi = \Ind_{G_{\{A\}}}^G(\sigma)$ and $\pi'=\Ind_{G_{\{ B\}}}^G(\tau)$ are isomorphic, with respective underlying Hilbert spaces $\HH$ and $\KK$. Then $\HH_B \simeq \KK_B$ which is non-zero by Item \ref{item.HB} in Lemma \ref{lem.induction.analyzed}, hence there exists $g \in G$ such that $gA\subset B$. By symmetry, $|A|=|B|$ and $gA = B$. Moreover, using Item \ref{item.HB} of Lemma \ref{lem.induction.analyzed} again and the fact that $\Nm G A^g = \Nm G {gA} = \Nm G B$, we get  
        $$\tau \simeq \left(\pi'\colon \Nm G B \curvearrowright \KK_B\right) \simeq \left(\pi\colon \Nm G B \curvearrowright \HH_B\right) = \left(\pi^g\colon \Nm G A \curvearrowright \HH_A\right) \simeq \sigma^g. $$

         \item Let $\pi \colon G\to\UU(\HH)$ be a non-zero dissociated unitary representation. By Theorem \ref{thm.dissociation.implies.induced}, $\pi$ contains a non-zero subrepresentation of the form $\Ind^G_{G_{\{A\}}}(\sigma)$ for some finite $A\subseteq\Omega$ and some unitary representation $\sigma$ of $G_{\{A\}}$ that factors through $G_{\{A\}}/G_A$. Since induction preserves subrepresentations and every unitary representation of a finite group contains an irreducible subrepresentation, we may assume that $\sigma$ is irreducible. Therefore $\Ind_{G_{\{A\}}}^G(\sigma)$ is irreducible by Item \ref{item.induced.irreducible} of Lemma \ref{lem.induction.analyzed}. Finally, one concludes using Zorn's Lemma and the fact dissociation passes to subrepresentations (Lemma \ref{lem.dissociation.subrepresentation}). \qedhere
    \end{enumerate}
\end{proof}

\subsection{Obtaining dissociation via approximating sequences}\label{sec.approximating.sequence}

Let $\Omega$ be a countably infinite set and $G\leq\Sym(\Omega)$ be a closed permutation group. Let $\Omega'\subseteq\Omega$ be an infinite subset and $H\leq \Sym(\Omega')$ a closed permutation group. An \defin{extension embedding} is an embedding of topological groups $\theta \colon H\hookrightarrow G$ such that for all $x\in X$, $h\in H$, $\theta(h)(x)=h(x)$. Fix an increasing sequence $\Omega_0\subseteq\Omega_1\subseteq\dots\subseteq\Omega$ of infinite subsets with $\Omega=\bigcup_{n\geq 0}\Omega_n$, a sequence of closed permutation groups $G_n\leq\Sym(\Omega_n)$ and a sequence of extension embeddings $\theta_n \colon G_n\hookrightarrow G_{n+1}$. Notice that for each $n\geq 0$, we can naturally define an extension embedding $\iota_n\colon G_n\hookrightarrow \Sym(\Omega)$ as follows: for all $g\in G_n$ and $x\in\Omega$, set
\begin{equation}\label{eq.extension.embedding}
    \iota_n(g)(x)\coloneqq\left\{\begin{array}{cl}
    g(x) & \text{if }x\in\Omega_n,\\
    (\theta_{m-1}\circ\dots\circ\theta_n(g))(x) & \text{if }x\in\Omega_m\text{ for some }m\geq n. 
\end{array}\right.
\end{equation}
It is clear that $\iota_n$ is well-defined and that this is an extension embedding. Notice moreover that  $\iota_n(G_n)$ is a subgroup of $\iota_{n+1}(G_{n+1})$ for every $n\geq 0$. 

\begin{definition}\label{def.approx.sequence}
Let $G\leq\Sym(\Omega)$ be a closed permutation group. An \defin{approximating sequence} for $G$ is the data of an increasing sequence $\Omega_0\subseteq\Omega_1\subseteq\dots\subseteq\Omega$ of infinite subsets with $\Omega=\bigcup_{n\geq 0}\Omega_n$, a sequence of closed permutation groups $G_n\leq\Sym(\Omega_n)$ and a sequence of extension embedding $\theta_n \colon G_n\hookrightarrow G_{n+1}$ such that $\bigcup_{n\geq 0}\iota_n(G_n)$ is a dense subgroup of $G$, where $\iota_n \colon G_n\to\Sym(\Omega)$ is the extension embedding defined in \eqref{eq.extension.embedding}.\end{definition}

In the sequel,  approximating sequences will be denoted by $G_0\hookrightarrow G_1\hookrightarrow\dots \hookrightarrow G$, the extension embeddings $G_n\hookrightarrow G_{n+1}$ by $\theta_n$ and the extension embeddings $G_n\hookrightarrow\Sym(\Omega)$ by $\iota_n$. 

\begin{lemma}\label{lem.approximating.seq.stabilizers}
    Let $G\leq\Sym(\Omega)$ be a closed permutation group with an approximating sequence $G_0\hookrightarrow G_1\hookrightarrow\dots\hookrightarrow G$. Let $A\subseteq\Omega$ be a finite subset and let $N\geq 0$ be such that $A\subseteq\Omega_N$. Then the sequence $(\iota_n(G_n)_A)_{n\geq N}$ of subgroups is increasing and  $\bigcup_{n\geq N}\iota_n(G_n)_A$ is a dense subgroup of $G_A$.  
\end{lemma}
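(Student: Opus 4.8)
The plan is to reduce both assertions to two elementary observations from the setup: that the subgroups $\iota_n(G_n)$ form an increasing sequence with dense union, and that the pointwise stabilizer $G_A$ is an open (in fact clopen) subgroup of $G$. The identity underlying everything is that, for $n\geq N$ (so that $A\subseteq\Omega_n$), the pointwise stabilizer of $A$ inside $\iota_n(G_n)$ coincides with $\iota_n(G_n)\cap G_A$. Indeed, since $\iota_n(G_n)\leq G$, the condition of fixing $A$ pointwise is the same whether it is read inside $\iota_n(G_n)$ or inside $G$; and the hypothesis $A\subseteq\Omega_N$ serves precisely to make this stabilizer coincide with $\iota_n$ applied to the stabilizer of $A$ in $G_n$, so that the statement is expressed intrinsically in terms of the approximating groups.

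First I would dispose of monotonicity and the subgroup property. Combining the identity $\iota_n(G_n)_A=\iota_n(G_n)\cap G_A$ with the inclusion $\iota_n(G_n)\leq\iota_{n+1}(G_{n+1})$ already noted before Definition \ref{def.approx.sequence}, one gets $\iota_n(G_n)_A=\iota_n(G_n)\cap G_A\subseteq\iota_{n+1}(G_{n+1})\cap G_A=\iota_{n+1}(G_{n+1})_A$, so the sequence is increasing. An increasing union of subgroups is a subgroup, and since each term is contained in $G_A$, the union $\bigcup_{n\geq N}\iota_n(G_n)_A$ is a subgroup of $G_A$. Moreover, dropping the finitely many initial terms of an increasing union changes nothing, so $\bigcup_{n\geq N}\iota_n(G_n)_A=\left(\bigcup_{n\geq N}\iota_n(G_n)\right)\cap G_A=D\cap G_A$, where $D\coloneqq\bigcup_{n\geq 0}\iota_n(G_n)$ is the dense subgroup of $G$ provided by the approximating sequence.

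It remains to prove density, which is the only genuine content. I would invoke the standard topological fact that if $D$ is dense in a topological group $G$ and $U\leq G$ is an \emph{open} subgroup, then $D\cap U$ is dense in $U$: given $g\in U$ and an open neighborhood $V$ of $g$, the set $V\cap U$ is again an open neighborhood of $g$ because $U$ is open, so density of $D$ produces a point of $D$ in $V\cap U\subseteq (D\cap U)\cap V$. Applying this with $U=G_A$, which is clopen in $G$ since pointwise stabilizers of finite sets form a neighborhood basis of clopen subgroups in a closed permutation group, shows that $D\cap G_A=\bigcup_{n\geq N}\iota_n(G_n)_A$ is dense in $G_A$.

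The argument is essentially bookkeeping, and I do not expect a serious obstacle. The one point that requires care — and that I would flag as the crux — is the openness of $G_A$: without it, density of $D$ in $G$ would not in general transfer to density of $D\cap G_A$ in $G_A$, and the conclusion could fail. Everything else is a routine manipulation of increasing unions and the inclusion $\iota_n(G_n)\leq G$.
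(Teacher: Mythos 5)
Your proof is correct and follows essentially the same route as the paper: both reduce the density claim to the facts that $\bigcup_{n\geq 0}\iota_n(G_n)$ is dense in $G$ and that $G_A$ is open, the paper merely phrasing the last step with a sequence $g_k\to g$ that "eventually fixes $A$ pointwise" rather than invoking the general fact that a dense set meets every nonempty open set. Your packaging of the union as $D\cap G_A$ and the explicit appeal to openness of $G_A$ is a slightly cleaner bookkeeping of the same argument.
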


\begin{proof}
    Notice that since $A\subseteq\Omega_N$, then for every $n\geq N$ we have $\iota_n(G_n)_A=\iota_n((G_n)_A)$. Thus, the sequence $(\iota_n(G_n)_A)_{n\geq N}$ is increasing. It is clear that $\bigcup_{n\geq N}\iota_n(G_n)_A$ is a subgroup of $G_A$. Let us prove that it is dense. Fix $g\in G_A$. By density of $\bigcup_{n\geq 0}\iota_n(G_n)$ in $G$, there a sequence $(g_k)_{k\geq 0}$ of elements in $\bigcup_{n\geq 0}\iota_n(G_n)$ such that $g_k\to g$. But the sequence $(\iota_n(G_n))_{n\geq 0}$ is increasing, so the $g_k$'s belong to $\bigcup_{n\geq N}\iota_n(G_n)$. Since $g\in G_A$ and $(g_k)_{k\geq 0}$ converges pointwise to $g$, then $g_k$ fixes $A$ pointwise eventually, which finishes the proof. 
\end{proof}

The main result of this subsection is the following, which establishes the fact that dissociation is closed under taking limits of approximating sequences.

\begin{theorem}\label{thm.dissociation.through.approximation} Let $G\leq\Sym(\Omega)$ be a closed permutation group with an approximating sequence $G_0\hookrightarrow G_1\hookrightarrow\dots\hookrightarrow G$. Assume that for every $n\geq 0$, every unitary representation of the permutation group $G_n\leq\Sym(\Omega_n)$ is dissociated. Then every unitary representation of $G\leq\Sym(\Omega)$ is dissociated. 
\end{theorem}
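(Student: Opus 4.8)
The plan is to establish dissociation through its operator-theoretic characterization from Lemma \ref{lem.conditional.orthogonality}: I would fix an arbitrary continuous unitary representation $\pi \colon G \to \UU(\HH)$ and prove that $p_A p_B = p_{A\cap B}$ for every pair of finite subsets $A, B \subseteq \Omega$. The core idea is that each projection $p_A$ can be recovered as a strong limit of projections coming from the finite levels $G_n$, where dissociation holds by hypothesis; one then passes the level-$n$ identity $p_A^{(n)} p_B^{(n)} = p_{A\cap B}^{(n)}$ to the limit.

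For the approximation step, fix a finite $A \subseteq \Omega$ and $N$ with $A \subseteq \Omega_N$, and for $n \geq N$ let $p_A^{(n)}$ denote the orthogonal projection onto $\HH^{\iota_n(G_n)_A}$. By Lemma \ref{lem.approximating.seq.stabilizers} the subgroups $\iota_n(G_n)_A$ increase with $n$ and their union is dense in $G_A$; hence the invariant subspaces $\HH^{\iota_n(G_n)_A}$ form a decreasing sequence whose intersection, by continuity of $\pi$, is exactly $\HH^{G_A} = \HH_A$. The Hilbertian reverse martingale convergence theorem (Lemma \ref{lem.martingale}) then yields $p_A^{(n)} \to p_A$ in the strong operator topology.

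The finite-level input enters as follows. Since each $\iota_n \colon G_n \to G$ is a continuous embedding of topological groups, the pullback $\pi \circ \iota_n$ is a continuous unitary representation of $G_n \leq \Sym(\Omega_n)$, hence dissociated by assumption. For finite $A, B \subseteq \Omega_n$ one has $\iota_n(G_n)_A = \iota_n((G_n)_A)$, so that $\HH^{\iota_n(G_n)_A}$ is precisely the invariant subspace attached to $A$ for the representation $\pi \circ \iota_n$; dissociation of the latter therefore reads $p_A^{(n)} p_B^{(n)} = p_{A\cap B}^{(n)}$. Now, given finite $A, B \subseteq \Omega$, choose $N$ with $A \cup B \subseteq \Omega_N$, so that this identity holds for all $n \geq N$. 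Letting $n \to \infty$, the strong convergence of the three sequences of projections together with the standard fact that a product of uniformly bounded strongly convergent sequences converges strongly (all projections here have norm $\leq 1$) gives $p_A p_B = p_{A\cap B}$. By Lemma \ref{lem.conditional.orthogonality} this is exactly $\HH_A \bot_{\HH_{A\cap B}} \HH_B$, so $\pi$ is dissociated.

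I expect the main obstacle to lie in the approximation step: one must correctly identify the stabilizers $\iota_n(G_n)_A$ with $\iota_n((G_n)_A)$ across levels, verify that the associated invariant subspaces decrease precisely to $\HH_A$, and invoke the reverse martingale theorem to upgrade this to strong convergence of the projections. Once these convergences are secured, passing the algebraic dissociation identity to the limit is routine, using only that operator multiplication is jointly strongly continuous on norm-bounded sets.
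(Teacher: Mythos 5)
Your proposal is correct and follows essentially the same route as the paper's own proof: identify $\iota_n(G_n)_A$ with $\iota_n((G_n)_A)$, use Lemma \ref{lem.approximating.seq.stabilizers} together with the Hilbertian reverse martingale theorem (Lemma \ref{lem.martingale}) to get $p_A^{(n)}\to p_A$ strongly, and pass the level-$n$ identity $p_A^{(n)}p_B^{(n)}=p_{A\cap B}^{(n)}$ to the limit using that products of norm-bounded strongly convergent sequences converge strongly. No meaningful differences.
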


\begin{proof} Let $\pi \colon G\to\UU(\HH)$ be a unitary representation. Fix $A\subseteq\Omega$ finite and let $N\geq 0$ be such that $A\subseteq\Omega_N$. As usual, $\HH_A$ denotes the subspace of all vectors $\xi\in \HH$ such that $\pi(g)\xi=\xi$ for every $g\in G_A$. We denote by $p_A$ the orthogonal projection onto $\HH_A$. For $n\geq N$, we let $\HH_A^n$ be the set of all $\xi\in\HH$ such that $\pi(g)\xi=\xi$ for every $g\in \iota_n(G_n)_A$. We denote by $p_A^n$ the orthogonal projection onto $\HH_A^n$. 

\begin{claim*}
    $p_A^n\to p_A$ in the strong operator topology.
\end{claim*}

\begin{cproof}
    First,  $(\iota_n(G_n)_A)_{n\geq N}$ is increasing, so $(\HH_A^n)_{n\geq N}$ is decreasing. We claim that $\bigcap_{n\geq N}\HH_A^n=\HH_A$. We indeed have $\HH_A\subseteq\bigcap_{n\geq N}\HH_A^n$ since $\iota_n(G_n)_A$ is a subgroup of $G_A$ for every $n\geq N$. For the converse inclusion, fix $\xi\in\bigcap_{n\geq N}\HH_A^n$. Then for every $g\in\bigcup_{n\geq N}\iota_n(G_n)_A$, we have $\pi(g)\xi=\xi$. By Lemma \ref{lem.approximating.seq.stabilizers} and continuity of $\pi$, we get that $\pi(g)\xi=\xi$ for every $g\in G_A$ and thus $\xi\in\HH_A$. By the Hilbertian reverse martingale theorem (see Lemma \ref{lem.martingale}), we conclude that $p_A^n\to p_A$ in the strong operator topology. 
\end{cproof}
Fix $A,B\subseteq\Omega$ finite. We want to prove that $\HH_A\bot_{\HH_{A\cap B}}\HH_B$. For this, let us show that $p_Ap_B=p_{A\cap B}$. Fix $N\geq 0$ such that $A$ and $B$ belongs to $\Omega_N$. For every $n\geq N$, the unitary representation $\pi_n\colon G_n\overset{\iota_n}{\longrightarrow}\iota_n(G_n)\overset{\pi}{\longrightarrow} \UU(\HH)$ is continuous. Moreover, since for every $n\geq N$ we have $\iota_n(G_n)_A=\iota_n((G_n)_A)$ (and similarly for $B$ and $A\cap B$), we get that $\HH_A^n$ is the subspace of vectors that are invariant by $(G_n)_A$ (and similarly for $\HH_B^n$ and $\HH_{A\cap B}^n$). So by assumption, $\HH_A^n\bot_{\HH_{A\cap B}^n}\HH_B^n$. In other words, $p_A^np_{B}^n=p_{A\cap B}^n$. Since $||p^n_A||,||p^n_B||\leqslant 1$ for every $n\in \N$, their product also converges. Thus, $p_Ap_B=p_{A\cap B}$, which concludes the proof.
\end{proof}

\section{Rational Urysohn space and its fellows}\label{sec.Urysohn}

\subsection{Some properties of $\Isom(\U_\Delta)$}

In this section we recall some basic definitions related to variants of the rational Urysohn space. These objects are studied for instance in \cite{EGLMM}.

\begin{definition}
    A \defin{distance set} is 
    \begin{itemize}
        \item either a countable additive subsemigroup of the positive reals that contains $0$,
        \item or the intersection $\mathcal{S}\cap [0,r]$ of such a subsemigroup $\mathcal{S}$ and a bounded interval $[0,r]$ whith $r\in\mathcal{S}$.
    \end{itemize}
\end{definition}

A $\Delta$-metric space is a metric space whose metric takes its value in $\Delta$. Given a distance set $\Delta\subseteq\R_+$, the class of finite $\Delta$-metric spaces forms a Fra\"issé class (for a proof, see for instance \cite[Lem.~2.6]{EGLMM}) and we denote by $\U_\Delta$ its Fra\"issé limit. This is a $\Delta$-metric space which is called the Urysohn $\Delta$-metric space. It is the unique (up to isometry) countable $\Delta$-metric space satisfying the following two properties: 
\begin{itemize}
    \item (\emph{ultrahomogeneity}) Given any two finite subsets $A,B\subseteq \U_\Delta$ and any isometry $\varphi \colon A\to B$, there exists an isometry $\Tilde{\varphi} \colon \U_\Delta\to \U_\Delta$ which extends $\varphi$,
    \item (\emph{universality}) Every countable $\Delta$-metric space embeds isometrically into $\U_\Delta$. 
\end{itemize}

Let $\Isom(\U_\Delta)$ be the isometry group of $\U_\Delta$, which is equipped with the Polish topology of pointwise convergence ($\U_\Delta$ is here equipped with the discrete topology). 

\begin{example}
Here are some natural choices for $\Delta$.
    \begin{itemize}
        \item If $\Delta=\{0,r\}$, then $\U_\Delta$ is the countable discrete set. In this case $\Isom(\U_{\Delta})\simeq S_\infty$, the group of all permutations of a countably infinite set. 
        \item If $\Delta=\{0,r,2r\}$, then $\U_\Delta$ is the Rado graph. In fact, if we put an edge between $x,y\in\U_\Delta$ iff $d(x,y)=r$, then the structure obtained is isomorphic to the Rado graph. 
        \item If $\Delta=\Q_+$ , then $\U_\Delta$ is the rational Urysohn space $\Q\U$. 
        \item If $\Delta=\Z_+$, then $\U_\Delta$ is the integral Urysohn space $\Z\U$.
        \item If $\Delta=\Q_+\cap[0,1]$, then $\U_\Delta$ is the rational Urysohn sphere $\Q\U_1$. 
    \end{itemize}
\end{example}

We now discuss the geometrical properties of the groups $\Isom(\U_\Delta)$. These properties will not be used in the sequel, but they help situate these groups in the global picture of Polish groups. Recall that a subset $E$ of a topological group $G$ is \defin{Roelcke precompact} if for every neighborhood $U$ of the identify, there exists a finite subset $F\subseteq G$ such that $E \subseteq UFU$. The group $G$ is \defin{Roelcke precompact} if it is Roelcke precompact as a subset of itself. It is \defin{locally Roelcke precompact} if it admits a Roelcke precompact non-empty open subset. Moreover, a subset $E\subseteq G$ is \defin{coarsely bounded} if every left-invariant and continuous écart on $G$ assigns a finite diameter to $E$.

\begin{lemma}\label{lem.coarse.geometry}
    Let $\Delta\subseteq\R_+$ be a distance set. Then the following hold:
    \begin{enumerate}
        \item\label{item.loc.bounded} $\Isom(\U_\Delta)$ is locally bounded.
        \item\label{item.bounded} $\Isom(\U_\Delta)$ is coarsely bounded if and only if $\Delta$ is bounded.
        \item\label{item.loc.RP} $\Isom(\U_\Delta)$ is locally Roelcke precompact if and only if $\Delta$ is closed and discrete.
        \item\label{item.RP} $\Isom(\U_\Delta)$ is Roelcke precompact if and only if $\Delta$ is finite.
    \end{enumerate}

    \begin{proof}
        The proof of Items \ref{item.loc.bounded} and \ref{item.bounded} is contained in Theorem 6.31 and Examples 6.32 of \cite{Rosendal}. A topological group being Roelcke precompact if and only if it is both locally Roelcke precompact and coarsely bounded, Item \ref{item.RP} follows from Items \ref{item.bounded} and \ref{item.loc.RP}. 
        
        Thus, it only remains to prove Item \ref{item.loc.RP}. Let $G = \Isom(\U_\Delta)$. First, note that $\Delta$ is closed and discrete if and only if $\Delta\cap [0,M]$ is finite for every $M\geqslant 0$.
        
        Assume that there exists $M>0$ such that $\Delta\cap[0,M]$ is infinite. Let $U$ be an open neighborhood of the identity in $\Isom(\U_\Delta)$. Up to replacing $U$ with a smaller neighborhood, we can assume that $U$ is of the form $G_A$ for some finite set $A \subseteq\U_\Delta$. Let $b\in\U_\Delta$ be such that $d(b,A) \geqslant M/2$. By the Kat\v{e}tov construction and ultrahomogeneity, the distances between $b$ and elements of the $G_A$ orbit of $b$ take every value in the infinite set $\Delta\cap[0,M]$. Thus $G_{A\cup\{b\}}$ has infinitely many orbit on $G_A\cdot b$ and $G_A$ is not Roelcke precompact by \cite[Thm.~2.4]{Tsankov}.

    Conversely, assume $\Delta$ is closed and discrete. Fix $a\in\U_\Delta$ and let us show that $G_a$ is Roelcke precompact. For every $\delta\in\Delta$, define 
        \[ B(a,\delta)\coloneqq\{x\in\U_\Delta\colon d(a,x)\leq \delta\}.\]
        Let us prove that each action $G_a\curvearrowright B(a,\delta)$ is oligomorphic. Fix $\delta\in\Delta$ and recall that $\Delta\cap [0,\delta]$ is finite by assumption. For every $n\geq 1$, there are only finitely many isometric types of metric spaces of the form $(a,x_1,\dots,x_n)$ with $x_1,\dots,x_n\in B(a,\delta)$. Moreover, by ultrahomogeneity of $\U_\Delta$, any two such finite metric spaces which are isometric, are in the same $G_a$-orbit. Thus $G_a\curvearrowright B(a,\delta)$ is oligomorphic. Finally, $G_a$ is an inverse limit of oligomorphic groups and therefore is Roelcke precompact by \cite[Thm.~2.4]{Tsankov}. \qedhere
        
        \end{proof}

\end{lemma}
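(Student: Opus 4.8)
The plan is to dispatch Items \ref{item.loc.bounded} and \ref{item.bounded} by appealing directly to Rosendal's computations of the large-scale geometry of isometry groups of Urysohn-type spaces (Theorem 6.31 and Examples 6.32 of \cite{Rosendal}), which already record that $\Isom(\U_\Delta)$ is locally bounded and that it is coarsely bounded precisely when $\Delta$ is bounded. Item \ref{item.RP} then requires no separate argument: since a topological group is Roelcke precompact if and only if it is simultaneously locally Roelcke precompact and coarsely bounded, and since a distance set $\Delta$ is finite exactly when it is both bounded and closed and discrete, Item \ref{item.RP} follows by combining Items \ref{item.bounded} and \ref{item.loc.RP}. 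Thus the whole content lies in Item \ref{item.loc.RP}, and I would first reformulate the hypothesis: $\Delta$ is closed and discrete if and only if $\Delta\cap[0,M]$ is finite for every $M\geq 0$. I would also use throughout the standard reduction that, for a closed permutation group, local Roelcke precompactness is equivalent to the existence of a Roelcke precompact open subgroup, so it suffices to analyze the basic open subgroups $G_a$ for sufficiency and $G_A$ for necessity (recalling that any open subgroup contains some $G_A$, and that open subgroups of Roelcke precompact groups are Roelcke precompact).

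For the sufficiency in Item \ref{item.loc.RP}, assume $\Delta$ is closed and discrete and fix a point $a\in\U_\Delta$; I would show that the open subgroup $G_a$ is Roelcke precompact. For each $\delta\in\Delta$ consider the ball $B(a,\delta)=\{x\in\U_\Delta\colon d(a,x)\leq\delta\}$. Because $\Delta\cap[0,\delta]$ is finite, for every $n$ there are only finitely many isometry types of metric spaces $(a,x_1,\dots,x_n)$ with all $x_i\in B(a,\delta)$; ultrahomogeneity of $\U_\Delta$ then forces isometric tuples to lie in the same $G_a$-orbit, so the action $G_a\curvearrowright B(a,\delta)$ is oligomorphic. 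As $\delta$ ranges over $\Delta$ the balls exhaust $\U_\Delta$, exhibiting $G_a$ as an inverse limit of the oligomorphic groups induced on the $B(a,\delta)$; since an inverse limit of oligomorphic (hence Roelcke precompact) groups is Roelcke precompact by \cite[Thm.~2.4]{Tsankov}, this produces the required Roelcke precompact open subgroup.

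For the necessity I would argue by contraposition: suppose $\Delta$ is not closed and discrete, so $\Delta\cap[0,M]$ is infinite for some $M>0$, and show that \emph{every} basic open subgroup $G_A$ (with $A\subseteq\U_\Delta$ finite) fails to be Roelcke precompact. Using universality, or equivalently the Kat\v{e}tov construction, I would first produce a point $b\in\U_\Delta$ with $d(b,A)\geq M/2$. The key step is then to check, again via the Kat\v{e}tov construction together with ultrahomogeneity, that the distances $d(b,x)$, as $x$ ranges over the $G_A$-orbit of $b$, realize every value in the infinite set $\Delta\cap[0,M]$. Consequently the stabilizer $G_{A\cup\{b\}}=(G_A)_b$ has infinitely many orbits on $G_A\cdot b$, so $G_A$ is not Roelcke precompact by \cite[Thm.~2.4]{Tsankov}. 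As this holds for every finite $A$, no open subgroup is Roelcke precompact, and hence $\Isom(\U_\Delta)$ is not locally Roelcke precompact.

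The routine parts (invoking \cite{Rosendal} for Items \ref{item.loc.bounded}--\ref{item.bounded}, the Boolean combination giving Item \ref{item.RP}, and the verification that finitely many isometry types yield oligomorphy) are short. The main obstacle is the distance-realization claim in the necessity direction: one must ensure that a single $G_A$-orbit of a sufficiently distant point $b$ already witnesses infinitely many distances back to $b$. This is exactly where the Kat\v{e}tov extension property is needed, namely to build, for each target value $\delta\in\Delta\cap[0,M]$, an isometric configuration $A\cup\{b,b'\}$ in which $b'$ has the same distances to $A$ as $b$ (so that some element of $G_A$ sends $b$ to $b'$ by ultrahomogeneity) while $d(b,b')=\delta$. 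Choosing $b$ with $d(b,A)\geq M/2$ is precisely what keeps all these configurations compatible with the triangle inequality, so that they are genuinely realizable inside $\U_\Delta$; verifying this compatibility is the delicate point.
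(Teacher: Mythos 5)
Your proposal is correct and follows essentially the same route as the paper's proof: Items \ref{item.loc.bounded} and \ref{item.bounded} via Rosendal, Item \ref{item.RP} as a Boolean combination of Items \ref{item.bounded} and \ref{item.loc.RP}, sufficiency in Item \ref{item.loc.RP} by showing $G_a$ is an inverse limit of the oligomorphic groups acting on the balls $B(a,\delta)$, and necessity by using the Kat\v{e}tov construction to realize every distance in $\Delta\cap[0,M]$ between $b$ and its $G_A$-orbit, then invoking \cite[Thm.~2.4]{Tsankov}. You even flag and sketch the resolution of the one genuinely delicate point (triangle-inequality compatibility of the configurations $A\cup\{b,b'\}$, guaranteed by $d(b,A)\geq M/2$) that the paper passes over more quickly.
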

\begin{definition}
    Let $G\leq\Sym(\Omega)$ be  closed permutation group. We say that $G$ has \defin{weak elimination of imaginaries} if for every open subgroup $V\leq G$, there exists a finite subset $A\subseteq\Omega$ such that $G_A\leq V$ and $[V:G_A]<+\infty$. 
\end{definition}

Recall the following characterization obtained in \cite[Lem.~3.6]{jahel2023stabilizers}.

\begin{lemma}\label{lem.NoAlg.WEI}
    Let $G\leq\Sym(\Omega)$ be a closed permutation group. Then the following are equivalent. 
    \begin{enumerate}
        \item $G$ has no algebraicity and admits weak elimination of imaginaries.
        \item $G$ acts on $\Omega$ without fixed point and for all finite subsets $A,B\subseteq\Omega$, we have $\langle G_A,G_B\rangle=G_{A\cap B}$.
    \end{enumerate} 
\end{lemma}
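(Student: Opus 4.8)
The plan is to prove the two implications separately, relying throughout on the following reformulation of no algebraicity: for a closed permutation group $H\leq\Sym(\Omega)$ and a finite set $A$, the action $H_A\curvearrowright\Omega\setminus A$ has no finite orbit precisely when every point $\omega\notin A$ has infinite $H_A$-orbit (so in particular $\Fix(H_A)=A$). The other recurring ingredient is orbit--stabilizer bookkeeping: $[G_A:G_{A\cup C}]$ equals the size of the $G_A$-orbit of the tuple enumerating $C\setminus A$, so this index is finite if and only if every point of $C\setminus A$ has finite $G_A$-orbit, which under no algebraicity forces $C\subseteq A$. Since the inclusion $\langle G_A,G_B\rangle\subseteq G_{A\cap B}$ always holds, in both directions the content lies in the reverse inclusion and in matching it against weak elimination of imaginaries.

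For $(1)\Rightarrow(2)$, the absence of fixed points is immediate: applying no algebraicity with $A=\emptyset$ shows $G=G_\emptyset$ has no singleton orbit. For the generation equality, fix finite $A,B$ and set $V=\langle G_A,G_B\rangle$, an open subgroup contained in $G_{A\cap B}$. Weak elimination of imaginaries applied to $V$ produces a finite $C$ with $G_C\leq V$ and $[V:G_C]<\infty$, and I would then squeeze $C$ from both sides. If some $\omega\in(A\cap B)\setminus C$ existed, no algebraicity would give $g\in G_C$ moving $\omega$, contradicting $G_C\leq V\leq G_{A\cap B}$; hence $A\cap B\subseteq C$. Conversely, since $G_A\leq V$ and $[V:G_C]<\infty$, the standard bound $[G_A:G_A\cap G_C]\leq[V:G_C]$ together with $G_A\cap G_C=G_{A\cup C}$ and the orbit--stabilizer remark forces $C\subseteq A$, and symmetrically $C\subseteq B$. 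Thus $C=A\cap B$ and $G_{A\cap B}=G_C\leq V\leq G_{A\cap B}$, giving $V=G_{A\cap B}$.

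For $(2)\Rightarrow(1)$, I would first establish no algebraicity by contradiction. If a point $\omega\notin A$ had a finite $G_A$-orbit $O=\{\omega_1,\dots,\omega_k\}$ with $\omega=\omega_1$, I enlarge $A$ to $A'=A\cup\{\omega_2,\dots,\omega_k\}$: then $G_{A'}$ preserves $O$ and fixes all of $O$ except possibly $\omega_1$, hence fixes $\omega_1$ as well, reducing to the case $k=1$ of a point $\omega\notin A'$ fixed by $G_{A'}$. In that case $G_{A'}\leq G_{\{\omega\}}$ (the stabilizer of the point $\omega$), so the generation property applied to the disjoint sets $A'$ and $\{\omega\}$ gives $G=\langle G_{A'},G_{\{\omega\}}\rangle=G_{\{\omega\}}$, i.e.\ $\omega$ is a global fixed point, contradicting the hypothesis. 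For weak elimination of imaginaries, let $V$ be an open subgroup; it contains some $G_{F_0}$, so the family $\mathcal A=\{F\subseteq\Omega\text{ finite}:G_F\leq V\}$ is nonempty. The generation property makes $\mathcal A$ closed under intersection, since $G_A,G_B\leq V$ yields $G_{A\cap B}=\langle G_A,G_B\rangle\leq V$; choosing $A^\ast\in\mathcal A$ of minimal cardinality then produces a least element, $A^\ast\subseteq F$ for all $F\in\mathcal A$. Finally, for $v\in V$ one has $G_{vA^\ast}=vG_{A^\ast}v^{-1}\leq V$, so $vA^\ast\in\mathcal A$ and minimality forces $A^\ast\subseteq vA^\ast$, whence $vA^\ast=A^\ast$. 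Thus $V$ stabilizes $A^\ast$ setwise, $V/G_{A^\ast}$ embeds into $\Sym(A^\ast)$, and $[V:G_{A^\ast}]<\infty$.

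The main obstacle is the weak-elimination half of $(2)\Rightarrow(1)$: one must pin down the correct finite set and the correct finiteness mechanism. The generation property is exactly what makes $\mathcal A$ an intersection-closed family with a least element $A^\ast$, and the conjugation argument showing $V$ preserves $A^\ast$ setwise is the crux that converts minimality into a finite index. The no-algebraicity half is delicate only in that finite orbits of size $k\geq2$ are not directly contradicted by the fixed-point hypothesis; the reduction that absorbs $k-1$ points of the orbit into the parameter set, turning a finite orbit into a genuine fixed point, is the key maneuver there.
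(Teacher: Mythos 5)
Your proof is correct. Note that the paper does not prove this lemma at all: it is quoted verbatim from Lemma~3.6 of the authors' earlier work \cite{jahel2023stabilizers}, so there is no in-paper argument to compare against. Your reconstruction is complete and is the standard one: for $(1)\Rightarrow(2)$ the orbit--stabilizer computation $[G_A:G_{A\cup C}]<\infty$ forcing $C\subseteq A$ under no algebraicity (in fact you only need $C\subseteq A\cap B$; the inclusion $A\cap B\subseteq C$ is a harmless redundancy), and for $(2)\Rightarrow(1)$ the two key moves --- absorbing all but one point of a finite orbit into the parameter set to manufacture a genuine fixed point, and exhibiting a least element $A^\ast$ of the intersection-closed family $\{F \text{ finite}: G_F\leq V\}$ which is then setwise $V$-invariant by conjugation, giving $[V:G_{A^\ast}]\leq |A^\ast|!$ --- are exactly the mechanisms one expects and they are carried out correctly.
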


Slutsky essentially proved that these conditions are satisfied for the isometry groups of Urysohn spaces. See \cite[Thm.~4.12, Thm.~4.16 and Cor.~4.17]{Slutsky}, from which we extract the following.

\begin{theorem}[Slutsky, \cite{Slutsky}]\label{thm.Slutsky}
    Let $\Delta\subseteq\R_+$ be a distance set and let $G=\Isom(\U_\Delta)$. Then for all $A,B\subseteq\U_\Delta$ finite, we have $\langle G_A,G_B\rangle = G_{A\cap B}$. 
\end{theorem}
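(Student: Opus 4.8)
The plan is to prove the two inclusions separately, the hard one being $G_{A\cap B}\subseteq\langle G_A,G_B\rangle$. Write $C=A\cap B$. The inclusion $\langle G_A,G_B\rangle\subseteq G_C$ is immediate, since any isometry fixing $A$ (or $B$) pointwise fixes $C$ pointwise. For the reverse inclusion I would first reduce to a density statement: the subgroup $\langle G_A,G_B\rangle$ contains the open subgroup $G_A$, hence is itself open and therefore closed in $G$. Consequently it is a closed subgroup of $G_C$, so it coincides with $G_C$ as soon as it is dense in $G_C$. By ultrahomogeneity of $\U_\Delta$, density amounts to the following: for every $g\in G_C$ and every finite $F\subseteq\U_\Delta$ there is $h\in\langle G_A,G_B\rangle$ agreeing with $g$ on $F$; equivalently, every finite partial isometry fixing $C$ pointwise can be realized by an element of $\langle G_A,G_B\rangle$.

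The engine for realizing such partial isometries is the \emph{stationary independence relation} on $\U_\Delta$ given by free metric amalgamation: for tuples $u,w$ over a common finite set $C$, declare $u\indep_C w$ when the distances between $u\setminus C$ and $w\setminus C$ are the maximal ones compatible with the triangle inequality through $C$, i.e.\ $d(x,y)=\min_{c\in C}\big(d(x,c)+d(c,y)\big)$. This relation is symmetric and stationary: two tuples with the same type over $C$ that are both independent over $C$ from a fixed set have the same type over that set. Writing $A=C\sqcup A'$ and $B=C\sqcup B'$, the key factorization lemma is then: if $u,w$ satisfy $\tp(u\mid C)=\tp(w\mid C)$ and both $u\indep_C A'$ and $w\indep_C A'$, then stationarity gives $\tp(u\mid A)=\tp(w\mid A)$, so ultrahomogeneity provides $a\in G_A$ with $a(u)=w$; the symmetric statement holds for $B$. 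In particular, if the relevant parts of $F$ and $g(F)$ are already in independent position over $C$ relative to $A'$ (or $B'$), a single element of $G_A$ (or $G_B$) realizes $g$ on $F$.

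The heart of the argument — and the step I expect to be the main obstacle — is the reduction of the general case to this independent situation. A given finite set $F$ is of course not independent over $C$ from $A'$ or $B'$, and, crucially, an element of $G_A$ cannot alter the distances of a tuple to $A'$ while an element of $G_B$ cannot alter its distances to $B'$. One must therefore route the motion $F\rightsquigarrow g(F)$ through a bounded chain of auxiliary tuples, each produced by the existence axiom as an independent copy in a sufficiently generic position, alternating $G_A$- and $G_B$-moves so that successive tuples are independent over $C$ from $A'$ (respectively $B'$) and the factorization lemma applies at each step. Making this chain exist requires checking that the intermediate ``mixed'' configurations are realizable, and it is exactly here that the combinatorics of the distance set $\Delta$ enters: one must verify that the freely amalgamated distances again lie in $\Delta$ — in the bounded case, after capping at $\max\Delta$ — and that the resulting finite metric spaces satisfy the triangle inequality, so that they embed in $\U_\Delta$. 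This metric bookkeeping, together with the explicit construction of the connecting chain, is precisely the content of Slutsky's analysis in \cite[Thm.~4.12, Thm.~4.16, Cor.~4.17]{Slutsky}, which I would invoke to conclude.

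An alternative, more model-theoretic packaging would be to establish directly that $\Isom(\U_\Delta)$ has no algebraicity — which is immediate, since the $G_A$-orbit of any point outside $A$ consists of all points realizing the same Kat\v{e}tov function over $A$, of which there are infinitely many — and that it weakly eliminates imaginaries, and then to quote Lemma \ref{lem.NoAlg.WEI}. However, establishing weak elimination of imaginaries is essentially equivalent to the generation statement itself, so the real work remains the independence and amalgamation analysis described above.
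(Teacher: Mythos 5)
Your proposal is correct and follows essentially the same route as the paper: the paper likewise imports Slutsky's density statement (\cite[Thm.~4.12, Thm.~4.16, Cor.~4.17]{Slutsky}) and upgrades density to equality in the remark following the theorem by exactly your observation that $\langle G_A,G_B\rangle$ contains the open subgroup $G_A$, hence is open and therefore closed. Your additional sketch of the stationary-independence mechanism behind Slutsky's density argument is accurate but, as you note, the substantive work is deferred to the citation in both treatments.
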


\begin{remark}
    Slutsky's original result \cite[Cor.~4.17]{Slutsky} (see also \cite[Thm.~7.7]{EGLMM}) is stated by saying that $\langle G_A,G_B\rangle$ is dense in $G_{A\cap B}$. This is indeed what he proves. However, the subgroup $\langle G_A,G_B\rangle$ being open, it is also closed and the equality holds.
\end{remark}

Combining Lemma \ref{lem.NoAlg.WEI} and Theorem \ref{thm.Slutsky}, we obtain the following result.

\begin{lemma}\label{lem.Isom(UDelta).model.theory}
    Let $\Delta\subseteq\R_+$ be a distance set. Then $\Isom(\U_\Delta)$ has no algebraicity and admits weak elimination of imaginaries. 
\end{lemma}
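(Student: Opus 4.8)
The plan is to deduce the statement directly from the characterization recorded in Lemma \ref{lem.NoAlg.WEI}. That lemma asserts the equivalence of condition (1)---that $G$ has no algebraicity and admits weak elimination of imaginaries---with condition (2), which itself splits into two parts: that $G$ acts on $\U_\Delta$ without a global fixed point, and that $\langle G_A, G_B\rangle = G_{A\cap B}$ for all finite $A, B \subseteq \U_\Delta$. So the entire argument reduces to verifying the two parts of condition (2) for $G = \Isom(\U_\Delta)$.

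The second part is exactly the content of Theorem \ref{thm.Slutsky} (Slutsky), so it can be quoted verbatim. For the first part, I would argue that the action $G \curvearrowright \U_\Delta$ is transitive, hence has no global fixed point, using ultrahomogeneity: $\U_\Delta$ is infinite by universality, and given any two points $x, y \in \U_\Delta$ the singleton map sending $x$ to $y$ is a partial isometry between one-point subspaces (there is no distance constraint to satisfy), so by ultrahomogeneity it extends to an isometry $\tilde\varphi \in G$ with $\tilde\varphi(x) = y$. In particular, for each $x$ there is some $g \in G$ with $g(x) \neq x$, which is precisely the no-fixed-point requirement.

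With both parts of condition (2) in hand, Lemma \ref{lem.NoAlg.WEI} immediately delivers condition (1), which is the desired conclusion. I do not anticipate any genuine obstacle: the substantive input is Slutsky's theorem, already available as Theorem \ref{thm.Slutsky}, and the only elementary check---absence of a global fixed point---is a one-line consequence of ultrahomogeneity. The proof is thus a short bookkeeping step combining the two previously established results, and I would keep it to a few lines.
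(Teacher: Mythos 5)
Your proposal is correct and matches the paper's own proof, which likewise obtains the lemma by combining the characterization in Lemma \ref{lem.NoAlg.WEI} with Slutsky's Theorem \ref{thm.Slutsky}. The only addition is your explicit (and correct) one-line check via ultrahomogeneity that the action has no fixed point, which the paper leaves implicit.
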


\subsection{The Kat\v{e}tov construction and approximation}
We explain now how to build $\U_\Delta$ using the standard construction due to Kat{\v e}tov \cite{Katetov}. Let $\Delta$ be a distance set and $X$ be a countable $\Delta$-metric space. A $\Delta$-valued Kat{\v e}tov function on $X$ is a map $f \colon X\to \Delta$ satisfying 
\begin{align}
    \forall x,y\in X, \lvert f(x)-f(y)\rvert \leq d(x,y)\leq f(x)+f(y).
\end{align}
The Urysohn $\Delta$-metric space $\U_\Delta$ is characterized by the so-called \emph{Urysohn property}:

\begin{lemma}\label{Lem.Katetov}
    The Urysohn $\Delta$-metric space $\U_\Delta$ is, up to isometry, the unique countable $\Delta$-metric space $X$ with the $\Delta$-Urysohn property, i.e. such that for every finite subset $Y\subseteq X$ and every $\Delta$-valued Kat\v{e}tov function $f\colon Y\to\Delta$, there exists $x\in X$ such that $f=d(x,\cdot)$.
\end{lemma}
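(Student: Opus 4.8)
The plan is to prove the two halves of the statement separately: first that the Fra\"iss\'e limit $\U_\Delta$ itself satisfies the $\Delta$-Urysohn property, and then that this property characterizes $\U_\Delta$ up to isometry among countable $\Delta$-metric spaces.

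For the existence half, I would start from a finite subset $Y\subseteq\U_\Delta$ together with a $\Delta$-valued Kat\v{e}tov function $f\colon Y\to\Delta$. The Kat\v{e}tov inequalities are precisely the triangle inequalities needed to turn $Y\cup\{*\}$ into a finite $\Delta$-metric space in which the adjoined point $*$ satisfies $d(*,y)=f(y)$ for all $y\in Y$: for $y,y'\in Y$ the conditions $\lvert f(y)-f(y')\rvert\le d(y,y')\le f(y)+f(y')$ are exactly what is required, and all the distances involved lie in $\Delta$. By universality this abstract finite space embeds isometrically into $\U_\Delta$ via some map $\varphi$. Then $\varphi|_Y\colon Y\to\varphi(Y)$ is an isometry between finite subsets of $\U_\Delta$, so ultrahomogeneity extends it to an isometry $\tilde\varphi$ of $\U_\Delta$ with $\tilde\varphi(y)=\varphi(y)$ for $y\in Y$. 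Setting $x\coloneqq\tilde\varphi^{-1}(\varphi(*))$ yields $d(x,y)=d(\varphi(*),\tilde\varphi(y))=d(\varphi(*),\varphi(y))=f(y)$ for every $y\in Y$, as desired.

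For the uniqueness half, let $X$ be any countable $\Delta$-metric space with the $\Delta$-Urysohn property; since $\U_\Delta$ also has it by the previous paragraph, I would build a bijective isometry $X\to\U_\Delta$ by a back-and-forth argument. After fixing enumerations $X=\{x_n\}_{n\ge 0}$ and $\U_\Delta=\{u_n\}_{n\ge 0}$, one constructs an increasing chain of finite partial isometries. The observation driving each step is that the distance function $d(z,\cdot)$ restricted to any finite set is automatically Kat\v{e}tov: to extend a partial isometry $\varphi$ so as to include a new point $z$ in its domain, transport $d(z,\cdot)$ through $\varphi$ to a Kat\v{e}tov function on the finite set $\varphi(\dom\varphi)$ and apply the $\Delta$-Urysohn property of $\U_\Delta$ to realize it by a point; symmetrically, to force a prescribed $u_n$ into the range, apply the $\Delta$-Urysohn property of $X$. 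Alternating forth and back steps according to the enumerations produces a bijective isometry, whence $X\cong\U_\Delta$.

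The genuinely load-bearing point — the only thing that must actually be checked — is the translation between Kat\v{e}tov functions and one-point metric extensions, namely that adjoining a point with distances prescribed by a Kat\v{e}tov function always yields a legitimate $\Delta$-metric space and, conversely, that every distance function restricts to a Kat\v{e}tov function; everything else reduces to the defining properties of the Fra\"iss\'e limit recalled above. The main obstacle, to the extent there is one, is purely bookkeeping: keeping the two applications of the Urysohn property (one for $\U_\Delta$, one for $X$) correctly oriented in the forth and back steps so that the limiting map is simultaneously an isometry and a bijection.
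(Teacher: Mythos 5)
The paper states this lemma without proof (it is the classical characterization of the Fra\"iss\'e limit via one-point extensions), so there is no in-paper argument to compare against; your proof is the standard one and is correct. The existence half is right: the Kat\v{e}tov inequalities are exactly the triangle inequalities for the one-point extension $Y\cup\{*\}$, universality embeds that finite $\Delta$-metric space into $\U_\Delta$, and ultrahomogeneity lets you pull the image of $Y$ back onto $Y$; the back-and-forth for uniqueness is likewise standard, with the key (and correctly identified) point being that $d(z,\cdot)$ restricted to a finite set is Kat\v{e}tov and that this is preserved under transport by a partial isometry. The only detail worth tidying is the degenerate case where $f(y_0)=0$ for some $y_0\in Y$: there $Y\cup\{*\}$ is only a pseudometric space, so the embedding step does not literally apply, but the Kat\v{e}tov inequalities then force $f=d(y_0,\cdot)$ on all of $Y$, so $x=y_0$ realizes $f$ directly.
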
  
Let $E_\Delta(X,\omega)$ be the set of $\Delta$-valued Kat{\v e}tov functions $f \colon X\to \Delta$ such that there exists a finite subset $F\subseteq X$ satisfying
\[\forall x\in X, f(x)=\underset{y\in F}{\min}(f(y)+d(x,y)).\]
Since $X$ is countable, so is $E_\Delta(X,\omega)$. We equip it with the uniform metric $d$ defined for $f,g\in E_\Delta(X,\omega)$ by $d(f,g)=\sup\{\lvert f(x)-g(x)\rvert, x\in X\}$. Then every isometry of $X$ extends uniquely to an isometry of $E(X,\omega)$ in such a way that the extension morphism $\Isom(X)\to\Isom(E(X,\omega))$ is an embedding of topological groups \cite[Prop.~2.5]{Melleray}. Here $\Isom(X)$ and $\Isom(E(X,\omega))$ are equipped with their respective permutation topology.

To construct $\U_\Delta$, start with the empty space $X_0$ and define inductively $X_{n+1}=E_{\Delta}(X_n,\omega)$. Identify $X_{n}$ as an isometric subspace of $X_{n+1}$ via the isometry $X\to E(X,\omega)$ given by $x\mapsto d(x,\cdot)$ and let $X=\bigcup_{n\geq 0}X_n$. Then $X$ is a countable $\Delta$-metric space which satisfies the Urysohn property by construction. So $X$ is the Urysohn $\Delta$-metric space $\U_\Delta$.

The following lemma is a direct consequence of the Kat{\v e}tov construction we just explained. It is also a straightforward application of a very general theorem due to Müller on Fraïssé structures with a stationary independence relation, see \cite{Muller}. 

\begin{lemma}\label{Lem.Ursyohn.embeddings}
    Let $\Delta,\Lambda\subseteq\R_+$ be two  distance sets such that $\Delta\subseteq\Lambda$. Then there exists an isometric embedding $f\colon \U_{\Delta}\to\U_{\Lambda}$ such that every isometry of $f(\U_\Delta)$ extends to an isometry of $\U_\Lambda$ in such a way that this extension yields an embedding $\Isom(\U_\Delta)\to \Isom(\U_\Lambda)$ of topological groups. 
\end{lemma}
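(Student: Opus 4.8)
The plan is to realize $\U_\Lambda$ as a Katětov tower built on top of $\U_\Delta$, and then to propagate isometries up that tower using the functoriality of the Katětov construction recorded just before the statement.

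First I would exploit that, since $\Delta\subseteq\Lambda$, the space $\U_\Delta$ is in particular a countable $\Lambda$-metric space. Set $Z_0\coloneqq\U_\Delta$ and define inductively $Z_{k+1}\coloneqq E_\Lambda(Z_k,\omega)$, identifying $Z_k$ with its isometric image in $Z_{k+1}$ via $z\mapsto d(z,\cdot)$, and put $W\coloneqq\bigcup_{k\geq 0}Z_k$. This is a countable $\Lambda$-metric space, and I claim it has the $\Lambda$-Urysohn property: any finite $Y\subseteq W$ lies in some $Z_k$, and a $\Lambda$-valued Katětov function $f$ on $Y$ has finite support $Y$, so its min-extension $\hat f\in E_\Lambda(Z_k,\omega)=Z_{k+1}$ is a point realizing $f$ on $Y$. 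By Lemma \ref{Lem.Katetov}, $W$ is isometric to $\U_\Lambda$, and the inclusion $f\colon\U_\Delta=Z_0\hookrightarrow W\cong\U_\Lambda$ is the desired isometric embedding.

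Next, for the extension of isometries I would invoke \cite[Prop.~2.5]{Melleray}: for each $k$, every isometry of $Z_k$ extends uniquely to an isometry of $Z_{k+1}=E_\Lambda(Z_k,\omega)$, and the induced map $\Isom(Z_k)\to\Isom(Z_{k+1})$ is an embedding of topological groups. Starting from $g\in\Isom(\U_\Delta)=\Isom(Z_0)$, let $g_k\in\Isom(Z_k)$ be the iterated extension; by uniqueness $g_{k+1}$ restricts to $g_k$ on $Z_k$, so the $g_k$ glue to an isometry $\tilde g\coloneqq\bigcup_k g_k$ of $W\cong\U_\Lambda$. This defines $\Phi\colon\Isom(\U_\Delta)\to\Isom(\U_\Lambda)$, $g\mapsto\tilde g$. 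Since each level map is a homomorphism and $\tilde g|_{Z_k}=g_k$ for all $k$, the identity $\Phi(gh)=\Phi(g)\Phi(h)$ is checked level by level, so $\Phi$ is a homomorphism; injectivity is immediate from $\tilde g|_{Z_0}=g$.

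Finally I would verify that $\Phi$ is a topological embedding. Continuity holds because, for a finite $F\subseteq\U_\Lambda$ contained in some $Z_k$, the preimage $\Phi^{-1}(\Isom(\U_\Lambda)_F)$ equals the preimage of the open pointwise stabilizer $\Isom(Z_k)_F$ under the continuous composite $\Isom(Z_0)\to\Isom(Z_k)$. For the embedding property it suffices to note that pointwise stabilizers of finite $A\subseteq\U_\Delta$ match exactly: as $\tilde g$ restricts to $g$ on $Z_0$, one has $g\in\Isom(\U_\Delta)_A$ if and only if $\Phi(g)\in\Isom(\U_\Lambda)_{f(A)}$, so $\Phi$ carries a neighborhood basis of the identity onto its relative counterpart in the image. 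I expect the only genuinely delicate points to be checking that the tower over the non-empty starting space $\U_\Delta$ still satisfies the full Urysohn property (so that $W\cong\U_\Lambda$) and keeping the identifications $Z_k\hookrightarrow Z_{k+1}$ compatible with the action of isometries; everything else is bookkeeping. As the statement indicates, the same conclusion can alternatively be extracted from Müller's theorem on Fraïssé structures with a stationary independence relation \cite{Muller}.
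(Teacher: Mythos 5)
Your proposal is correct and follows exactly the route the paper intends: the paper's ``proof'' is the single remark that the lemma is a direct consequence of the Kat\v{e}tov construction (or alternatively of M\"uller's theorem), and you have simply fleshed out that construction --- building the $\Lambda$-valued Kat\v{e}tov tower over $Z_0=\U_\Delta$, identifying the union with $\U_\Lambda$ via Lemma \ref{Lem.Katetov}, and propagating isometries level by level using \cite[Prop.~2.5]{Melleray} before checking the topological embedding on pointwise stabilizers. The only point you gloss over at the same level as the paper is that for a \emph{bounded} distance set $\Lambda=\mathcal{S}\cap[0,r]$ the min-extension must be truncated at $r$ to stay $\Lambda$-valued, but this is a standard adjustment already implicit in the paper's definition of $E_\Lambda(X,\omega)$.
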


We obtain the following approximation property for $\U_\Delta$. 

\begin{corollary}\label{cor.approximation}
    Let $\Delta$ be a distance set and $(\Delta_n)_{n\geq 0}$ an increasing sequence of distance sets such that $\bigcup_{n\geq 0}\Delta_n=\Delta$.
    Then there exists an approximating sequence $G_0\hookrightarrow G_1\hookrightarrow\dots\hookrightarrow\Isom(\U_\Delta)$ such that for every $n\geq 0$, the metric space $\Omega_n\subseteq\U_\Delta$ is isometric to $\U_{\Delta_n}$ and $G_n=\Isom(\Omega_n)$. 
\end{corollary}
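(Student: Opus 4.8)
The plan is to realize $\U_\Delta$ as an increasing union of copies of the spaces $\U_{\Delta_n}$, glued together along the embeddings furnished by Lemma \ref{Lem.Ursyohn.embeddings}, and then to take $\Omega_n$ to be the $n$-th copy with $G_n=\Isom(\Omega_n)$. Concretely, I would first apply Lemma \ref{Lem.Ursyohn.embeddings} to each inclusion $\Delta_n\subseteq\Delta_{n+1}$ to obtain an isometric embedding $\U_{\Delta_n}\hookrightarrow\U_{\Delta_{n+1}}$ together with an associated embedding of topological groups $\theta_n\colon\Isom(\U_{\Delta_n})\to\Isom(\U_{\Delta_{n+1}})$, each $\theta_n(h)$ restricting to $h$. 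Identifying every $\U_{\Delta_n}$ with its image, I form the increasing union $\Omega=\bigcup_{n\geq 0}\Omega_n$, where $\Omega_n$ denotes the copy of $\U_{\Delta_n}$. By construction $\Omega_n\subseteq\Omega_{n+1}$, each $\Omega_n$ is isometric to $\U_{\Delta_n}$, and $\Omega$ is a countable metric space whose distances lie in $\bigcup_n\Delta_n=\Delta$. (Discarding finitely many initial terms, one may assume every $\Delta_n$ contains a nonzero element, so that each $\Omega_n$ is infinite.)

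The crux is to check that $\Omega$ is isometric to $\U_\Delta$, which I would verify through the Urysohn property of Lemma \ref{Lem.Katetov}. Let $Y\subseteq\Omega$ be finite and let $f\colon Y\to\Delta$ be a $\Delta$-valued Kat\v{e}tov function. Since $Y$ is finite and the $\Omega_n$ are increasing with union $\Omega$, there is $N$ with $Y\subseteq\Omega_N$; and since $f(Y)$ is a finite subset of $\Delta=\bigcup_m\Delta_m$, there is $M\geq N$ with $f(Y)\subseteq\Delta_M$. The Kat\v{e}tov inequalities are absolute, involving only $f$ and the distances within $Y$, which all lie in $\Delta_M$; hence $f$ is a $\Delta_M$-valued Kat\v{e}tov function on the finite subset $Y$ of $\Omega_M\cong\U_{\Delta_M}$. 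The $\Delta_M$-Urysohn property then yields a point $x\in\Omega_M\subseteq\Omega$ realizing $f$, i.e.\ $d(x,y)=f(y)$ for all $y\in Y$. Thus $\Omega$ enjoys the $\Delta$-Urysohn property, so $\Omega\cong\U_\Delta$ by Lemma \ref{Lem.Katetov}; identifying $\U_\Delta=\Omega$, the subspaces $\Omega_n$ sit inside $\U_\Delta$ exactly as required.

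It then remains to take $G_n=\Isom(\Omega_n)$ with the maps $\theta_n$ as above: these are extension embeddings in the sense preceding Definition \ref{def.approx.sequence}, since each $\theta_n(h)$ restricts to $h$ on $\Omega_n$. The maps $\iota_n\colon G_n\to\Sym(\Omega)$ of \eqref{eq.extension.embedding} take values in $\Isom(\U_\Delta)$, because $\iota_n(h)$ restricts on each $\Omega_m$ (for $m\geq n$) to the isometry $\theta_{m-1}\circ\cdots\circ\theta_n(h)$ and these restrictions are compatible. For the density clause I would argue as follows: given $g\in\Isom(\U_\Delta)$ and a finite $F\subseteq\U_\Delta$, choose $N$ with $F\cup g(F)\subseteq\Omega_N$; then $g|_F$ is a partial isometry of the metric space $\Omega_N$, which by ultrahomogeneity of $\U_{\Delta_N}\cong\Omega_N$ extends to some $h\in G_N=\Isom(\Omega_N)$, and $\iota_N(h)$ agrees with $g$ on $F$. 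As basic neighborhoods of $g$ in the topology of pointwise convergence are prescribed by finite sets $F$, this shows that $\bigcup_n\iota_n(G_n)$ is dense in $\Isom(\U_\Delta)$, completing the verification that the data form an approximating sequence.

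The one genuinely delicate point is the identification $\Omega\cong\U_\Delta$ in the second step, namely the passage from an arbitrary $\Delta$-valued Kat\v{e}tov function to one taking values in some fixed $\Delta_M$; this hinges on its finite support together with the fact that the Kat\v{e}tov condition is independent of the ambient distance set. Everything else, namely the construction of the extension embeddings and the density argument via ultrahomogeneity, is routine once Lemma \ref{Lem.Ursyohn.embeddings} is available.
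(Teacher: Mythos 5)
Your proposal is correct and follows essentially the same route as the paper: realize $\U_\Delta$ as the increasing union of copies of the $\U_{\Delta_n}$ via Lemma \ref{Lem.Ursyohn.embeddings}, then obtain density of $\bigcup_n\iota_n(G_n)$ from ultrahomogeneity. Your explicit verification that the union has the $\Delta$-Urysohn property (and your care in choosing $N$ with $F\cup g(F)\subseteq\Omega_N$) only spells out details the paper leaves implicit.
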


\begin{proof}
For every $n\in \N$, let $\Omega_n = \U_{\Delta_n}$ and $G_n =\Isom(\U_{\Delta_n})$.  The Kat\v{e}tov construction allows us to see $\Omega_n$ as an increasing sequence of metric spaces with $\Omega\coloneqq\bigcup_{n\in\N}\Omega_n$ being (isometric to) $\U_\Delta$. Lemma \ref{Lem.Ursyohn.embeddings} gives the embeddings $G_0\hookrightarrow G_1\hookrightarrow\dots\hookrightarrow  \Isom(\U_\Delta)$.

It only remains to see that $\bigcup_{n\geq 0}G_n$ is a dense subgroup of $\Isom(\U_\Delta)$. To that aim, let $A\subseteq \U_\Delta$ be a finite subset and $g\in \Isom(\U_\Delta)$. It suffices to find $g' \in \bigcup_{n\geq 0}G_n$ such that $g'_{|A} = g_{|A}$. Since $A$ is finite, there exists $n\in \N$ such that $A\subseteq \Omega_n = \U_{\Delta_n}$. Then $g_{|A}$ is a partial isometry of $\U_{\Delta_n}$ which extends to an element $g' \in \Isom(\U_{\Delta_n})$ by ultrahomogeneity of Urysohn spaces. 
\end{proof}

We can now use the approximation property obtained in Corollary \ref{cor.approximation} to prove the following result. 

\begin{theorem}\label{thm.dissociation.Isom(U_Delta)}
    Let $\Delta$ be a distance set. Then every unitary representation of $\Isom(\U_\Delta)$ is dissociated. 
\end{theorem}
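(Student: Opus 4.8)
The plan is to realize $\Isom(\U_\Delta)$ as the limit of an approximating sequence of \emph{oligomorphic} isometry groups and then invoke the stability of dissociation under such limits. Concretely, by Theorem \ref{thm.dissociation.through.approximation} it suffices to exhibit an approximating sequence $G_0 \hookrightarrow G_1 \hookrightarrow \dots \hookrightarrow \Isom(\U_\Delta)$ in which every $G_n$ has the property that all of its unitary representations are dissociated. By Corollary \ref{cor.approximation}, any increasing sequence of distance sets $(\Delta_n)_{n\geq 0}$ with $\bigcup_n \Delta_n = \Delta$ produces such a sequence with $G_n = \Isom(\U_{\Delta_n})$. So the problem splits into two points: (i) arranging the $\Delta_n$ to be \emph{finite}, and (ii) checking that for a finite distance set the group $\Isom(\U_{\Delta_n})$ has all its representations dissociated.

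For point (ii), I would argue as follows. When $\Delta_n$ is finite the space $\U_{\Delta_n}$ carries only finitely many distance values, so for each $k$ there are finitely many isometry types of $k$-point $\Delta_n$-metric spaces; ultrahomogeneity then forces $\Isom(\U_{\Delta_n}) \curvearrowright \U_{\Delta_n}^{\,k}$ to have finitely many orbits, i.e.\ $G_n$ is oligomorphic. By Lemma \ref{lem.Isom(UDelta).model.theory}, $G_n$ moreover has no algebraicity and weakly eliminates imaginaries. For oligomorphic groups with these two properties, every unitary representation is dissociated by Proposition 3.2 of \cite{JT}, which settles (ii).

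The genuine work is point (i): writing an arbitrary countable distance set $\Delta$ as an increasing union of \emph{finite} distance sets. I would enumerate $\Delta = \{0, \delta_1, \delta_2, \dots\}$, set $F_n = \{0, \delta_1, \dots, \delta_n\}$, and use the elementary observation that a finitely generated additive subsemigroup $\langle F_n\rangle \subseteq \R_+$ meets every bounded interval in a finite set: an element $\sum_i c_i \delta_i \leq M$ forces each coefficient $c_i \leq M/\min\{\delta_i : \delta_i > 0\}$, leaving finitely many possibilities. I would then put $\Delta_n = \langle F_n\rangle \cap [0,r_n]$, where if $\Delta$ is bounded with maximum $r$ (so $\Delta = \mathcal{S}\cap[0,r]$ with $r\in\mathcal{S}$) I take $r_n = r$ and insist $r\in F_n$, and if $\Delta$ is an unbounded semigroup I take $r_n \in \langle F_n\rangle$ increasing with $r_n\to\infty$. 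In either case $\Delta_n$ is a finite distance set (a finitely generated semigroup intersected with $[0,r_n]$, the bound lying in the semigroup), the sequence is increasing, and $\bigcup_n \Delta_n = \Delta$; the bounded case uses the identity $\langle\Delta\rangle \cap [0,r] = \Delta$, which holds because $\langle\Delta\rangle \subseteq \mathcal{S}$.

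With (i) and (ii) established, Corollary \ref{cor.approximation} supplies the approximating sequence $G_n = \Isom(\U_{\Delta_n}) \hookrightarrow \Isom(\U_\Delta)$, each $G_n$ has all its unitary representations dissociated, and Theorem \ref{thm.dissociation.through.approximation} then upgrades this to dissociation of every unitary representation of $\Isom(\U_\Delta)$. I expect the only delicate point to be the bookkeeping in (i): ensuring that each approximating distance set is simultaneously finite, a legitimate distance set (with its bound lying in the generating semigroup), and increasing with union exactly $\Delta$. Everything else is a direct appeal to the machinery already assembled in the preceding sections.
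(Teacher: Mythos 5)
Your proposal is correct and follows essentially the same route as the paper: reduce to finite distance sets $\Delta_n$ (generated by the first $n$ elements of an enumeration of $\Delta$ and truncated at a suitable bound), observe that $\Isom(\U_{\Delta_n})$ is oligomorphic with no algebraicity and weak elimination of imaginaries so that Proposition 3.2 of \cite{JT} applies, and then pass to the limit via Corollary \ref{cor.approximation} and Theorem \ref{thm.dissociation.through.approximation}. The only cosmetic difference is that you establish oligomorphicity directly by counting distance matrices, whereas the paper deduces it from Roelcke precompactness and transitivity; both are fine.
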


\begin{proof}
    Let $\Delta$ be a distance set. If $\Delta$ is finite, then $\Isom(\U_\Delta)$ is Roelcke precompact by Lemma \ref{lem.coarse.geometry}. Since its action on $\U_\Delta$ is transitive, $\Isom(\U_\Delta)$ is oligomorphic by \cite[Prop.~2.4]{Tsankov}. It moreover has no algebraicity and weakly eliminates imaginaries by Lemma \ref{lem.Isom(UDelta).model.theory}. Thus, Proposition 3.2 of \cite{JT} shows that every unitary representation of $\Isom(\U_\Delta)$ is dissociated. 
    
    Therefore, we may assume that $\Delta$ is countably infinite and fix an enumeration $\Delta=\{\delta_n\colon n\geq 0\}$ with $\delta_0=0$. For every $n\geq 0$, let $M_n\coloneqq\max\{\delta_0,\dots,\delta_n\}$. Let $\Delta_n$ be the intersection of the closed interval $[0,M_n]$ and the subsemigroup generated by $\{\delta_0,\dots,\delta_n\}$. Then $(\Delta_n)_{n\geq 0}$ forms an increasing sequence of finite distance sets. By Corollary \ref{cor.approximation}, we have an approximating sequence $$\Isom(\U_{\Delta_0})\hookrightarrow\Isom(\U_{\Delta_1})\hookrightarrow\dots\hookrightarrow\Isom(\U_\Delta).$$
    Since the $\Delta_n$'s are finite, every unitary representation of $\Isom(\U_{\Delta_n})$ is dissociated by the first case. By Theorem \ref{thm.dissociation.through.approximation}, we obtain that every unitary representation of $\Isom(\U_\Delta)$ is dissociated.  
\end{proof}

Combining with the results obtained in Section \ref{sec.dissociation.unirep}, we get the following. Elaborating on Remark \ref{rem.quotient}, note that for $G = \Isom(\U_\Delta)$ where $\Delta \subseteq \R_+$ is a distance set, $\Nm G A / G_A$ naturally identifies with $\Isom(A)$ for every finite subset $A \subseteq \U_\Delta$.

\begin{theorem}\label{thm.classification.unirep.Isom(UDelta)}
    Let $\Delta$ be a distance set and let $G=\Isom(\U_\Delta)$. Then every unitary representation of $G$ is a direct sum $\bigoplus_{i\in I}\Ind_{G_{\{A_i\}}}^G(\sigma_i)$, where $A_i$ are finite subsets of $\U_\Delta$ and $\sigma_i$ are irreducible unitary representations of the finite groups $\Isom(A_i)$.
\end{theorem}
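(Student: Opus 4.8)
The plan is to assemble the two principal strands of the paper. The abstract side, Theorem \ref{thm.classification.dissociated}, classifies dissociated unitary representations of any closed permutation group without algebraicity; the concrete side, Theorem \ref{thm.dissociation.Isom(U_Delta)}, asserts that for $G = \Isom(\U_\Delta)$ \emph{every} continuous unitary representation is dissociated. The statement to be proved is essentially the intersection of these two facts, so the work reduces to checking that the hypotheses match and then translating the conclusion into the stated form.

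First I would verify the standing hypothesis of Theorem \ref{thm.classification.dissociated}: by Lemma \ref{lem.Isom(UDelta).model.theory}, the group $G = \Isom(\U_\Delta)$ has no algebraicity (it also weakly eliminates imaginaries, but only the former is needed here). Then, given an arbitrary continuous unitary representation $\pi \colon G \to \UU(\HH)$, Theorem \ref{thm.dissociation.Isom(U_Delta)} tells us $\pi$ is dissociated, so part (3) of Theorem \ref{thm.classification.dissociated} decomposes $\pi$ as a direct sum $\bigoplus_{i \in I} \pi_i$ of irreducible subrepresentations. Each $\pi_i$, being a subrepresentation of a dissociated representation, is itself dissociated by Lemma \ref{lem.dissociation.subrepresentation}, whence part (1) of Theorem \ref{thm.classification.dissociated} furnishes a finite subset $A_i \subseteq \U_\Delta$ and an irreducible representation $\sigma_i$ of the finite group $\Nm G {A_i}/G_{A_i}$ with $\pi_i \simeq \Ind_{\Nm G {A_i}}^G(\sigma_i)$.

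The only remaining step is the identification of the finite quotient serving as the inducing group. As recalled just before the statement (elaborating on Remark \ref{rem.quotient}), for $G = \Isom(\U_\Delta)$ the group $\Nm G {A_i}/G_{A_i}$ is canonically isomorphic to $\Isom(A_i)$, the isometry group of the finite metric space $A_i$; under this identification each $\sigma_i$ becomes an irreducible representation of $\Isom(A_i)$, yielding exactly the asserted decomposition. I do not expect any genuine obstacle at this stage, since all the difficulty has been front-loaded into Theorems \ref{thm.dissociation.Isom(U_Delta)} and \ref{thm.classification.dissociated}; the one point deserving attention is simply that the no-algebraicity hypothesis of the abstract classification is met, which is precisely what Lemma \ref{lem.Isom(UDelta).model.theory} (resting on Slutsky's theorem) guarantees.
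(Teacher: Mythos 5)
Your proposal is correct and follows exactly the paper's intended argument: the paper proves this theorem by the single remark ``Combining with the results obtained in Section \ref{sec.dissociation.unirep}, we get the following,'' i.e.\ precisely the combination of Theorem \ref{thm.dissociation.Isom(U_Delta)}, Theorem \ref{thm.classification.dissociated} (with no algebraicity supplied by Lemma \ref{lem.Isom(UDelta).model.theory}), and the identification of $G_{\{A\}}/G_A$ with $\Isom(A)$ from Remark \ref{rem.quotient}. You simply spell out the same steps, including the correct use of Lemma \ref{lem.dissociation.subrepresentation} to pass dissociation to the irreducible summands before invoking part (1) of the classification.
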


\begin{remark}
    For a \emph{finite} distance set $\Delta$, we explained that $\Isom(\U_\Delta)$ is oligomorphic, has no algebraicity and eliminates weakly imaginaries.  Therefore, Theorem \ref{thm.dissociation.Isom(U_Delta)} and Theorem \ref{thm.classification.unirep.Isom(UDelta)} for $\Delta$ finite are special cases of results respectively due to the second author and Tsankov \cite[Prop.~3.2]{JT} and to Tsankov \cite[Cor.~5.2]{Tsankov}. For infinite distance set, the results are new.
\end{remark}

\section{Property (T) for $\Isom(\U_\Delta)$}\label{sec.property(T)}

The aim of this section is to use techniques developed by Tsankov in \cite{Tsankov} to prove property (T) for isometry groups of Urysohn $\Delta$-metric spaces. We start by recalling the definition of property (T) for topological groups. 

\begin{definition}
    A topological group $G$ has \defin{property (T)} if there exists a compact subset $Q\subseteq G$ and $\varepsilon>0$ such that every unitary representation $\pi \colon G\to\UU(\HH)$ with a non-zero vector $\xi\in\HH$ which is $(Q,\varepsilon)$-invariant in the sense that 
    \[\underset{g\in Q}{\sup\,}\lVert \pi(g)\xi-\xi\rVert \leq \varepsilon\lVert \xi\rVert,\]
     admits a non-zero invariant vector. The set $Q$ is called a Kazhdan set for $G$  and $\varepsilon$ a Kazhdan constant for $Q$. The couple $(Q,\varepsilon)$ is called a Kazhdan pair.
\end{definition}

To prove property (T) for $\Isom(\U_\Delta)$ we follow the strategies from Bekka \cite{Bekka} and Evans, Tsankov \cite{EvansTsankov}. For this, let us extract the following result from their works (see the proof of Theorem 2 in \cite{Bekka} and the proof of Theorem 1.1 in \cite{EvansTsankov}). 

\begin{theorem}\label{thm.property(T)}
    Let $G\leq\Sym(\Omega)$ be a closed permutation group. Assume that the following holds.
    \begin{itemize}
        \item Every irreducible unitary representation of $G$ is a subrepresentation of $\ell^2(G/V)$ for some open subgroup $V\leq G$.
        \item Every unitary representation of $G$ is a direct sum of irreducible ones.
        \item There exists a finite subset $Q\subseteq G$ generating a free group $F$, such that for all proper, open subgroup $V\leq G$, the action of $F$ on $G/V$ is free.
    \end{itemize}
     Then $G$ has property (T). More precisely, $(Q,\varepsilon)$ is a Kazhdan pair for G, with
    \[\varepsilon=\sqrt{2-\frac{2\sqrt{2\lvert S\rvert-1}}{\lvert S\rvert}}\]
    and $\varepsilon$ is the optimal Kazhdan constant for $Q$.
\end{theorem}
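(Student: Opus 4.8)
The plan is to convert property (T) into a spectral gap statement about the averaging operator attached to $Q$, the gap being governed by the simple random walk on the free group $F=\langle Q\rangle$. Write $m=\lvert Q\rvert$ (so that $\lvert S\rvert=m$ in the statement) and let $M_\pi=\frac{1}{2m}\sum_{s\in Q\cup Q\inv}\pi(s)\in\BB(\HH)$ be the (self-adjoint) averaging operator of a representation $\pi\colon G\to\UU(\HH)$. The single external input I would use is Kesten's theorem: on the left-regular representation $\lambda_F$ of the free group $F$ on $m$ generators, the Markov operator $M=\frac{1}{2m}\sum_{s\in Q\cup Q\inv}\lambda_F(s)$ has norm equal to its spectral radius $\rho\coloneqq\frac{\sqrt{2m-1}}{m}$, its spectrum is $[-\rho,\rho]$, and $\rho$ is \emph{not} an eigenvalue.

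First I would use the two representation-theoretic hypotheses to bound $\lVert M_\pi\rVert$ for every $\pi$ without nonzero invariant vectors. Decompose $\pi=\bigoplus_i\sigma_i$ into irreducibles. If $\pi$ has no nonzero invariant vector then no $\sigma_i$ is trivial, so each $\sigma_i$ embeds as a $G$-subrepresentation of some $\ell^2(G/V_i)$ with $V_i\leq G$ open and necessarily \emph{proper} (else $\ell^2(G/V_i)=\C$). Here the freeness hypothesis enters: since $F$ acts freely on $G/V_i$, each $F$-orbit is $F$-equivariantly isomorphic to $F$ with its left-translation action, so $\ell^2(G/V_i)_{\mid F}$ is a multiple $\lambda_F^{\oplus}$ of the regular representation. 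Thus $\pi_{\mid F}$ embeds $F$-equivariantly into a multiple of $\lambda_F$; as $M$ is self-adjoint and the embedding intertwines $M_\pi$ with $M$, and the norm of $M$ on $\lambda_F^{\oplus}$ equals its norm $\rho$ on a single copy, we conclude $\lVert M_\pi\rVert\leq\rho$.

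The Kazhdan estimate is then a short computation. Suppose $\xi$ is a unit vector with $\sup_{s\in Q}\lVert\pi(s)\xi-\xi\rVert\leq\varepsilon$; the same bound holds over $Q\inv$ since $\lVert\pi(s\inv)\xi-\xi\rVert=\lVert\pi(s)\xi-\xi\rVert$, so $\operatorname{Re}\langle\pi(s)\xi\mid\xi\rangle\geq 1-\varepsilon^2/2$ for all $s\in Q\cup Q\inv$ and hence $\langle M_\pi\xi\mid\xi\rangle\geq 1-\varepsilon^2/2$. If $\pi$ has no invariant vector, combining with $\langle M_\pi\xi\mid\xi\rangle\leq\lVert M_\pi\rVert\leq\rho$ gives $1-\varepsilon^2/2\leq\rho$, that is $\varepsilon\geq\varepsilon_0\coloneqq\sqrt{2-2\rho}=\sqrt{2-\frac{2\sqrt{2m-1}}{m}}$. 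At the boundary value $\varepsilon=\varepsilon_0$ one has the equality $\langle M_\pi\xi\mid\xi\rangle=\rho=\lVert M_\pi\rVert$, which forces $M_\pi\xi=\rho\xi$ (expand $\lVert M_\pi\xi-\rho\xi\rVert^2\leq 0$); transported through the embedding this exhibits a genuine eigenvector of $M$ on $\lambda_F^{\oplus}$ for the eigenvalue $\rho$, contradicting Kesten's theorem. Hence no representation without invariant vectors carries a $(Q,\varepsilon_0)$-invariant unit vector, so $(Q,\varepsilon_0)$ is a Kazhdan pair.

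For optimality I would run the inequality backwards inside one quasi-regular representation. Choose a proper open $V\leq G$ with $G/V$ infinite — a point stabilizer works, since $G$ has no finite orbits — so that $\ell^2(G/V)$ has no nonzero invariant vector while $\ell^2(G/V)_{\mid F}$ contains a copy of $\lambda_F$. Taking \emph{radial} vectors $\xi_n\in\ell^2(F)$ (functions of word length), symmetry under the automorphisms of $F$ permuting and inverting the generators makes $\lVert\pi(s)\xi_n-\xi_n\rVert$ independent of $s\in Q$, while one can arrange $\langle M\xi_n\mid\xi_n\rangle\to\rho$ since $\rho$ lies at the edge of the spectrum; then $\sup_{s\in Q}\lVert\pi(s)\xi_n-\xi_n\rVert^2=2\bigl(1-\langle M\xi_n\mid\xi_n\rangle\bigr)\to\varepsilon_0^2$, so no $\varepsilon>\varepsilon_0$ can be a Kazhdan constant for $Q$. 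The step I expect to be the main obstacle is the structural identification $\ell^2(G/V)_{\mid F}\cong\lambda_F^{\oplus}$ via freeness (and, more delicately, the two boundary analyses: ruling out the eigenvalue $\rho$ for the Kazhdan pair, and realizing the supremum rather than merely the average of the defects for optimality); the value of the spectral radius itself is imported from Kesten's theorem.
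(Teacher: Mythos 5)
Your argument is correct and is essentially the proof the paper points to in Bekka and Evans--Tsankov (the paper only extracts the statement from those sources rather than reproving it): bound the Markov operator of $Q\cup Q^{-1}$ by the Kesten norm $\sqrt{2m-1}/m$ via the embedding of every invariant-vector-free representation into multiples of $\lambda_F$ coming from the first two hypotheses plus freeness, dispose of the boundary case by the absence of the eigenvalue $\rho$ on $\ell^2(F)$, and obtain optimality from radial almost-eigenvectors inside a single $\ell^2(G/V)$. One cosmetic point: the infinitude of $G/V$ for a proper open $V$ should be justified by the freeness of the action of the infinite group $F$ on $G/V$, not by ``$G$ has no finite orbits'', which is not among the stated hypotheses.
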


Notice that the first two items hold when $G=\Isom(\U_\Delta)$. Indeed, by Theorem \ref{thm.classification.unirep.Isom(UDelta)}, every unitary representation of $\Isom(\U_\Delta)$ is a direct sum of irreducible ones, which are of the form $\Ind_{\Nm G A}^G(\sigma)$ for some $A\subseteq\U_\Delta$ finite and some unitary representation $\sigma$ of $\Nm G A$ which factors through the finite group $F=\Nm G A/G_A$. But such a $\sigma$ is a subrepresentation of the left-regular representation $\lambda_F$ of $F$ and therefore we have
\begin{align*}
      \Ind_{G_{\{A\}}}^G(\sigma)\leq \Ind_{G_{\{A\}}}^G(\lambda_F) &\simeq \Ind_{G_{\{A\}}}^G(\Ind_{G_A}^{G_{\{A\}}}(1_{G_A}))  \\
      &\simeq \Ind_{G_A}^G(1_{G_A}) \\ &\simeq\ell^2(G/G_A).
  \end{align*}

Therefore, to prove Theorem \ref{thmintro.property(T)}, it remains to show the existence of a non-abelian free subgroup acting freely on every infinite transitive permutation representation of $\Isom(\U_\Delta)$. To do this, we first refer to the methods developed in \cite{FLMMM} to obtain the following:

\begin{lemma}\label{lem.free.group.freely}
    Let $\Delta\subseteq\R_+$ be a distance set. Then every countably infinite group admits a free isometric action on $\U_\Delta$. 
\end{lemma}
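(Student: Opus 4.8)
The plan is to realize the free isometric action on a \emph{fixed} underlying free $\Gamma$-set, so that freeness becomes automatic and only the Urysohn property has to be arranged. Let $\Gamma$ be a countably infinite group and set $\Omega=\Gamma\times\N$, on which $\Gamma$ acts by $\gamma\cdot(\gamma',n)=(\gamma\gamma',n)$; this action is free since left multiplication on $\Gamma$ is free. Any $\Gamma$-invariant $\Delta$-metric $d$ on $\Omega$ then makes $\Gamma$ act by isometries, and this isometric action is automatically free because the underlying set action already is. Hence it suffices to produce a single $\Gamma$-invariant $\Delta$-metric $d$ on $\Omega$ for which $(\Omega,d)$ has the $\Delta$-Urysohn property of Lemma \ref{Lem.Katetov}: by uniqueness $(\Omega,d)$ is then isometric to $\U_\Delta$, and transporting the $\Gamma$-action through this isometry yields a free isometric action of $\Gamma$ on $\U_\Delta$.

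To produce such a metric I would argue by Baire category. Let $\mathcal{M}$ be the set of all $\Gamma$-invariant $\Delta$-metrics on $\Omega$, viewed as a subset of $\Delta^{\Omega\times\Omega}$ with the product topology ($\Delta$ discrete). The metric axioms, the triangle inequality and $\Gamma$-invariance are closed conditions, while off-diagonal positivity is $G_\delta$, so $\mathcal{M}$ is a Polish space; it is nonempty since the $\Gamma$-invariant discrete metric (value a fixed $c\in\Delta\setminus\{0\}$) belongs to it. There are only countably many pairs $(F,f)$ with $F\subseteq\Omega$ finite and $f\colon F\to\Delta$ a Kat\v{e}tov function, and for each the set
\[ U_{F,f}=\{\, d\in\mathcal{M}\colon \exists\, x\in\Omega,\ \forall y\in F,\ d(x,y)=f(y)\,\} \]
is open, since realization is witnessed by a single point and hence by finitely many coordinates. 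If each $U_{F,f}$ is also dense, then $\bigcap_{F,f}U_{F,f}$ is comeager, hence nonempty, and any $d$ in it gives $(\Omega,d)$ the $\Delta$-Urysohn property.

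It remains to prove density of $U_{F,f}$, which is the heart of the argument. Given $d\in\mathcal{M}$ and a basic neighbourhood prescribing $d$ on finitely many pairs supported on a finite $T\subseteq\Omega$, I would realize $f$ by attaching a \emph{fresh} free orbit: choose $x=(e,m_0)$ with $m_0\in\N$ not occurring among the second coordinates of $T\cup F$, so that $\Gamma x=\Gamma\times\{m_0\}$ is disjoint from $T\cup F$. Keep $d$ unchanged off $\Gamma x$ (respecting the neighbourhood) and set the new distances by the $\Gamma$-equivariant one-point Kat\v{e}tov rule $d'(x,z)=\min_{y\in F}\big(f(y)+d(y,z)\big)$ for $z\notin\Gamma x$, together with $d'(\gamma x,z)=d'(x,\gamma^{-1}z)$ and internal orbit distances $d'(\gamma x,\gamma'x)=\min_{z\in\Omega}\big(d'(\gamma x,z)+d'(z,\gamma'x)\big)$, every value being capped at $r$ when $\Delta=\mathcal{S}\cap[0,r]$ is bounded. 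The Kat\v{e}tov inequalities on $f$ guarantee that the prescribed distances $d'(x,y)=f(y)$ are not shortened by detours, so $x$ genuinely realizes $f$, while $d'(\gamma x,z)\geq\min_{y\in F}f(y)>0$ keeps all new distances positive; that they lie in $\Delta$ uses only that $\Delta$ is an additive semigroup (closed under the sums in the geodesic rule) and that a minimum of finitely many elements of $\Delta$ is again in $\Delta$. The resulting $d'$ is a $\Gamma$-invariant $\Delta$-metric lying in the given neighbourhood and in $U_{F,f}$, proving density. The main obstacle is precisely this equivariant one-orbit extension: verifying that the geodesic amalgamation remains $\Delta$-valued and satisfies the triangle inequality, together with the capping at $r$ in the bounded case; granting this routine verification, Baire category concludes the proof.
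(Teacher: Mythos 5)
Your argument is correct in its essentials but takes a genuinely different route from the paper's. The paper adapts the construction of \cite{FLMMM}: starting from the left action of $\Gamma$ on itself equipped with the (rescaled) discrete metric, which is \emph{strongly free}, one iterates an adapted Kat\v{e}tov functor that preserves strong freeness (Item 4 of Proposition 3.8 in \cite{FLMMM}, with the bound removed from the definition of the adapted Kat\v{e}tov space), and the union of the resulting tower is $\U_\Delta$. You instead fix the free $\Gamma$-set $\Omega=\Gamma\times\N$ once and for all and run a Baire category argument in the Polish space of $\Gamma$-invariant $\Delta$-metrics on $\Omega$, showing that the $\Delta$-Urysohn property of Lemma \ref{Lem.Katetov} is generic; freeness then comes for free from the underlying set action. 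Your approach is self-contained (it does not lean on the adapted Kat\v{e}tov spaces of \cite{FLMMM}) at the price of verifying the equivariant one-orbit amalgamation by hand; the paper's approach is shorter given \cite{FLMMM} and yields the stronger conclusion that all displacements are uniformly bounded below, though only plain freeness is used later in Corollary \ref{cor.torsion.free.acting.freely}.

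Two points in your write-up need attention, both fixable. First, whether $f\colon F\to\Delta$ is a Kat\v{e}tov function depends on the metric $d$, so the sets $U_{F,f}$ as you define them are open but need not be dense: a basic neighbourhood may pin $d|_{F\times F}$ to values for which $f$ violates the Kat\v{e}tov inequalities, and then no metric in that neighbourhood can realize $f$. Index instead by triples $(F,\rho,f)$ with $\rho$ a $\Delta$-metric on $F$ and $f$ Kat\v{e}tov with respect to $\rho$, and take $U_{F,\rho,f}=\{d\colon d|_{F\times F}\neq\rho\}\cup\{d\colon\exists x\ \forall y\in F,\ d(x,y)=f(y)\}$; these are open, dense by your extension argument, and their intersection still yields the Urysohn property. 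Second, the verification you defer is genuinely the crux, and it does go through, but two things should be said explicitly: you may assume $\min_F f>0$ (otherwise $f$ is already realized by a point of $F$), which is what makes all new distances positive; and in the orbit-internal formula the minimum must range over $z\notin\Gamma x$ and is a priori an infinite infimum --- the reason it stays in $\Delta$ and is attained is that it collapses to the finite minimum $\min_{y,y'\in F}\,(f(y)+f(y')+d(\gamma y,\gamma' y'))$ of semigroup sums, attained at $z\in\gamma' F$. With these adjustments the proof is complete.
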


The authors of \cite{FLMMM} proved much stronger versions of the above statement for $\Delta$ bounded and $\Delta = \Q_+$ (Lemma 3.11 and Proposition 7.5 in \cite{FLMMM} respectively). However, their construction in the bounded case can easily be adapted in order to obtain the above statement. Indeed, say that an isometric action $\Gamma \curvearrowright (X,d)$ of a countable group $\Gamma$ on a metric space $(X,d)$ is \defin{strongly free} if for every $\gamma \neq e_\Gamma, \forall x \in X, \ d(\gamma\cdot x, x) \geqslant 1$. Given a distance set $\Delta\subseteq \R_+$ and a countable group $\Gamma$, start with the strongly free left-action $\Gamma \curvearrowright (\Gamma, d)$ where $d$ is the discrete metric ($d(\gamma, \gamma') =1$ if $\gamma \neq \gamma'$ and $0$ otherwise). Up to rescaling $d$, we can always assume that $(\Gamma,d)$ is a $\Delta$-metric space. The authors of \cite{FLMMM} defined a variation of the Kat\v etov construction such that at each step, the action of $\Gamma$ on the adapted Kat\v etov space remains strongly free. This is the content of Item 4 in \cite[Prop.~3.8]{FLMMM} and still holds when we remove the bound in the definition of the adapted Kat\v etov space. Applying this construction iteratively, we recover a strongly free action of $\Gamma$ on $\U_\Delta$, hence the above lemma.

Now, if one assumes that $\Gamma$ is torsion free, the action such obtained has the desired property:

\begin{corollary}\label{cor.torsion.free.acting.freely}
    Let $\Delta\subseteq\R_+$ be a distance set. Then for every torsion-free countable group $\Gamma$, there exists a free isometric action of $\Gamma$ on $\U_\Delta$ such that for all proper, open subgroup $V\leq\Isom(\U_\Delta)$, the action of $\Gamma$ on $\Isom(\U_\Delta)/V$ is free.
\end{corollary}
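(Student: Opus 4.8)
The plan is to start from the free isometric action of $\Gamma$ on $\U_\Delta$ provided by Lemma \ref{lem.free.group.freely} (the strongly free action constructed just above), and to regard $\Gamma$ as a subgroup of $G \coloneqq \Isom(\U_\Delta)$ via the induced embedding. For the $\Gamma$-action on $G/V$ by left translation, the stabilizer of a coset $gV$ is $\Gamma \cap gVg^{-1}$. Since any conjugate $gVg^{-1}$ of a proper open subgroup is again a proper open subgroup, freeness of the $\Gamma$-action on $G/V$ for \emph{all} proper open $V$ reduces to the single statement that $\Gamma \cap V = \{e_\Gamma\}$ for every proper open subgroup $V \leq G$.

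The first step is purely dynamical. Because $\Gamma$ is torsion-free and acts freely, I claim every $\gamma \neq e_\Gamma$ acts on $\U_\Delta$ with all orbits infinite: a finite $\langle\gamma\rangle$-orbit would give $\gamma^k x = x$ for some $x$ and some $k \geq 1$, and then $\gamma^k \neq e_\Gamma$ (by torsion-freeness) would fix $x$, contradicting freeness. Consequently no nontrivial $\gamma$ can stabilize a nonempty finite set $A \subseteq \U_\Delta$ setwise, since such a set would be a union of finite $\langle\gamma\rangle$-orbits. In other words, $\Gamma \cap G_{\{A\}} = \{e_\Gamma\}$ for every nonempty finite $A \subseteq \U_\Delta$.

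The second, more technical step is a structural description of proper open subgroups: I would show that every proper open subgroup $V \leq G$ satisfies $G_A \leq V \leq G_{\{A\}}$ for some nonempty finite $A \subseteq \U_\Delta$. By weak elimination of imaginaries (Lemma \ref{lem.Isom(UDelta).model.theory}) there is a finite $A$ with $G_A \leq V$ and $[V:G_A] < +\infty$; properness forces $A \neq \emptyset$, since $A = \emptyset$ would give $G = G_\emptyset \leq V$. For the upper bound, fix $v \in V$: then $G_{vA} = vG_Av^{-1} \leq V$, hence $\langle G_A, G_{vA}\rangle \leq V$, and Theorem \ref{thm.Slutsky} yields $\langle G_A, G_{vA}\rangle = G_{A \cap vA}$. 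As $[V:G_A] < +\infty$, we obtain $[G_{A \cap vA} : G_A] < +\infty$; but no algebraicity forces $[G_B : G_A] = +\infty$ whenever $B \subsetneq A$, because the $G_B$-orbit of a point of $A \setminus B$ is infinite. Therefore $A \cap vA = A$, so $vA = A$ and $v \in G_{\{A\}}$, giving $V \leq G_{\{A\}}$.

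Combining the two steps finishes the argument: a proper open $V$ lies inside $G_{\{A\}}$ for some nonempty finite $A$, and $\Gamma \cap G_{\{A\}} = \{e_\Gamma\}$ by the first step, whence $\Gamma \cap V = \{e_\Gamma\}$ and the $\Gamma$-action on every $G/V$ is free. I expect the structural lemma of the third paragraph---extracting $V \leq G_{\{A\}}$ from weak elimination of imaginaries together with no algebraicity---to be the main obstacle, the remaining pieces being direct.
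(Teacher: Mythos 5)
Your proof is correct and follows essentially the same route as the paper: embed the strongly free copy of $\Gamma$ into $G$, sandwich a proper open subgroup as $G_A \leq V \leq G_{\{A\}}$ with $A$ nonempty finite, and use torsion-freeness to show no nontrivial $\gamma$ can setwise stabilize a finite set. The only difference is that you supply the details of the sandwich $V \leq G_{\{A\}}$ (via Slutsky's theorem and the infinite-index argument from no algebraicity), which the paper simply cites as a consequence of no algebraicity and weak elimination of imaginaries.
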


\begin{proof}
    Let $G=\Isom(\U_\Delta)$ and let $\Gamma$ be a torsion-free countable group. By Lemma \ref{lem.free.group.freely}, we fix $\Gamma\leq G$ whose action on $\U_\Delta$ is free. Let $V\leq G$ be a proper, open subgroup. Since $G$ has no algebraicity and weakly eliminates imaginaries by Lemma \ref{lem.NoAlg.WEI}, there exists a unique finite subset $A\subseteq\U_\Delta$ such that $G_A\leq V\leq G_{\{A\}}$. As $V$ is proper, then $A$ is non-empty. Assume that there exists a non-trivial element $\gamma\in\Gamma$ and a coset $gV\in G/V$ such that $\gamma\cdot gV=gV$. Then $\gamma\in gVg\inv$ and in particular, $\gamma$ fixes setwise $g(A)$. Since $\Gamma$ is torsion-free, some non-trivial power of $\gamma$ has a fixed point in $g(A)$. This contradicts the freeness of the action $\Gamma\curvearrowright\U_\Delta$. Thus, the action $\Gamma\curvearrowright G/V$ is free. 
\end{proof}

This applies in particular to non-abelian free groups. Therefore, $\Isom(\U_\Delta)$ satisfies every assumptions of Theorem \ref{thm.property(T)}. Thus, we have proved Theorem \ref{thmintro.property(T)}.

\section{A discussion on representations of $\Isom(\U)$}\label{sec.Urysohnbig}

Let $\U$ be the Urysohn space, which is the unique complete, separable metric space satisfying the following two conditions:
\begin{enumerate}
    \item (\emph{universality}) every separable metric space embeds isometrically into $\U$,
    \item (\emph{ultrahomogeneity}) every isometry between finite subspaces of $\U$ extends to an isometry of $\U$.
\end{enumerate}

One way of constructing $\U$ is by taking the completion of $\Q\U$. Let $\Isom(\U)$ be the group of all isometries of $\U$ onto itself. It is a Polish group when equipped with the topology of pointwise convergence (here $\U$ is equipped with the topology induced by its metric). The following result was obtained around a decade ago by Tent and Ziegler but never published. It can be deduced from their techniques developed in \cite{TentZiegler} and \cite{TentZieglerBounded}. 

\begin{theorem}[Tent, Ziegler, unpublished]
    The isometry group of the Urysohn space $\U$ is topologically simple.
\end{theorem}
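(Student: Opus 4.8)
The plan is to prove topological simplicity by showing that every nontrivial \emph{closed} normal subgroup $N\trianglelefteq\Isom(\U)$ is dense, and therefore equal to $\Isom(\U)$. Density — rather than outright equality — is the correct target here, because $\Isom(\U)$ is not abstractly simple: the isometries of bounded displacement (those $g$ with $\sup_x d(x,gx)<\infty$) form a \emph{proper} normal subgroup, so no argument producing $N=\Isom(\U)$ on the nose can be correct. The engine of the proof is the stationary independence relation $\indep$ on $\U$ coming from free metric amalgamation: over a nonempty finite base $\bar c$, one has $\bar b \indep_{\bar c}\bar b'$ precisely when $d(b_i,b'_j)=\min_k\bigl(d(b_i,c_k)+d(c_k,b'_j)\bigr)$, the largest value allowed by the triangle inequality. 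This relation is $\Isom(\U)$-invariant, symmetric, and stationary, and it is the same relation underlying the Tent--Ziegler analysis; I would first record its existence and these properties (invariance, existence of independent realizations, stationarity), referring to \cite{TentZiegler, TentZieglerBounded}.

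Next I would fix $1\ne g\in N$ together with a point $a_0$ such that $b_0:=g(a_0)\ne a_0$, and set $r=d(a_0,b_0)>0$. By ultrahomogeneity, for any pair $p,q$ with $d(p,q)=r$ there is $h$ with $h(a_0)=p$, $h(b_0)=q$, so that $hgh^{-1}\in N$ realizes the displacement $p\mapsto q$; thus $N$ already contains a conjugate of $g$ displacing any prescribed pair at distance $r$. Since $N$ is a closed subgroup and $\langle\langle g\rangle\rangle\subseteq N$, it suffices to establish the density statement: for every $f\in\Isom(\U)$, every finite $A\subseteq\U$ and every $\varepsilon>0$, there is a word $w$ in the conjugates of $g^{\pm 1}$ with $\max_{a\in A} d(w(a),f(a))<\varepsilon$. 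As every finite partial isometry extends to a global one by ultrahomogeneity, this reduces to realizing an arbitrary partial isometry on $A$, up to $\varepsilon$, by such a word; once it is proven, $N=\overline{\langle\langle g\rangle\rangle}=\Isom(\U)$.

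The heart of the argument — and the step I expect to be the main obstacle — is the Tent--Ziegler combinatorial lemma that fragments a target partial isometry into a product of conjugates of $g^{\pm 1}$ with controlled interference. The difficulty is intrinsic: each conjugate of $g$ is a \emph{global} isometry that moves every point, so one cannot displace the $a_i$ one at a time by fiat. The resolution is to use $\indep$ to place the successive ``active configurations'' in free position, so that the min-distance formula makes the elementary displacements compose predictably and the unwanted motion is pushed into regions independent from $A$, where it is then cancelled by a partner conjugate (for instance through commutators $[hgh^{-1},s]\in N$, whose effect is localized). Iterating such elementary moves walks each $a_i$ to within $\varepsilon$ of $f(a_i)$ while the accumulated error on $A$ telescopes to $0$; stationarity of $\indep$ is exactly what guarantees, at each stage, that the required independent copies exist and behave uniformly. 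This is precisely the mechanism of \cite{TentZiegler, TentZieglerBounded}, with the only genuine adaptation being the passage from exact equality in the bounded setting to $\varepsilon$-approximation in the topology of pointwise convergence on $\Isom(\U)$.
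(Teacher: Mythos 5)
The paper does not actually prove this statement: it is quoted as an unpublished result of Tent--Ziegler, with a pointer to the techniques of \cite{TentZiegler} and \cite{TentZieglerBounded}, so there is no in-paper argument to compare against line by line. Your framing is the right one and matches the route the paper gestures at: topological simplicity is equivalent to every nontrivial normal subgroup being dense; your observation that the bounded-displacement isometries form a proper normal subgroup correctly explains why density (and not abstract simplicity) is the target; and the conjugation step producing, from one $g\in N$ with $d(a_0,ga_0)=r$, elements of $N$ realizing any prescribed displacement $p\mapsto q$ with $d(p,q)=r$ is sound.

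The genuine gap is that the entire mathematical content of the theorem sits in your last paragraph, which describes the Tent--Ziegler mechanism rather than executing it. Concretely, three things are asserted but not established. First, you need a quantitative lemma turning the single available displacement value $r$ into arbitrary (in particular, arbitrarily small) nonzero displacements at prescribed points: the set $\{d(x,wx): w \text{ a word in conjugates of } g^{\pm1}\}$ must be shown dense in a neighbourhood of each target value, which in \cite{TentZiegler} is done by computing $d(x,g_1g_2x)$ for configurations placed in free position via $\indep$ and is not automatic. Second, the passage from ``move one point to within $\varepsilon$ of its target'' to ``realize a full partial isometry on $A$'' requires actual bookkeeping showing that later moves do not destroy earlier ones; the phrases ``pushed into regions independent from $A$'' and ``cancelled by a partner conjugate'' name the idea but give no estimate, and the claim that the accumulated error ``telescopes to $0$'' is exactly what must be proved (a priori the errors could add up). Third, even granting both, one must check that the approximating words can be chosen so that the construction closes up for \emph{every} $f\in\Isom(\U)$, including those of unbounded displacement, which is where the distinction between $\U$ and the bounded Urysohn sphere genuinely matters. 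As it stands the proposal is a correct plan with the proof of its key lemma outsourced to the literature; to count as a proof it would need those lemmas imported with precise statements, or proved.
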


The proof of the following lemma is straightforward.
\begin{lemma}\label{lem.trivial.continuous.homomorphism}
    Let $G,H,K$ be topological groups. Assume that $G$ is a closed subgroup of $H$, that $H$ is topologically simple and that every continuous homomorphism $G\to K$ is trivial. Then every continuous homomorphism $H\to K$ is trivial.  
\end{lemma}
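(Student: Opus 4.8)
The plan is to take an arbitrary continuous homomorphism $\varphi\colon H\to K$ and show that its kernel must be all of $H$, by exploiting topological simplicity. First I would set $N\coloneqq\ker(\varphi)=\varphi^{-1}(\{e_K\})$. Since $\varphi$ is continuous and $K$ is Hausdorff (so that $\{e_K\}$ is closed), $N$ is a closed subgroup of $H$; being a kernel, it is moreover normal in $H$.

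Next I would restrict $\varphi$ to $G$. As $G$ carries the subspace topology inherited from $H$, the map $\varphi|_{G}\colon G\to K$ is again a continuous homomorphism. By hypothesis every such homomorphism is trivial, so $\varphi(g)=e_K$ for all $g\in G$; that is, $G\subseteq N$.

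Finally I would invoke topological simplicity. Since $G$ is non-trivial, the closed normal subgroup $N$ contains a non-identity element and hence $N\neq\{e_H\}$. Topological simplicity of $H$ then leaves only the alternative $N=H$, which means exactly that $\varphi$ is trivial. As $\varphi$ was arbitrary, every continuous homomorphism $H\to K$ is trivial.

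I do not expect any genuine obstacle here: the argument is the standard one that a normal subgroup containing a non-trivial subgroup must, under simplicity, be everything. The only points requiring care are the two standing conventions on topological groups — that $K$ is Hausdorff, so that $N$ is indeed closed, and that $G$ (hence $H$) is non-trivial, without which the conclusion would fail. Both are satisfied in the intended application, where $G=\Isom(\Q\U)$ and $H=\Isom(\U)$.
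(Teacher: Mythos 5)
Your proof is correct and is exactly the standard kernel argument that the paper omits as ``straightforward'': the kernel is a closed normal subgroup containing $G$, hence all of $H$ by topological simplicity. The two caveats you flag --- Hausdorffness of $K$ (so the kernel is closed) and non-triviality of $G$ (without which the statement fails, e.g.\ for the identity map $H\to H$) --- are genuinely needed for the lemma as written and hold in the paper's application.
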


We therefore obtain the following consequence on representations of $\Isom(\U)$. Recall that a Banach space $E$ is reflexive if the canonical evaluation map from $E$ to its bidual $E^{**}$ is a homeomorphism. If $E$ is a Banach space, let $\Isom(E)$ be the group of linear isometries of $E$, which we equip with the topology of pointwise convergence.

\begin{corollary}\label{cor.Isom(U)}
    The isometry group of the Urysohn space $\U$ admits no non-trivial continuous representation by isometry on a reflexive Banach space. 
\end{corollary}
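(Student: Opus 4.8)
The plan is to apply Lemma \ref{lem.trivial.continuous.homomorphism} with $H = \Isom(\U)$, taking $G = \Isom(\Q\U)$ as a closed subgroup and $K = \Isom(E)$ the isometry group of the reflexive Banach space. The theorem of Tent and Ziegler provides the topological simplicity of $H$, and the restriction-extension mechanism coming from the completion $\U = \overline{\Q\U}$ should realize $\Isom(\Q\U)$ as a closed subgroup of $\Isom(\U)$ (every isometry of $\Q\U$ extends uniquely by uniform continuity to an isometry of its completion, and this embedding is topological and closed). So the reduction to be carried out is: \emph{every continuous homomorphism $\Isom(\Q\U) \to \Isom(E)$ is trivial}, and then Lemma \ref{lem.trivial.continuous.homomorphism} upgrades this to $\Isom(\U)$.

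\medskip

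First I would verify the embedding $\Isom(\Q\U) \hookrightarrow \Isom(\U)$ is a closed topological subgroup, which follows from the universal property of the completion together with the fact that pointwise convergence on the dense set $\Q\U$ controls pointwise convergence on $\U$. Next, the heart of the matter is ruling out isometric representations on reflexive Banach spaces. The key input is the classification already established: by Theorem \ref{thm.classification.unirep.Isom(UDelta)} (applied with $\Delta = \Q_+$, so $\U_\Delta = \Q\U$), every continuous \emph{unitary} representation of $\Isom(\Q\U)$ is a direct sum of representations induced from finite setwise stabilizers, and since $\Isom(\Q\U)$ is topologically simple (Theorem 7.3 of \cite{EGLMM}) with no non-trivial finite-index phenomena on the relevant quotients, the only irreducible representation containing an invariant vector is the trivial one; I would first deduce that $\Isom(\Q\U)$ has \emph{no non-trivial continuous unitary representation}. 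The passage from reflexive Banach to Hilbert is the standard Banach--Mazur / Ryll-Nardzewski type argument: a continuous isometric action on a reflexive Banach space, combined with property (T) (Theorem \ref{thmintro.property(T)}) or directly with the coarse boundedness from Lemma \ref{lem.coarse.geometry}, forces the existence of a fixed point of the affine action, and more structurally one uses that reflexive isometric representations of a group with these rigidity properties decompose so that no non-trivial continuous homomorphism into $\Isom(E)$ can survive.

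\medskip

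The main obstacle I anticipate is precisely this last step: translating the classification of \emph{unitary} (Hilbert) representations into a statement about isometries of a general reflexive Banach space. The cleanest route is to invoke a theorem (in the spirit of Bader--Gelander--Monod or the uniform-convexity machinery) that a continuous isometric representation of a topologically simple, property-(T) Polish group on a reflexive space is either trivial or has no non-zero invariant subspace, and then to show via the barycenter / Ryll-Nardzewski fixed point theorem that the orbit closures in the bounded weak topology force triviality. Concretely, I would argue that any such representation restricted to the open subgroup $\Isom(\Q\U)_A$ fixing a finite $A$ must fix vectors, use no algebraicity and weak elimination of imaginaries (Lemma \ref{lem.Isom(UDelta).model.theory}) to propagate invariance, and conclude that the whole representation factors through a trivial quotient by topological simplicity. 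Once $\Isom(\Q\U) \to \Isom(E)$ is shown trivial, Lemma \ref{lem.trivial.continuous.homomorphism} finishes the proof for $\Isom(\U)$.
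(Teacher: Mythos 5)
Your reduction via Lemma \ref{lem.trivial.continuous.homomorphism} collapses at its central step: the claim that every continuous homomorphism $\Isom(\Q\U)\to\Isom(E)$ is trivial is false, already for $E$ a Hilbert space. Theorem \ref{thm.classification.unirep.Isom(UDelta)} does not say that $\Isom(\Q\U)$ has no non-trivial unitary representations; it says that every unitary representation is a direct sum of induced representations $\Ind_{G_{\{A\}}}^G(\sigma)$, and these are non-trivial whenever $A\neq\emptyset$ --- for instance the natural representation on $\ell^2(\Q\U)$ is a non-trivial, continuous, irreducible unitary representation of $\Isom(\Q\U)$. So the subgroup you propose to feed into Lemma \ref{lem.trivial.continuous.homomorphism} does not satisfy the hypothesis ``every continuous homomorphism $G\to K$ is trivial'', and no amount of property (T) or Ryll--Nardzewski fixed-point machinery can rescue this, since a unitary representation is already an isometric representation on a reflexive space. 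A secondary inaccuracy: $\Isom(\Q\U)$ does not sit as a \emph{closed} subgroup of $\Isom(\U)$; the image $H=\{g\in\Isom(\U)\colon g(\Q\U)=\Q\U\}$ is dense, and its subspace topology is strictly coarser than the permutation topology of $\Isom(\Q\U)$. (Density would in fact make the kernel argument easier, but this does not repair the main gap.)

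The paper's proof uses a completely different subgroup. By Uspenskij's theorem, $\Isom(\U)$ is a universal Polish group, so $\mathrm{Homeo}_+(\R)$ embeds into it as a topological subgroup; by a theorem of Megrelishvili, $\mathrm{Homeo}_+(\R)$ admits no non-trivial continuous isometric representation on any reflexive Banach space. Lemma \ref{lem.trivial.continuous.homomorphism}, together with the topological simplicity of $\Isom(\U)$ (Tent--Ziegler), then forces every continuous homomorphism $\Isom(\U)\to\Isom(E)$ to be trivial. The point is that one must exhibit a subgroup that genuinely has no reflexive isometric representations, and $\Isom(\Q\U)$ is not such a group. For the weaker statement that $\Isom(\U)$ has no non-trivial \emph{unitary} representation, the paper does give a separate argument through $\Isom(\Q\U)$, but of a different nature than yours: it exploits the fact that the restriction of a representation of $\Isom(\U)$ to $H$ must be continuous for the coarser metric topology, and shows that the non-trivial induced representations fail this continuity.
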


\begin{proof}
    Let $\mathrm{Homeo}_+(\R)$ be the Polish group of orientation-preserving homeomorphisms of $\R$. By Uspenskij's theorem \cite{Uspenskij}, $\mathrm{Homeo}_+(\R)$ embeds into $\Isom(\U)$ as a topological group. By a result of Megrelishvili \cite{Megrelishvili}, for every reflexive Banach space $E$, every continuous homomorphism $\mathrm{Homeo}_+(\R)\to\Isom(E)$ is trivial. Then the same holds for $\Isom(\U)$ by Lemma \ref{lem.trivial.continuous.homomorphism}. 
\end{proof}

This answer a question of Pestov \cite{Pestov}, which follows exactly the same lines of proof to show that the isometry group of the Urysohn sphere has no non-trivial representation by isometry on a reflexive Banach space.

Let $(X,\mu)$ be a standard probability space and let $\Aut(X,[\mu])$ be the group of all non-singular bijections of $X$, i.e.\ the group of Borel bijections of $X$ which preserves $\mu$-null sets. The weak topology on $\Aut(X,[\mu])$ is the initial topology with respect to the family of functions $T\in\Aut(X,[\mu])\mapsto \mu(T(A)\triangle T(B))\in\R$ and $T\in\Aut(X,[\mu])\mapsto d(\mu\circ T)/d\mu\in \L^1(X,\mu)$. This turns $\Aut(X,[\mu])$ into a Polish group. A non-singular near-action of a topological group $G$ is a continuous group homomorphism $G\mapsto \Aut(X,[\mu])$. Using the standard Koopman representation $\Aut(X,[\mu])\to \UU(\L^2(X,\mu))$ defined by $T\mapsto (d(\mu\circ T\inv)/d\mu)^{1/2} f\circ T\inv$, we obtain the following result. 

\begin{corollary}
    The isometry group of the Urysohn space $\U$ admits no non-trivial non-singular near-action. 
\end{corollary}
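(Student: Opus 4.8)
The plan is to feed the near-action into the Koopman machinery and then invoke Corollary \ref{cor.Isom(U)}. Let $\alpha\colon\Isom(\U)\to\Aut(X,[\mu])$ be a non-singular near-action, that is, a continuous homomorphism. Composing with the standard Koopman representation $\kappa\colon\Aut(X,[\mu])\to\UU(\L^2(X,\mu))$ yields a homomorphism $\kappa\circ\alpha\colon\Isom(\U)\to\UU(\L^2(X,\mu))$; here I would use the standard fact that $\kappa$ is continuous from the weak topology on $\Aut(X,[\mu])$ to the strong operator topology on $\UU(\L^2(X,\mu))$, so that $\kappa\circ\alpha$, being a composite of continuous maps, is continuous. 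Since $\L^2(X,\mu)$ is a Hilbert space it is in particular a reflexive Banach space, and every unitary operator is a linear isometry, so $\kappa\circ\alpha$ is a continuous representation of $\Isom(\U)$ by isometry on a reflexive Banach space. By Corollary \ref{cor.Isom(U)} it is therefore trivial: $\kappa(\alpha(g))=\mathrm{id}$ for every $g\in\Isom(\U)$.

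It then remains to descend from triviality of $\kappa\circ\alpha$ to triviality of $\alpha$, and this is where the only real content lies: I would show that the Koopman representation $\kappa$ is \emph{faithful}. Suppose $\kappa(T)=\mathrm{id}$ for some $T\in\Aut(X,[\mu])$, and write $g=d(\mu\circ T\inv)/d\mu$ for the Radon--Nikodym cocycle. Evaluating the identity $g^{1/2}\,(f\circ T\inv)=f$ on the constant function $f=\mathds{1}$ (which lies in $\L^2$ since $\mu$ is a probability measure) gives $g^{1/2}=1$, hence $g=1$ almost everywhere, so $T$ is in fact measure-preserving. Feeding this back, we obtain $f\circ T\inv=f$ for every $f\in\L^2(X,\mu)$; applying this to indicator functions $f=\mathds{1}_A$ and using $\mathds{1}_A\circ T\inv=\mathds{1}_{T(A)}$ shows that $T(A)=A$ modulo null sets for every measurable $A$, that is, $T=\mathrm{id}$ in $\Aut(X,[\mu])$.

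Combining the two steps finishes the argument: since $\kappa$ is injective and $\kappa(\alpha(g))=\mathrm{id}$ for all $g$, we conclude that $\alpha(g)=\mathrm{id}$ for all $g$, i.e.\ the near-action $\alpha$ is trivial. The proof presents no genuine obstacle; the one point deserving care is the faithfulness of the Koopman representation, which is exactly what converts the (a priori weaker) triviality of the associated unitary representation into triviality of the near-action itself.
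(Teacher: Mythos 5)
Your proof is correct and follows exactly the route the paper intends: compose the near-action with the Koopman representation $\Aut(X,[\mu])\to\UU(\L^2(X,\mu))$, apply Corollary \ref{cor.Isom(U)} to kill the resulting unitary representation, and use faithfulness of $\kappa$ to conclude. The paper leaves the continuity and faithfulness of $\kappa$ implicit; your write-up simply supplies those standard details.
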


We provide below an independent proof that $\Isom(\U)$ has no non-trivial unitary representation, where we use neither topological simplicity of $\Isom(\U)$, nor the result that $\mathrm{Homeo}_+(\R)$ has no non-trivial unitary representation.

\begin{proof}[Proof that $\Isom(\U)$ has no non-trivial unitary representation]
Let $G=\Isom(\U)$ and $\pi \colon G\to\UU(\HH)$ be a unitary representation. Let $H=\{g\in G\colon g(\Q\U)=\Q\U\}$. Notice that $H$ is a continuous homomorphic image of $\Isom(\Q\U)$, which is dense by Theorem 1 of \cite{CameronVershik}. By Theorem \ref{thmintro.unirep.Isom(UQ)}, $\pi_{|H}$ is a direct sum $\bigoplus_i\mathrm{Ind}_{H_{\{A_i\}}}^H(\sigma_i)$, where $A_i\subseteq\Q\U$ are finite subsets and $\sigma_i$ are irreducible representations of $H_{\{A_i\}}/H_A$.

\begin{fact}\label{fact.translation.UQ}
    There exists a sequence $(g_n)_{n\geq 0}$ of isometries of $\Q\U$ which converges to $\mathrm{id}_{\Q\U}$ for the topology of pointwise convergence (where $\Q\U$ is equipped with the topology arising from its metric), such that for every $n\geq 0$, the isometry $g_n$ fixes setwise no finite subset of $\Q\U$. 
\end{fact}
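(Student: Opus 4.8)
The assertion that $g_n$ fixes \emph{setwise} no nonempty finite subset of $\Q\U$ is equivalent to saying that $g_n$ has no finite orbit: a nonempty finite set is stabilized setwise by $g_n$ precisely when it is a union of finite $g_n$-orbits. Thus my plan is to produce, for each $n$, an isometry $g_n$ of $\Q\U$ all of whose orbits are infinite and which moves each of the first $n+1$ points of a fixed enumeration $\Q\U=\{a_0,a_1,\dots\}$ by less than $1/n$. Pointwise convergence $g_n\to\mathrm{id}_{\Q\U}$ (for the metric topology on the target) is then immediate, since for a fixed index $i$ one has $d(g_n(a_i),a_i)<1/n$ as soon as $n\geq i$. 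Each $g_n$ will be built as an increasing union of finite partial isometries of $\Q\U$ by a back-and-forth argument.

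Fix $n$, set $F=\{a_0,\dots,a_n\}$, and choose a rational $\varepsilon$ with $0<\varepsilon<\min\bigl(1/n,\ \min\{d(x,y):x,y\in F,\ x\neq y\}\bigr)$. The first step is to record a partial isometry with small displacement on $F$. For this I consider the finite $\Q_+$-metric space $F\times\{0,\varepsilon\}$ equipped with the $\ell^\infty$-product metric $d\bigl((x,s),(y,t)\bigr)=\max\bigl(d(x,y),|s-t|\bigr)$. Because $\varepsilon$ is strictly smaller than every nonzero distance in $F$, the map $x\mapsto(x,0)$ is an isometric copy of $F$, while $x\mapsto x^+\coloneqq(x,\varepsilon)$ is a disjoint isometric copy $F^+$ with $d(x,x^+)=\varepsilon$ for every $x\in F$. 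By the finite extension property of $\Q\U$ (realizing the points $x^+$ one at a time via the Urysohn property, Lemma \ref{Lem.Katetov}), I may assume this whole configuration sits inside $\Q\U$ over the given copy of $F$. The assignment $p_0\colon x\mapsto x^+$ is then a partial isometry of $\Q\U$ with $d(p_0(x),x)=\varepsilon<1/n$ for all $x\in F$.

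Starting from $p_0$, I extend to a bijective isometry $g_n=\bigcup_k p_k$ of $\Q\U$ by the standard back-and-forth, interleaving three kinds of steps along the enumeration: (\emph{forth}) put each $a_k$ into the domain; (\emph{back}) put each $a_k$ into the range; and (\emph{aperiodicity}) repeatedly extend the forward and backward ends of every chain. Each individual step amounts to adjoining one new point of $\Q\U$ whose distances to the current finite configuration are partly prescribed by the isometry condition and otherwise free; the prescribed distances form a partial Katetov function (the triangle inequalities hold because they are transported from genuine distances), which extends to a full Katetov function and is realized in $\Q\U$ by Lemma \ref{Lem.Katetov}. I always choose the new point distinct from all points seen so far, so no chain ever closes into a cycle, and the bookkeeping ensures that every point's forward and backward orbit is extended infinitely often. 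Hence $g_n$ is a bijective isometry of $\Q\U$ with every orbit bi-infinite and with $d(g_n(x),x)=\varepsilon<1/n$ for all $x\in F$, as required.

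The one genuinely delicate point—and the place I expect the main obstacle—is the compatibility between small displacement on $F$ and being a global isometry. Small displacement cannot be imposed at an arbitrary stage: once the partial isometry has moved some point close to $F$ to a distant location, the isometry condition forces a large displacement on $F$. This is exactly why the displacement on $F$ is pinned down at the very first step, when the only constraints come from $F$ itself and the explicit $\ell^\infty$-product construction makes the small-displacement copy $F^+$ available; from then on every remaining choice concerns a fresh point whose displacement is unconstrained, so the later extensions never interfere with the bound already secured on $F$.
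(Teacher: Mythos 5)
Your proof is correct, but it takes a genuinely different route from the paper's. The paper's proof is a two-line reduction to a theorem of Cameron and Vershik: $\Q\U$ admits an isometry $g$ such that $\langle g\rangle$ acts transitively and every element of $\langle g\rangle$ has constant displacement; one then picks $x_n\to x$ with $x_n\neq x$, chooses $k_n$ with $g^{k_n}(x_n)=x$, and sets $g_n=g^{k_n}$, so that $g_n$ has constant displacement $d(x_n,x)\to 0$ while every $\langle g_n\rangle$-orbit is infinite (a transitive $\Z$-action on an infinite set is free, so $k_n\neq 0$ forces all orbits of $g^{k_n}$ to be infinite). You instead build each $g_n$ from scratch: a small-displacement partial isometry $F\to F^+$ extracted from the $\ell^\infty$-product $F\times\{0,\varepsilon\}$, completed by a back-and-forth that avoids finite cycles by always realizing fresh points. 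Your argument is self-contained, using only the one-point extension property (Lemma \ref{Lem.Katetov}), whereas the paper's is shorter but imports a nontrivial external result; your key observation that the displacement on $F$ must be pinned down at the very first step, before any other constraints accumulate, is exactly right. Two minor points: (i) for $n=0$ the bound $1/n$ should read $1/(n+1)$; (ii) the assertion that the new point can always be chosen outside the finite configuration built so far deserves one sentence of justification --- a Kat\v{e}tov function with strictly positive values (positivity holds because the point being adjoined is not already in the relevant domain or range) admits arbitrarily many distinct realizations in $\Q\U$, obtained by amalgamating several copies of the one-point extension at small positive mutual distances. Note also that your separate ``aperiodicity'' steps are superfluous: since a finite orbit of $g_n$ would already appear as a cycle in some finite stage $p_k$, choosing fresh points at every step already rules out finite orbits.
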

\begin{proof}[Proof of Fact \ref{fact.translation.UQ}]
    By a result of Cameron and Vershik \cite[Thm.~6]{CameronVershik}, there exists an isometry $g$ of $\Q\U$ such that $\langle g\rangle$ is transitive on $\Q\U$ and for every $h\in\langle g\rangle$, the displacement $d(x,h(x))$ is constant for $x\in\Q\U$. Fix $x\in\Q\U$ and a sequence $x_n$ of elements in $\Q\U\setminus\{x\}$ which converges to $x$. By transitivity, we fix for every $n\geq 0$ an element $k_n\in\Z$ such that $g^{k_n}(x_n)=x$ and set $g_n\coloneqq g^{k_n}$. Then $(g_n)_{n\geq 0}$ is as wanted. 
\end{proof}
\noindent Assume that $A_i$ is non-empty for some $i$. Denote by $\KK_i$ the Hilbert space of the representation $\sigma_i$ and by $\HH_i$ the Hilbert space of the representation $\mathrm{Ind}_{H_{\{A_i\}}}^H(\sigma_i)$. Fix a unit vector $\xi\in\KK_i$ and define $f\in\HH_i$ by
\[f(g)=\left\{\begin{array}{cl}
    \sigma(g)\xi & \text{if }g\in H_{\{A_i\}},  \\
    0 & \text{else}. 
\end{array}\right.\]
Since for every $n\geq 0$, the isometry $g_n$ doesn't fix $A_i$ setwise, we obtain that 
\[\lVert \pi(g_n)f-f\rVert =\sqrt{2}.\]
This is in contradiction with the continuity of $\pi_{|H}$ (where $H$ is here equipped with the topology of pointwise convergence). Hence $A_i$ is empty for every $i$ and $\pi$ is trivial on $H$. By density of $H$ in $G$, the representation $\pi$ is trivial. 
\end{proof}

\begin{remark}
    In \cite[Cor.~5.5]{Tsankov}, Tsankov used a similar strategy to prove that $\mathrm{Homeo}_+(\R)$ admits no continuous unitary representation. For this, he used the classification of unitary representations of $\Aut(\Q,<)$ he obtained. This is a special case of a more general result about representations by isometries of $\mathrm{Homeo}_+(\R)$ on Banach spaces due to Megrelishvili \cite{Megrelishvili}. 
\end{remark}

\printbibliography

\Addresses
\end{document}